\documentclass[smallextended,envcountsect,]{svjour3}

\usepackage[T1]{fontenc}

\usepackage{epstopdf}
\usepackage[caption=false]{subfig}
\usepackage{graphicx}
\usepackage{float}
\usepackage[numbers,sort&compress]{natbib}
\bibpunct[, ]{[}{]}{,}{n}{,}{,}
\makeatletter
\def\NAT@def@citea{\def\@citea{\NAT@separator}}
\makeatother
\usepackage{hyperref}
\hypersetup{
colorlinks=true,
linkcolor=blue, 
citecolor=red, 
urlcolor=blue  } 
\usepackage{mathptmx}   
\usepackage{amsmath,amssymb}

\renewcommand {\theenumi} {\rm\roman{enumi}}

\usepackage{color}
\usepackage{comment}
\usepackage[colorinlistoftodos]{todonotes}
\usepackage{tcolorbox}
\usepackage{enumitem}
\usepackage[left=4cm, right=4cm, top=4cm, bottom=4cm]{geometry}

\newcommand{\al}{\alpha}
\newcommand{\be}{\beta}
\newcommand{\ga}{\gamma}

\newcommand{\de}{\delta}
\newcommand{\eps}{\varepsilon}
\newcommand{\bx}{\bar x}

\newcommand{\iv}{^{-1} }

\newcommand {\R} {\mathbb R}
\newcommand {\N} {\mathbb N}

\newcommand {\B} {\mathbb B}
\newcommand {\Sp} {\mathbb S}
\newcommand {\dom} {{\rm dom}\,}
\newcommand {\epi} {{\rm epi}\,}

\newcommand {\sd} {\partial}

\renewcommand{\iff}{\;$\Leftrightarrow$\;\,}
\newcommand{\folgt}{\;$\Rightarrow$\;\,}

\newcommand{\vertiii}[1]{\left\vert\kern-0.25ex\left\vert\kern-0.25ex\left\vert #1\right\vert\kern-0.25ex\right\vert\kern-0.25ex\right\vert}
\newcommand{\vertiiiBig}[1]{\Big\vert\kern-0.25ex\Big\vert\kern-0.25ex\Big\vert #1\Big\vert\kern-0.25ex\Big\vert\kern-0.25ex\Big\vert}

\def\es{\emptyset}
\def\lsc{lower semicontinuous}

\def\LHS{left-hand side}
\def\RHS{right-hand side}

\def\EVP{Ekeland variational principle}
\def\Fr{Fr\'echet}

\newcommand{\abs}[1]{\left\vert#1\right\vert}

\newcommand {\diam} {{\rm diam}\,}

\newcommand{\blue}[1]{\textcolor{blue}{#1}}

\newcommand{\red}[1]{\textcolor{red}{#1}}

\newcommand{\ang}[1]{\left\langle #1 \right\rangle}
\newcommand{\qdtx}[1]{\quad\mbox{#1}\quad}
\newcommand{\AND}{\quad\mbox{and}\quad}
\newcounter{mycount}

\newcommand{\AK}[1]{\todo[inline]{AK {#1}}}


\newcommand{\EE}[3]{{\rm{\bf #1$_{#3}$}($#2$)}}
\def\sigx{\sigma_{\bold{x}}}
\def\sigxz{\sigma_{\bold{x}_0}}
\def\sigxprime{\sigma_{\bold{x}'}}
\def\sigu{\sigma_{\bold{u}}}
\def\siga{\sigma_{\bold{a}}}
\def\sigaprime{\sigma_{\bold{a}'}}
\def\Esa{\EE{E}{\sigx;\siga}{}}
\def\GEsa{\EE{GE}{\sigx;\siga}{}}
\def\Ssa{\EE{S}{\sigx;\siga}{}}
\def\Ssaal{\EE{S}{\sigx;\siga}{\al}}
\def\ASsa{\EE{AS}{\sigx;\siga}{}}
\def\ASsaal{\EE{AS}{\sigx;\siga}{\al}}

\def\Epa{\EE{E}{\bx;\siga}{}}
\def\GEpa{\EE{GE}{\bx;\siga}{}}
\def\Spa{\EE{S}{\bx;\siga}{}}
\def\Spaal{\EE{S}{\bx;\siga}{\al}}
\def\ASpa{\EE{AS}{\bx;\siga}{}}
\def\ASpaal{\EE{AS}{\bx;\siga}{\al}}

\def\Es{\EE{E}{\sigx}{}}
\def\GEs{\EE{GE}{\sigx}{}}
\def\Ss{\EE{S}{\sigx}{}}
\def\Ssxal{\EE{S}{\sigx}{\al}}
\def\ASs{\EE{AS}{\sigx}{}}
\def\ASsxal{\EE{AS}{\sigx}{\al}}

\def\Ep{\EE{E}{\bx}{}}
\def\GEp{\EE{GE}{\bx}{}}
\def\Sp{\EE{S}{\bx}{}}
\def\Spxal{\EE{S}{\bx}{\al}}

\def\ASp{\EE{AS}{\bx}{}}
\def\ASpxal{\EE{AS}{\bx}{\al}}

\newcommand{\ES}[2]{{\rm{\bf #1$_{#2}$}}}
\def\Esy{\ES{E}{\{\hat y^k\}}}
\def\GEsy{\ES{GE}{\{\hat y^k\}}}
\def\Ssy{\EE{S}{\{\bold{x}^k\}}{\{\hat y^k\}}}

\def\ASsy{\ES{S}{\{\hat y^k\}}}

\def\Assyal{\EE{AS}{\{\bold{x}^k\}}{\{\hat y^k\},\al}}

\def\GSsal{{\rm({\bf GS$_\al$})}}
\def\GSs{{\rm({\bf GS})}}

\def\PDs{{\rm({\bf PD})}}

\newcommand{\NDC}[1]{\todo[inline,color=green!40]{NDC {#1}}}
\smartqed

\begin{document}

\title{Sequential Extremal Principle: Refinements and Applications}
\author{Nguyen Duy Cuong, Alexander Kruger, Nguyen Hieu Thao}

\institute{Nguyen Duy Cuong \at
	Faculty of Mathematics,
	College of Natural Sciences\\
	 Can Tho University, Can Tho City, Vietnam\\
	 ndcuong@ctu.edu.vn
	\and
	Alexander Kruger,  Corresponding author  \at
Analytical and Algebraic Methods in Optimization Research Group \\
Faculty of Mathematics and Statistics\\
	Ton Duc Thang University, Ho Chi Minh City, Vietnam\\
alexanderkruger@tdtu.edu.vn
\and
Nguyen Hieu Thao \at School of Science, Engineering and Technology\\
 RMIT University Vietnam, Ho Chi Minh City, Vietnam\\
 thao.nguyenhieu@rmit.edu.vn
}

\dedication{Dedicated to the memory of Prof. Franco Giannessi and his great contributions to optimisation and the journal}

\date{Received: date / Accepted: date}
\maketitle
\begin{abstract}
Sequential extremality and stationarity properties are discussed with the emphasis on those corresponding to fixed sequences of translations.
Exact quantitative characterisations of the properties are provided. We show, in particular, that the sequential (as well as conventional) extremality
and approximate stationarity properties possess certain stability, while the (non-approximate)
stationarity does not.
Dual necessary conditions for the sequential extremality and stationarity properties with fixed and non-fixed sequences of translations are established.
A version of the sequential extended extremal principle is formulated.
In the statements, we employ certain generalised separation conditions \GSs\ and \GSsal\ as well as a complementary primal-dual condition \PDs.
To illustrate the model, we prove dual optimality/stationarity conditions for a constrained minimisation problem in
which the minimal value is not necessarily attained.
\end{abstract}

\keywords{extremal principle \and separation \and stationarity \and transversality \and optimality conditions}
\subclass{49J52 \and  49J53 \and 49K40 \and 90C30 \and 90C46}


\section{Introduction}

We establish some refinements of the \emph{sequential extremal principle} introduced recently in \cite{CuoKru} as an extension of the conventional {extremal principle} originally proposed in \cite{KruMor80} and further developed in \cite{MorSha96,Mor06.1} and many other publications; see, e.g., \cite{Mor00,FabMor02,MorTreZhu03, Kru04,ZheNg11,BuiKru19,CuoKru25}.
Being natural generalisations of the classical \emph{separation theorem},
extremal principles (generalised separation results) are widely used in variational analysis and naturally translate into necessary
optimality conditions (multiplier rules) and various subdifferential, normal cone and coderivative calculus results in nonconvex settings; in addition to the above references, we refer the reader to \cite{Kru85.1_,Kru98,Kru03,BorZhu05,Mor06.2,MorNam22}.

The conventional extremal principle gives dual necessary conditions, which can be interpreted as generalised separation, for the extremal behaviour of a finite collection of sets near a given point in their intersection (often referred to as \emph{extremal point}).
It was shown in \cite{Kru98,Kru02} (see also \cite{Kru03,Kru04,Kru05,Kru06,Kru09,BuiKru18}) that the generalised separation conditions can be true under weaker than extremality assumptions which can be interpreted as some kinds of \emph{stationarity}.

The next definition introduces the mentioned properties.
Here and throughout the paper, we consider a collection of $n>1$ nonempty subsets $\Omega_1,\ldots,\Omega_n$ of a normed vector space $X$ and write $\{\Omega_1,\ldots,\Omega_n\}$ to denote the collection of these sets as a single object.
Symbols $\B_X$
and $B_\rho(\bx)$ denote the open unit ball and open ball with centre $\bx$ and radius $\rho>0$, respectively, while $\overline\B_X$ denotes the closed unit ball.

\begin{definition}
[Extremality, stationarity and approximate stationarity]
\label{D1.1}
Let $\bx\in\bigcap_{i=1}^n\Omega_i$.
The collection $\{\Omega_1,\ldots,\Omega_n\}$ is \begin{enumerate}
\item
\label{D1.1.1}
{extremal} at $\bx$ if there is a $\rho>0$ such that,
for any $\varepsilon>0$,
there exist $a_1,\ldots,a_n\in\eps\B_{X}$
 such that
$\bigcap_{i=1}^n(\Omega_i-a_i)\cap B_\rho(\bx)=\emptyset$;
\item
\label{D1.1.2}
{stationary} at $\bx$ if,
for any $\eps>0$,
there exist a $\rho\in(0,\eps)$
and $a_1,\ldots,a_n\in\eps\rho\B_{X}$ such that
$\bigcap_{i=1}^n(\Omega_i-a_i)\cap B_\rho(\bx)=\emptyset$;
\item
\label{D1.1.3}
{approximately stationary} at $\bx$ if,
for any $\eps>0$,
there exist a $\rho\in(0,\eps)$, $x_i\in\Omega_i\cap B_\eps(\bx)$ and $a_i\in\eps\rho\B_{X}$ $(i=1,\ldots,n)$ such that
$\bigcap_{i=1}^n(\Omega_i-x_i-a_i)\cap(\rho\B_X)
=\emptyset$.
\end{enumerate}
\end{definition}

The relationships between the properties in Definition~\ref{D1.1} are straightforward:
\eqref{D1.1.1} \folgt \eqref{D1.1.2} \folgt \eqref{D1.1.3}.
In Asplund spaces, the approximate stationarity, the weakest of the three properties, is equivalent to
the dual necessary conditions in the conventional extremal principle, i.e., the \emph{extended extremal principle} holds; cf.
\cite[Theorem~3.7]{Kru03}.

\begin{theorem}
[Extended extremal principle]
\label{T1.2}
Let $X$ be Asplund, $\Omega_1,\ldots,\Omega_n$ 
be closed, and $\bx\in\bigcap_{i=1}^n\Omega_i$.
The collection $\{\Omega_1,\ldots,\Omega_n\}$ is  approximately stationary at $\bx$ if and only if,
for any $\varepsilon>0$, there exist $x_i\in\Omega_i\cap B_\eps(\bx)$ and $x_i^*\in N^F_{\Omega_i}(x_i)$ $(i=1,\ldots,n)$ such that
$\|\sum_{i=1}^nx_i^*\|<\eps$ and
$\sum_{i=1}^n\|x_i^*\|=1$.
\end{theorem}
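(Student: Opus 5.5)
We prove the two implications separately, working with the sum-norm on $X^n$ and its dual max-norm where convenient.

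\emph{Necessity.} Let the collection be approximately stationary at $\bx$ and fix $\eps>0$. Choose a small $\eps'>0$, to be tuned at the end, and take $\rho\in(0,\eps')$, $x_i\in\Omega_i\cap B_{\eps'}(\bx)$ and $a_i\in\eps'\rho\B_X$ from Definition~\ref{D1.1}\eqref{D1.1.3}. On $X^n$ consider
\[
f(u_1,\ldots,u_n):=\max_{1\le i,j\le n}\|(u_i-x_i-a_i)-(u_j-x_j-a_j)\|\quad\text{(or } \max_{i}\|u_i-x_i-a_i-\tfrac1n\textstyle\sum_j(u_j-x_j-a_j)\|\text{)},
\]
restricted to $\prod_i\Omega_i$ intersected with a neighbourhood of $(x_1,\ldots,x_n)$. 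A cleaner choice is to add an extra variable $u_0\in X$ and set $f(u_0,\ldots,u_n):=\max_i\|u_i-x_i-a_i-u_0\|$ on $\prod_i\Omega_i\times X$. The emptiness condition $\bigcap_i(\Omega_i-x_i-a_i)\cap\rho\B_X=\emptyset$ ensures $f>0$ whenever $\|u_0\|<\rho$. At the base point $(0,x_1,\ldots,x_n)$ we have $f\le\max_i\|a_i\|<\eps'\rho$. We then apply the Ekeland variational principle with parameter $\eps'\rho$ and radius of the order $\rho/2$. This produces a point $(w_0,w_1,\ldots,w_n)$ with $\|w_0\|<\rho/2$ and $\|w_i-x_i\|<\rho/2$ that minimises $f+\eps'\,\|\cdot-(w_0,\ldots,w_n)\|$ on $\prod_i\Omega_i\times X$. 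In particular $f(w)>0$.

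Next we apply the fuzzy sum rule for Fréchet subdifferentials in Asplund spaces to $f$, the indicator functions of the $\Omega_i$, and the Lipschitz perturbation. Since $f(w)>0$, the function $f$ is a maximum of norms evaluated at nonzero arguments near $w$. Its Fréchet subgradients at nearby points $(v_0,\ldots,v_n)$ therefore have the form $(-\sum_i y_i^*,\,y_1^*,\ldots,y_n^*)$ with $\sum_i\|y_i^*\|=1$. This normalisation is the key point, and it comes from the convex subdifferential of $\max_i\|z_i\|$ at a nonzero argument. The sum rule yields points $x_i'\in\Omega_i$ near $w_i$, normals $x_i^*\in N^F_{\Omega_i}(x_i')$ with $\|x_i^*+y_i^*\|$ small, and $\|\sum_iy_i^*\|$ small from the $u_0$-component (since $u_0$ is unconstrained). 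After renormalising so that $\sum_i\|x_i^*\|=1$, we obtain $\|\sum_ix_i^*\|<\eps$ and $\|x_i'-\bx\|<\eps'+\rho+\text{small}<\eps$ once $\eps'$ is small enough. The main obstacle is the bookkeeping in the fuzzy sum rule. It must be checked that the points at which the subdifferential of $f$ is evaluated still have $f>0$, so that the normalisation $\sum\|y_i^*\|=1$ holds. This is secured by taking the sum-rule neighbourhood smaller than the current value $f(w)$.

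\emph{Sufficiency.} Assume the dual condition. Given $\eps>0$, take $x_i\in\Omega_i\cap B_{\eps}(\bx)$ and $x_i^*\in N^F_{\Omega_i}(x_i)$ with $\|\sum x_i^*\|<\eps^2/4$ (a smaller tolerance) and $\sum\|x_i^*\|=1$. By the definition of Fréchet normals, there is a $\rho\in(0,\eps)$ such that $\langle x_i^*,u-x_i\rangle\le\delta\|u-x_i\|$ for all $u\in\Omega_i\cap B_{2\rho}(x_i)$, where $\delta$ is small. Choose $z_i\in X$ with $\|z_i\|=1$ and $\langle x_i^*,z_i\rangle$ nearly equal to $\|x_i^*\|$, and set $a_i:=-t\rho z_i$ with $t$ of the order of $\eps/2$, so that $a_i\in\eps\rho\B_X$. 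Suppose there were a point $x\in\rho\B_X$ with $x+x_i+a_i\in\Omega_i$ for all $i$. Summing the Fréchet inequalities for $u=x+x_i+a_i$ gives
\[
\sum_i\langle x_i^*,x\rangle+\sum_i\langle x_i^*,a_i\rangle\le\delta\sum_i\|x+a_i\|.
\]
The left-hand side is at most $\|\sum x_i^*\|\rho-t\rho(1-\text{small})$. The right-hand side is $O(\delta\rho)$. We then check that these bounds are incompatible: the $-t\rho$ term dominates once $\|\sum_i x_i^*\|$ and $\delta$ are small relative to $t$. This contradiction shows $\bigcap_i(\Omega_i-x_i-a_i)\cap\rho\B_X=\emptyset$, which is approximate stationarity. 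Matching the constants here is routine.
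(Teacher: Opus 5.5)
\textbf{Necessity.} This half of your argument is correct. It is the standard Ekeland-plus-fuzzy-sum-rule proof, which is the mechanism the paper packages into Theorem~\ref{T1.5}\eqref{T1.5-2} and then feeds into Theorem~\ref{T3.1} and Corollary~\ref{C3.2}. The statement itself is recovered in the paper as the case $x^k_{0i}:=\bar x$ of Theorem~\ref{T3.9}. Only cosmetic fixes are needed:
\begin{itemize}
\item with Ekeland radius $\rho/2$ the perturbation term is $2\varepsilon'\|\cdot-w\|$, not $\varepsilon'\|\cdot-w\|$;
\item the normalisation $\sum_i\|y_i^*\|=1$ comes from the maximum norm on $X^n$ with its dual sum norm, not the other way round as your opening sentence says.
\end{itemize}

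\textbf{Sufficiency.} This half fails as written, because of a sign error that you then carry into the ``contradiction''.
\begin{itemize}
\item With $a_i:=-t\rho z_i$ you get $\sum_i\langle x_i^*,a_i\rangle\approx-t\rho$. Your estimate then says that the left-hand side of the summed Fr\'echet inequality is at most a \emph{negative} number.
\item That inequality only asserts that the left-hand side is at most $\delta\sum_i\|x+a_i\|\ge0$. An upper bound on the left-hand side can never contradict it.
\item The construction itself is wrong, not just the bookkeeping. Translating by $a_i$ with $\langle x_i^*,a_i\rangle<0$ enlarges the sets in the direction of their normals instead of pulling them apart.
\end{itemize}
Example~\ref{E2.5} shows this concretely. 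Take $x_1^*=(0,-\frac12)$, $x_2^*=(0,\frac12)$, $z_1=(0,-1)$, $z_2=(0,1)$. Your choice gives $\Omega_1-a_1=\{(u,v)\in\R^2\mid v\ge-t\rho\}$ and $\Omega_2-a_2=\{(u,v)\in\R^2\mid v\le t\rho\}$, and their intersection contains $0$.

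\textbf{The fix.} Choose $z_i$ with $\langle x_i^*,z_i\rangle\ge(1-\theta)\|x_i^*\|$ and set $a_i:=t\rho z_i$, with, say, $t:=\min\{\varepsilon,1\}/2$. Then $\sum_i\langle x_i^*,a_i\rangle\ge t\rho(1-\theta)$, and you bound the left-hand side from \emph{below}:
\[
\langle\textstyle\sum_ix_i^*,x\rangle+\sum_i\langle x_i^*,a_i\rangle\ge(t(1-\theta)-\|\sum_ix_i^*\|)\rho .
\]
This exceeds $\delta\sum_i\|x+a_i\|\le n\delta(1+t)\rho$ as soon as $\|\sum_ix_i^*\|<t(1-\theta)$ and $\delta$ is small enough. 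Note that $\delta$ must be fixed before $\rho$ is chosen from the definition of Fr\'echet normals. This corrected argument is exactly the paper's Lemma~\ref{L3.5}, where the translation is chosen with $\sum_i\langle x_i^*,a_i\rangle>(\alpha'+\xi)\rho$, followed by Proposition~\ref{P3.6}.
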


Recall that a Banach space is {Asplund} if every continuous convex function on an open convex set is Fr\'echet differentiable on a dense subset, or equivalently, if the dual of each its separable subspace is separable.
We refer the reader to \cite{Phe93,Mor06.1,BorZhu05} for discussions about and characterisations of Asplund spaces.
Theorem~\ref{T1.2} employs \emph{\Fr\ normal cones}.
The dual conditions are formulated in a \emph{fuzzy} form and can be interpreted as \emph{generalised separation}; cf. \cite{BuiKru19}.
The conventional extremal principle \cite{KruMor80,MorSha96,Mor06.1}, claiming that these generalised separation conditions are implied by $\{\Omega_1,\ldots,\Omega_n\}$ being extremal at $\bx$, is an immediate consequence of Theorem~\ref{T1.2}.

The extremality and stationarity concepts in Definition~\ref{D1.1} and their characterisations in the conventional and extended extremal principle as well as other typical generalisations and extensions are attached to a fixed point in the intersection of the sets (modeling an optimal or stationary point).
As observed in \cite{CuoKru}, despite its recognized versatility and numerous applications, this model does not cover an important class of optimisation problems and calculus relations `at infinity'
\cite{NguPha24,KimNguPha25,NguPha26} involving unbounded sets, where we can only have minimising (or stationary in some sense) sequences; see \cite[Examples~1.5, 2.2, 4.4 and 4.7--4.9]{CuoKru}.

It was suggested in \cite{CuoKru} to consider sequential versions of the properties in Definition~\ref{D1.1} replacing the point $\bx\in\bigcap_{i=1}^n\Omega_i$ (which may not exist) by appropriate sequences.
Below is a slightly simplified version of \cite[Definition~2.1]{CuoKru}.

\begin{definition}
[Sequential extremality, stationarity and approximate stationarity]
\label{D1.3}
The collection $\{\Omega_1,\ldots,\Omega_n\}$
is
\begin{enumerate}
\item
\label{D1.3.1}
extremal at sequences
$\{x_i^k\}\subset\Omega_i$ $(i=1,\ldots,n)$ if \begin{gather}
\label{D1.3-1}
\diam\{x_1^k,\ldots,x_n^k\}:=
\max_{1\le i,j\le n}\|x_i-x_j\|\to0 \qdtx{as}k\to+\infty,
\end{gather}
and there is a $\rho>0$ such that,
for any $\varepsilon>0$, there exist an integer $k>\eps\iv$ and points
$a_1,\ldots,a_n\in\eps\B_{X}$ such that
$\bigcap_{i=1}^n(\Omega_i-x_i^k-a_i)\cap(\rho\B_X)=\emptyset$;

\item
\label{D1.3.2}
stationary at sequences
$\{x_i^k\}\subset\Omega_i$ $(i=1,\ldots,n)$ if condition \eqref{D1.3-1} is satisfied, and, for any $\varepsilon>0$, there exist an integer $k>\eps\iv$, a $\rho\in(0,\varepsilon)$, and points
$a_1,\ldots,a_n\in\eps\rho\B_{X}$ such that
$\bigcap_{i=1}^n(\Omega_i-x_i^k-a_i)\cap(\rho\B_X)=\emptyset$;
\item
\label{D1.3.3}
approximately stationary at a sequence $\{x^k\}\subset X$ if, for any $\varepsilon>0$, there exist an integer $k>\eps\iv$, a $\rho\in(0,\varepsilon)$,
and points $x_i\in\Omega_i\cap B_\eps(x^k)$ and $a_i\in\eps\rho\B_{X}$ $(i=1,\ldots,n)$ such that $\bigcap_{i=1}^n(\Omega_i-x_i-a_i)\cap(\rho\B_X)
=\emptyset$.
\end{enumerate}
\end{definition}

Definition~\ref{D1.1} is a particular case of Definition~\ref{D1.3} with $x_i^k:=\bx$ for all $i=1,\ldots,n$ and $k\in\N$ in parts \eqref{D1.3.1} and \eqref{D1.3.2}, and $x^k:=\bx$ for all $k\in\N$ in part \eqref{D1.3.3}.
Here, the sets $\Omega_1,\ldots,\Omega_n$ are not supposed to have a common point.
Instead, condition \eqref{D1.3-1} in parts \eqref{D1.3.1} and \eqref{D1.3.2}, and conditions $x_i\in\Omega_i\cap B_\eps(x^k)$ $(i=1,\ldots,n)$ in part \eqref{D1.3.3} of Definition~\ref{D1.3} ensure
that the (generalised) distance between the sets is $0$.
The sequences in the definition may be unbounded.
Similarly to Definition~\ref{D1.1},
it holds \eqref{D1.3.1} $\Rightarrow$ \eqref{D1.3.2} in Definition~\ref{D1.3} and, if $\{\Omega_1,\ldots,\Omega_n\}$ is stationary at sequences
$\{x_i^k\}\subset\Omega_i$ $(i=1,\ldots,n)$, then, for each $i\in\{1,\ldots,n\}$, it is approximately stationary at the sequence $\{x_{i}^k\}$.

Employing the generalised separation result from \cite[Theorem~3.1]{CuoKru25} (proved using standard techniques based on the Ekeland variational principle and subdifferential sum rules), dual necessary conditions for the sequential approximate stationarity (and for the more advanced sequential approximate $\al$-stationarity) were established in \cite{CuoKru}.
These conditions preserve the generalised separation pattern typical of the conventional extremal principle.
In the Asplund space setting, they lead to the following statement generalising and extending Theorem~\ref{T1.2}; cf. \cite[Corollary~3.7]{CuoKru}.

\begin{theorem}
[Sequential extended extremal principle]
\label{T1.4}
Let $X$ be Asplund, $\Omega_1,\ldots,\Omega_n$ 
be closed, and $\{x^k\}\subset X$.
The collection
$\{\Omega_1,\ldots,\Omega_n\}$
is approximately stationary at
{$\{x^k\}$} if and only if,
for any $\varepsilon>0$, there exist an integer $k>\eps\iv$, and points
$x_i\in\Omega_i\cap B_\eps(x^k)$ and $x_i^*\in N^F_{\Omega}(x_i)$ $(i=1,\ldots,n)$ such that
$\|\sum_{i=1}^nx_i^*\|<\eps$ and
$\sum_{i=1}^n\|x_i^*\|=1$.
\end{theorem}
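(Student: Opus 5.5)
The plan is to reduce both implications of Theorem~\ref{T1.4} to their pointwise counterparts. The approximate stationarity at $\{x^k\}$ in Definition~\ref{D1.3}\eqref{D1.3.3} is built, for each $\eps$, from one index $k$ and one localised configuration $x_i\in\Omega_i\cap B_\eps(x^k)$. The dual condition has exactly the same quantifier pattern, so it suffices to relate a single configuration to a single set of Fréchet normals, with a controlled loss of $\eps$. I read $N^F_\Omega(x_i)$ as $N^F_{\Omega_i}(x_i)$.

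\emph{Necessity.} Fix $\eps>0$ and set $\eps'\in(0,\eps/3)$, with $\eps'\le1/3$ (taken small in terms of $n$ if needed). Apply the definition with $\eps'$. This gives $k>\eps'^{-1}>\eps^{-1}$, a $\rho\in(0,\eps')$, points $x_i'\in\Omega_i\cap B_{\eps'}(x^k)$ and $a_i\in\eps'\rho\B_X$ such that $\bigcap_i(\Omega_i-x_i'-a_i)\cap\rho\B_X=\emptyset$.

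Consider the function on $X^n$
\[
\varphi(u_1,\dots,u_n):=\max_{1\le i,j\le n}\|(u_i-x_i'-a_i)-(u_j-x_j'-a_j)\|,
\]
with $u_i$ restricted to $\Omega_i$, and add the indicator functions of the $\Omega_i$. I would rather use the norm of $(u_i-x_i'-a_i-w)_i$ minimised over $w$, as in the standard proof of Theorem~\ref{T1.2}. The emptiness condition makes this function positive on $\prod_i(\Omega_i\cap(x_i'+\rho\B_X))$, while its value at $(x_1',\dots,x_n')$ is at most $2\eps'\rho$.

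Apply the Ekeland variational principle with parameter $\eps'\rho$ and radius of order $\rho/2$. This produces a point $(\hat x_i)$ with $\|\hat x_i-x_i'\|<\rho/2$, which therefore lies in $B_\eps(x^k)$. Then use the fuzzy Fréchet sum rule in the Asplund space $X^n$. The subdifferential of the max-of-norms function at a point where it is positive has its components summing to $0$, and their norms total $1$ after normalisation. This yields $x_i\in\Omega_i$ near $\hat x_i$ and $x_i^*\in N^F_{\Omega_i}(x_i)$ with $\|\sum x_i^*\|<\eps$ and $\sum\|x_i^*\|=1$.

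Equivalently, one can shortcut the whole necessity argument by citing the pointwise machinery: apply \cite[Theorem~3.1]{CuoKru25} to the translated sets $\Omega_i-x_i'-a_i$ near $0$.

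\emph{Sufficiency.} Given normals $x_i^*\in N^F_{\Omega_i}(x_i)$ with $\|\sum x_i^*\|<\eps'$ and $\sum\|x_i^*\|=1$, I would reproduce the converse argument of Theorem~\ref{T1.2} from \cite{Kru03}. By the definition of Fréchet normals, there is a $\rho\in(0,\eps)$ such that $\langle x_i^*,u-x_i\rangle\le\eps'\|u-x_i\|$ for all $u\in\Omega_i\cap B_{2\rho}(x_i)$.

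Pick $z\in X$ with $\|z\|=1$ and $\sum_i\langle x_i^*,z\rangle$ nearly maximal relative to the weights. Choose translations $a_i:=-t_i\rho z$, or more precisely vectors aligned so that $\langle x_i^*,a_i\rangle\approx-\delta\rho\|x_i^*\|$ with $\delta$ of order $\eps$. Then a common point $w\in\bigcap_i(\Omega_i-x_i-a_i)\cap\rho\B_X$ would give
\[
\sum_i\langle x_i^*,w+a_i\rangle\le\eps'\sum_i\|w+a_i\|.
\]
At the same time, the left-hand side equals $\langle\sum_i x_i^*,w\rangle+\sum_i\langle x_i^*,a_i\rangle\le\eps'\rho-\delta\rho$. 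Comparing the two bounds, with the constants tuned so that $\eps'\ll\delta<\eps$, gives a contradiction.

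The main obstacle is this sufficiency step. The trouble is producing single vectors $a_i$ with $\langle x_i^*,a_i\rangle$ close to $-\|a_i\|\|x_i^*\|$ simultaneously for all $i$. This is handled, as in \cite{Kru03}, by choosing each $a_i$ separately, using the supremum defining $\|x_i^*\|$, since the $a_i$ are allowed to differ. Bookkeeping the constants so that everything lands in $\eps\rho\B_X$ and the inequalities are strict is then routine.
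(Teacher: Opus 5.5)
Your necessity argument is sound and uses the same machinery as the paper: Ekeland plus the fuzzy Fr\'echet sum rule, which the paper packages as Theorem~\ref{T1.5} and feeds into Theorem~\ref{T3.1} and Corollary~\ref{C3.2}. Working at a single index $k$ directly with Definition~\ref{D1.3}\eqref{D1.3.3} even spares you the diagonal argument of Corollary~\ref{C3.2}. One small correction: emptiness does not make $\varphi$ positive on all of $\prod_i(\Omega_i\cap(x_i'+\rho\B_X))$, since a common point $w=u_i-x_i'-a_i$ there only satisfies $\|w\|<(1+\eps')\rho$. At the Ekeland point, within $\rho/2$ of $(x_1',\ldots,x_n')$, you do get $\|w\|<(1/2+\eps')\rho<\rho$, and that is all you use.

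The sufficiency step, however, fails as written, because the translations have the wrong sign. If $\ang{x_i^*,a_i}\approx-\delta\rho\|x_i^*\|$, then $\sum_i\ang{x_i^*,w+a_i}\le\eps'\sum_i\|w+a_i\|$ and $\sum_i\ang{x_i^*,w+a_i}\le(\eps'-\delta)\rho$ are two upper bounds on the same number, so comparing them gives no contradiction. Such translations can genuinely fail, since they push the sets into each other. In Example~\ref{E2.5}, take $x_1^*=(0,-1/2)\in N_{\Omega_1}(\bx)$ and $x_2^*=(0,1/2)\in N_{\Omega_2}(\bx)$. Your requirement forces $a_{12}>0>a_{22}$, whence $0\in(\Omega_1-a_1)\cap(\Omega_2-a_2)$; compare the choice $a_1^k=(0,1/k)$, $a_2^k=(0,-1/k)$ there, which gives $\zeta=0$.

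You need instead $\|a_i\|=\delta\rho$ and $\ang{x_i^*,a_i}\ge(1-\theta)\delta\rho\|x_i^*\|$ for each $i$, with $\delta<\min\{\eps,1\}$. A common point $w\in\rho\B_X$ then yields the lower bound $\sum_i\ang{x_i^*,w+a_i}>((1-\theta)\delta-\eps')\rho$, using $\|\sum_ix_i^*\|<\eps'$ and $\sum_i\|x_i^*\|=1$. Fr\'echet normality at $u_i:=x_i+a_i+w\in\Omega_i\cap B_{2\rho}(x_i)$ gives the upper bound $\eps'\sum_i\|w+a_i\|<n\eps'(1+\delta)\rho$. These are incompatible once $(1-\theta)\delta\ge\eps'(1+n(1+\delta))$.

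With this correction your argument is exactly Lemma~\ref{L3.5}, applied as in Proposition~\ref{P3.6}; there the paper picks $(a_1,\ldots,a_n)$ with $\sum_i\ang{x_i^*,a_i}>(\al-\xi)\rho>0$. The componentwise alignment you flag as the main obstacle is automatic in that formulation. The dual of the maximum norm on $X^n$ is the sum norm, so $\|(x_1^*,\ldots,x_n^*)\|=\sum_i\|x_i^*\|=1$.
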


\begin{remark}
Compared to Theorem~\ref{T1.4}, the statement of
\cite[Corollary~3.7]{CuoKru} contains an additional dual characterisation of sequential approximate stationarity of $\{\Omega_1,\ldots,\Omega_n\}$ at {$\{x^k\}$}:
\emph{for any $\varepsilon>0$ and $\tau\in(0,1)$, there exist an integer $k>\eps\iv$, and points $x_0\in\eps\B_X$,
$x_i,x'_i\in\Omega_i\cap B_\eps(x^k)$, $a_i\in\eps\B_{X}$, $x_i^*\in N^F_{\Omega}(x_i)$ $(i=1,\ldots,n)$ such that
$\|\sum_{i=1}^nx_i^*\|<\eps$,
$\sum_{i=1}^n\|x_i^*\|=1$, and}
\begin{gather*}
\sum_{i=1}^n\ang{x_i^*,x_0+a_i+x'_i-x_i}
>\tau\max_{1\le i\le n}\|x_0+a_i+x'_i-x_i\|.
\end{gather*}
The origins of this type of conditions can be traced back to the \emph{unified separation theorem} by Zheng \& Ng \cite{ZheNg11}.

The above characterisation obviously implies the one in Theorem~\ref{T1.4}.
At the same time, it is a straightforward consequence of the one in Theorem~\ref{T1.4}.
Indeed, let $\eps>0$, $k\in\N$, $x_i\in\Omega_i\cap B_\varepsilon(x^k)$, $x_i^*\in N^F_{\Omega_i}(x_i)$ $(i=1,\ldots,n)$, $\|\sum_{i=1}^nx_i^*\|<\eps$ and
$\sum_{i=1}^n\|x_i^*\|=1$.
Let $\tau\in(0,1)$ and $\tau'\in(\tau,1)$.
For each $i=1,\ldots,n$, there exists an $a'_i\in X$ such that $\|a'_i\|=1$ and $\langle x_i^*,a'_i\rangle\ge \tau'\|x_i^*\|$.
Choose an $\eps'\in(0,\varepsilon)$ and define
$a_i:=\eps' a'_i$.
Then $\|a_i\|=\eps'<\eps$ $(i=1,\ldots,n)$, and
\begin{align*}
\sum_{i=1}^n \langle x_i^*,a_i\rangle
\ge\eps'\tau' \sum_{i=1}^n \|x_i^*\|
=\eps'\tau'
>\tau \max_{1\le i\le n}\|a_i\|.
\end{align*}
Thus, the characterisation formulated above holds with $x_0:=0$ and $x_i':=x_i$ $(i=1,\ldots,n)$.
\end{remark}

The sequential model developed in \cite{CuoKru} extends the applicability of the approach based on
extremal principles.
In particular, it leads to natural dual optimality conditions (multiplier rules) for minimising sequences; see \cite[Section~5]{CuoKru}.
This sequential model is more flexible than the model proposed in \cite{NguPha24,KimNguPha25,NguPha26} and does not require introducing an artificial infinity point.

In this paper, we extend and improve Definition~\ref{D1.3} along several lines.
\begin{enumerate}
\item
The model adopted in the current paper does not assume condition \eqref{D1.3-1} making an essential part of the properties in parts \eqref{D1.3.1} and \eqref{D1.3.2} of the definition, and implicitly present also in part \eqref{D1.3.3}.
This natural condition is satisfied in many settings involving unbounded sequences modeling optimisation problems `at infinity' as discussed in
\cite{CuoKru}.
At the same time, it is superfluous when proving dual necessary conditions.
In the conventional `at-a-point' setting, this fact was observed in \cite[Section~4.2]{BuiKru18}.
\item
The properties in Definition~\ref{D1.3} are defined for the given sequences
$\{x_i^k\}\subset\Omega_i$ $(i=1,\ldots,n)$ in parts \eqref{D1.3.1} and \eqref{D1.3.2}, and $\{x^k\}\subset X$ in part \eqref{D1.3.3}.
However, a closer look at the definition shows that
the properties are actually determined by some subsequences of the given sequences.
The definitions adopted in the current paper
are more specific in identifying the sequences relevant for the properties.
\item
The last property in Definition~\ref{D1.3} is defined using a slightly different language compared to the first two properties.
This complicates the comparison of the definitions
and results.
One can see this already when reading the above items.
An advantage of the model adopted in the current paper is the uniformity of the presentation of all three groups of the studied properties: extremality, stationarity and approximate stationarity, simplifying the comparison of the definitions and results.
\item
All three properties in Definition~\ref{D1.3} (as well as Definition~\ref{D1.1}) implicitly assume the existence of certain sequences of translations
converging to zero.
These sequences play an important role in the definitions.
In the current paper, we make them explicit and
study the properties corresponding to fixed sequences of translations.
One can go further and consider more general than translations deformations of the given sets; cf. \cite{CuoKruTha24,CuoKruTha25}.
\item
Unlike the descriptive Definition~\ref{D1.3}, the corresponding properties are defined in this paper using exact quantitative estimates of the properties; see Definition~\ref{D2.01}.
\end{enumerate}

These modifications lead to a clearer model resulting in improved sequential dual necessary conditions with some additional restrictions on the dual variables and also shed some new light on
extremality/stationarity settings, especially on the role of the vectors/sequences of translations.

The new definitions are introduced and discussed in Section~\ref{S2}.
It consists of two subsections.
In the first one, we discuss the properties corresponding to fixed sequences of translations $\{a^k_i\}\subset X$ $(i=1,\ldots,n)$.
This line of research is new even in the conventional `at-a-point' setting.
The exact quantitative characterisations of the respective properties are provided.
We show, in particular, that the sequential (as well as conventional) extremality and approximate stationarity properties possess certain stability, while the (non-approximate) stationarity does not.
The established relations are then used in the second subsection when proving certain facts for the properties defined \emph{for some} sequences $\{a^k_i\}$ $(i=1,\ldots,n)$.
The conventional setting when the sequences
$\{x_i^k\}\subset\Omega_i$ $(i=1,\ldots,n)$ reduce to a single point in the intersection of the sets is given a special attention.

In Section~\ref{S3}, we establish dual necessary
conditions for the sequential extremality and stationarity properties discussed in Section~\ref{S2} and a more elaborate version of the sequential extended extremal principle.
In the statements, we employ certain generalised separation conditions \GSs\ and \GSsal\ as well as a complementary primal-dual condition \PDs\ providing additional restrictions on the associated dual vectors.
When proving the dual necessary conditions, we employ a simplified version of \cite[Corollary~3.2]{CuoKru25} instead of the general separation result \cite[Theorem~3.1]{CuoKru25} used in \cite{CuoKru}, producing a straightforward proof.

To illustrate the model, we consider in Section~\ref{S4} a constrained minimisation problem in which the minimal value is not necessarily attained and deduce stronger than in \cite{CuoKru} sequential optimality and stationarity conditions for more general
types of stationary sequences.
Some concluding remarks are collected in Section~\ref{conclusions}.

\subsubsection*{Preliminaries}
\label{Pre}

Our basic notation is standard; see, e.g., \cite{BorZhu05,Mor06.1}.
Throughout the paper, $X$ is a normed space (typically Banach or Asplund).
Its topological dual is denoted by $X^*$, while $\langle\cdot,\cdot\rangle$ denotes the bilinear form defining the pairing between the two spaces.
The product spaces $X^n$ and $X\times\R$
are assumed to be equipped with the maximum norms.
We use the same symbols $\|\cdot\|$ and $d(\cdot,\cdot)$ to denote norms and distances (including point-to-set distances with the convention $d(x,\emptyset)=+\infty$ for any $x$) in all (primal and dual) spaces.
The results of the current paper can be easily extended to the setting of general product norms satisfying certain compatibility conditions with the norm on $X$ (particularly, $\ell^p$ norms).
This can only affect some quantitative estimates.
We refer the readers to \cite{CuoKru,CuoKru25,Cuo26} for the details.
Note that the counterparts of Definition~\ref{D1.3} and Theorem~\ref{T1.4} are formulated in \cite{CuoKru} using general product norms.

Symbols $\R$, $\R_+$ and $\N$ stand for the sets of all, respectively, real, nonnegative real and positive integer numbers.
If $\al\in\R$, then $\al_+:=\max\{\al,0\}$ is the \emph{positive part} of $\al$.
The notation
$\{x^k\}\subset\Omega$ denotes a sequence of points $x^k\in\Omega$ $(k\in\N)$.
We sometimes use a shorter notation $\sigma_x:=\{x^k\}$.

\paragraph*{Normal cones and subdifferentials.}
We first recall the definitions of normal cones and subdifferentials in the sense of Fr\'echet and Clarke;
see, e.g., \cite{Cla83,Kru03,Mor06.1}.
Given a subset $\Omega$ of a normed space $X$ and a point $\bx\in \Omega$, the sets
\begin{gather}\label{NC}
N_{\Omega}^F(\bx):= \Big\{x^\ast\in X^\ast\mid
\limsup_{\Omega\ni x{\rightarrow}\bar x,\;x\ne\bx} \frac {\langle x^\ast,x-\bx\rangle}
{\|x-\bx\|} \le 0 \Big\},
\\\label{NCC}
N_{\Omega}^C(\bx):= \left\{x^\ast\in X^\ast\mid
\ang{x^\ast,z}\le0
\;\;
\text{for all}
\;\;
z\in T_{\Omega}^C(\bx)\right\}
\end{gather}
are the, respectively, \emph{Fr\'echet} and \emph{Clarke normal cones} to $\Omega$ at $\bx$.
Symbol $T_{\Omega}^C(\bx)$
in \eqref{NCC}
stands for the \emph{Clarke tangent cone} to $\Omega$ at $\bx$.
The sets \eqref{NC} and \eqref{NCC} are nonempty
closed convex cones satisfying $N_{\Omega}^F(\bx)\subset N_{\Omega}^C(\bx)$.
If $\Omega$ is a convex set, they reduce to the normal cone $N_{\Omega}(\bx)$ in the sense of convex analysis.

For an extended-real-valued function $f:X\to\R_\infty:=\R\cup\{+\infty\}$ on a normed space $X$ with epigraph $\epi f:=\{(x,\alpha) \in X \times \mathbb{R}\mid f(x) \le \alpha\}$,
the \emph{Fr\'echet} and \emph{Clarke subdifferentials} of $f$ at $\bar x\in\dom f:=\{x \in X\mid f(x) < +\infty\}$
are defined, respectively, by
\begin{gather}\label{SF}
\sd^F f(\bar x):= \left\{x^* \in X^*\mid (x^*,-1) \in N^F_{\epi f}(\bar x,f(\bar x))\right\},
\\ \label{SC}
\partial^C{f}(\bx):= \left\{x^\ast\in X^\ast\mid
(x^*,-1)\in N_{\epi f}^C(\bx,f(\bx))\right\}.
\end{gather}
The sets \eqref{SF} and \eqref{SC} are closed and convex, and satisfy
$\partial^F{f}(\bx)\subset\partial^C{f}(\bx)$.
If $f$ is convex, they
reduce to the subdifferential $\sd f(\bx)$ in the sense of convex analysis.


We often use the generic notations $N$ and $\sd$ with the convention that $N:=N^C$ and $\sd:=\sd^C$ if $X$ is a general
normed space, and $N:=N^F$ and $\sd:=\sd^F$ if $X$ is Asplund.

\paragraph*{Generalised separation.}

The proof of our main result is based on the next generalised separation statement.
It is a simplified version of a more general result from \cite[Corollary 3.2]{CuoKru25} extending the \emph{unified separation theorems} by Zheng \& Ng \cite{ZheNg05.2,ZheNg11} and their slightly more advanced versions in \cite
[Lemma~2.1]
{CuoKruTha24}.

\begin{theorem}
[Generalised separation]
\label{T1.5}
Let $X$ be Banach, $\Omega_1,\ldots,\Omega_n$ 
be closed, $x_{0i}\in\Omega_i$ ($i=1,\ldots,n$),
$\eps>0$, $\de>0$ and $\rho>0$.
Suppose that
$\bigcap_{i=1}^n\Omega_i\cap(\rho\overline\B_X)=\emptyset$, and $\|x_{0i}\|<\varepsilon$ ($i=1,\ldots,n$).
The following assertions hold true:
\begin{enumerate}
\item
\label{T1.5-1}
there exist $x_i\in\Omega_i\cap B_\de(x_{0i})$, $x_i^*\in X^*$ ($i=1,\ldots,n$) and {$v\in\rho\B_X$} such that
\begin{gather}
\label{T1.5-4}
{\de}\sum_{i=1}^{n} d\left(x_i^*,N_{\Omega_i}(x_i)\right)+ \rho\Big\|\sum_{i=1}^nx_i^*\Big\| <{\eps},\quad \sum_{i=1}^{n}\|x_i^*\|=1,
\\
\notag
\sum_{i=1}^{n} \langle x_i^*, v-x_i\rangle=\max_{1\le i\le n}\|v-x_i\|;
\end{gather}

\item
\label{T1.5-2}
if $X$ is Asplund, then, for any $\tau\in(0,1)$, there exist $x_i\in\Omega_i\cap B_\de(x_{0i})$, $x_i^*\in X^*$ ($i=1,\ldots,n$) and {$v\in\rho\B_X$} such that condition \eqref{T1.5-4} is satisfied, and
\begin{gather*}
\sum_{i=1}^{n} \langle x_i^*, v-x_i\rangle>\tau\max_{1\le i\le n}\|v-x_i\|.
\end{gather*}
\end{enumerate}
\end{theorem}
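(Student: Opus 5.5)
We prove Theorem~\ref{T1.5} by the standard route through the \EVP\ and subdifferential sum rules, applied to a function on $X^{n+1}$. Set
\begin{gather*}
f(x_1,\ldots,x_n,v):=\max_{1\le i\le n}\|v-x_i\|,
\end{gather*}
defined for $x_i\in\Omega_i$, $v\in\rho\overline\B_X$ and equal to $+\infty$ otherwise. Equivalently, $f$ is the sum of the max-norm part and the indicator functions of $\Omega_1,\ldots,\Omega_n$ and of $\rho\overline\B_X$. The function $f$ is lower semicontinuous on the Banach space $X^{n+1}$.

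First, $f>0$ everywhere. If $f(x_1,\ldots,x_n,v)=0$, then $x_i=v$ for all $i$, so $v\in\bigcap_{i=1}^n\Omega_i\cap(\rho\overline\B_X)$, which is empty by assumption.

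Second, at the starting point $(x_{01},\ldots,x_{0n},0)$ we have $f\le\max_i\|x_{0i}\|<\eps$. Hence this point is an $\eps'$-minimiser with $\eps':=\max_i\|x_{0i}\|<\eps$.

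Third, apply the \EVP\ with parameter $\lambda$ slightly smaller than $\de$, and with the weights chosen so that the $x$-components and the $v$-component are scaled separately. Concretely, use the norm $\|(x,v)\|_\de:=\max\{\max_i\|x_i\|/\de,\ \|v\|/\rho\}$ on $X^{n+1}$, and a perturbation coefficient slightly below $1$, namely $\eps'/\de$ measured in this weighted norm. This produces a point $(\hat x,\hat v)$ with $\|\hat x_i-x_{0i}\|<\de$ that minimises
\begin{gather*}
f+\gamma\|\cdot-(\hat x,\hat v)\|_\de,\qquad \gamma<\eps .
\end{gather*}
The proximity $\|\hat v\|<\rho$ should follow by shrinking slightly; if it does not, we can work with $\rho'<\rho$ close to $\rho$, since emptiness persists on a smaller ball. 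We also keep $f(\hat x,\hat v)>0$, which guarantees that the max-norm term is away from its nonsmooth point at zero.

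Next comes the dual step. In a general Banach space we apply the Clarke sum rule, and in the Asplund case the fuzzy \Fr\ sum rule. The convex function $g(x,v)=\max_i\|v-x_i\|$ has subgradients of the form $(-x_1^*,\ldots,-x_n^*,\sum_i x_i^*)$ with $\sum_i\|x_i^*\|=1$ and $\sum_i\ang{x_i^*,v-x_i}=g(x,v)$. This is the standard description of the subdifferential of a max of norms at a point where $g>0$.

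The optimality condition then splits by components. In the $x_i$-components, $x_i^*$ lies within weighted distance $\gamma/\de$ of $N_{\Omega_i}(x_i)$. In the $v$-component, $\sum_i x_i^*$ lies within $\gamma/\rho$ of $N_{\rho\overline\B}(v)$, and that normal cone is $\{0\}$ when $\|v\|<\rho$. Multiplying the first estimates by $\de$ and the second by $\rho$ and adding gives \eqref{T1.5-4}, because the dual norm of the weighted norm sums the scaled pieces.

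In the Asplund case the fuzzy sum rule perturbs the base points: the $x_i$ remain in $B_\de(x_{0i})$ and $v$ remains in $\rho\B_X$ after small moves. The exact equality for the max-norm term then degrades to the inequality $\sum_i\ang{x_i^*,v-x_i}>\tau\max_i\|v-x_i\|$. After the perturbation we renormalise so that $\sum_i\|x_i^*\|=1$ again.

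The main obstacle is the bookkeeping of the constants. In the estimate $\de\sum_i d(x_i^*,N_{\Omega_i}(x_i))+\rho\|\sum_i x_i^*\|<\eps$, the two parts must share the single budget $\eps$, and this is exactly what the weighted norm $\|\cdot\|_\de$ delivers: its dual norm is $\de\sum_i\|x_i^*\|+\rho\|v^*\|$. We also have to ensure the Ekeland point stays strictly inside the balls. In the Asplund case, the renormalisation after the fuzzy rule must not destroy the strict inequality, which is why $\tau<1$ and the strictness $\eps'<\eps$ are needed.
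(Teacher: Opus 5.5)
Your proposal is correct. It follows the route the paper indicates for this result, which it takes from \cite[Corollary~3.2]{CuoKru25} and describes as a consequence of the \EVP\ and subdifferential sum rules: Ekeland applied to $\max_i\|v-x_i\|$ plus the indicators of $\Omega_1,\ldots,\Omega_n,\rho\overline\B_X$ in the weighted norm with dual $\de\sum_i\|x_i^*\|+\rho\|v^*\|$, then the Clarke or fuzzy \Fr\ sum rule and the subdifferential of the max-norm term where it is positive. Two of your hedges are unnecessary: taking the Ekeland radius $\lambda\in(\eps'/\eps,1)$ in the weighted norm gives $\|\hat x_i-x_{0i}\|<\de$, $\|\hat v\|<\rho$ and $\gamma=\eps'/\lambda<\eps$ at once, so no $\rho'<\rho$ is needed; and in the Asplund case the subgradient of the max-norm term is taken exactly at the nearby point from the fuzzy sum rule, so $\sum_i\|x_i^*\|=1$ holds without renormalisation, and only the shift to the normal-cone base points costs the factor $\tau$.
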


In view of our convention, Theorem~\ref{T1.5} employs {Clarke normal cones} in part \eqref{T1.5-1} and {\Fr\ normal cones} in part~\eqref{T1.5-2}.
Similar to the conventional extremal principle, it is a consequence of the {\EVP} and corresponding subdifferential sum rules.


\begin{remark}
By \cite[Corollary 3.2]{CuoKru25},
the conclusions of Theorem~\ref{T1.5} hold true under a weaker than $\|x_{0i}\|<\varepsilon$ ($i=1,\ldots,n$) condition
\begin{gather*}
\max_{1\le i\le n}\|x_{0i}\| <\inf_{\substack{u_i\in\Omega_i\;(i=1,\ldots,n)}}\; \inf_{u\in\rho\B_X}\max_{1\le i\le n} \|u_i-u\|+\varepsilon.
\end{gather*}
The second (inner) infimum in the \RHS\ of the inequality represents a convex minimisation problem of finding the \emph{Chebyshev centre} of the set of points $\{u_1,\ldots,u_n\}$; see, e.g., \cite{BoyVan04}.
If an analytical solution of the problem can be found, the condition can be simplified; see~\cite{Cuong3}.
\end{remark}

\section{Sequential extremality, stationarity and approximate stationarity}
\label{S2}

In this section, we
refine the sequential extremality and stationarity concepts in Definition~\ref{D1.3}.

\subsection{Relative sequential extremality, stationarity and approximate stationarity}
\label{S2.01}

We first discuss more specific than in Definition~\ref{D1.3} extremality and stationarity properties of $\{\Omega_1,\ldots,\Omega_n\}$ at a given sequence of points
$\bold{x}^k:=(x^k_1,\ldots,x^k_n)\in\widehat\Omega :=\Omega_1\times\ldots\times\Omega_n$ $(k\in\N)$ \emph{with respect to} a given sequence of translations $\bold{a}^k:=(a^k_1,\ldots,a^k_n)\in X^n\setminus\{0\}$ $(k\in\N)$ such that $\bold{a}^k\to0$ as $k\to+\infty$.
We are going to use for these sequences the brief notations $\sigx:=\{\bold{x}^k\}$ and $\siga:=\{\bold{a}^k\}$, respectively, and similar notations for some other sequences.

Given an $\bold{x}:=(x_1,\ldots,x_n)\in X^n$, denote $\zeta(\bold{x}) :=d\big(0,\bigcap_{i=1}^n(\Omega_i-x_i)\big)$.

The next definition introduces the key properties studied in the paper together with their notations which are used throughout the paper for brevity.
Here and in what follows, {\bf E}, {\bf S} and {\bf AS} stand for the extremality, stationarity and approximate stationarity, respectively.

\begin{definition}
[Relative sequential extremality, stationarity and approximate stationarity]
\label{D2.01}
The collection $\{\Omega_1,\ldots,\Omega_n\}$ is
\begin{enumerate}
\item
\label{D2.01.1}
extremal at $\sigx$
with respect to $\siga$ (denoted \Esa) if
\begin{gather}
\label{D2.01-1}
\rho(\sigx;\siga):= \liminf_{k\to+\infty}
\zeta(\bold{x}^k+\bold{a}^k)>0;
\end{gather}
if $\rho(\sigx;\siga)=+\infty$,
we say that $\{\Omega_1,\ldots,\Omega_n\}$ is globally extremal at $\sigx$ with respect to $\siga$ (denoted \GEsa);
\item
\label{D2.01.2}
$\al$-stationary at $\sigx$ with respect to $\siga$ (denoted \Ssaal) if
\begin{gather}
\label{D2.01-2}
\al(\sigx;\siga):= \limsup_{\substack{k\to+\infty}}
\frac{\|\bold{a}^k\|} {\zeta(\bold{x}^k+\bold{a}^k)} <\al<+\infty;
\end{gather}
if $\al(\sigx;\siga)=0$ (hence, \eqref{D2.01-2} is satisfied with any $\al>0$), we say that $\{\Omega_1,\ldots,\Omega_n\}$ is stationary at $\sigx$ with respect to $\siga$ (denoted \Ssa);
\item
\label{D2.01.3}
approximately $\al$-stationary at $\sigx$ with respect to $\siga$ (denoted \ASsaal)
if
\begin{gather}
\label{D2.01-3}
\widetilde\al(\sigx;\siga):= \inf_{
\sigma_{\bold{u}}:=\{\bold{u}^k\}\subset\widehat\Omega,\,\bold{u}^k-\bold{x}^k\to0}
\al(\sigma_{\bold{u}};\siga)<\al<+\infty;
\end{gather}
if $\widetilde\al(\sigx;\siga)=0$ (hence, \eqref{D2.01-3} is satisfied with any $\al>0$), we say that $\{\Omega_1,\ldots,\Omega_n\}$ is approximately stationary at $\sigx$ with respect to $\siga$ (denoted \ASsa).
\end{enumerate}
\end{definition}

The properties in Definition~\ref{D2.01} are defined for an arbitrary sequence $\sigx\subset\widehat\Omega$, possibly unbounded.
At the same time, the model studied in this paper covers the conventional `at-a-point' setting.
In the particular case when $x_i^k=\bx$ for some $\bx\in\bigcap_{i=1}^n\Omega_i$ and all $i=1,\ldots,n$ and $k=1,2,\ldots$ (hence, $\zeta(\bold{x}^k+\bold{a}^k) =d(\bx,\bigcap_{i=1}^n(\Omega_i-a_i))$), we write
$\rho(\bx;\siga)$, $\al(\bx;\siga)$ and $\widetilde\al(\bx;\siga)$ instead of $\rho(\sigx;\siga)$, $\al(\sigx;\siga)$ and $\widetilde\al(\sigx;\siga)$, respectively, call the corresponding properties extremality, global extremality, $\al$-sta\-tionarity, stationarity, approximate $\al$-sta\-tionarity and approximate stationarity at $\bx$ with respect to $\siga$, and employ the following respective notations: \Epa, \GEpa, \Spaal, \Spa, \ASpaal\ and \ASpa.

\begin{remark}
\label{R2.02}
\begin{enumerate}
\item
The numbers $\rho(\sigx;\siga)$, $\al(\sigx;\siga)$ and $\widetilde\al(\sigx;\siga)$ computed in \eqref{D2.01-1}, \eqref{D2.01-2} and \eqref{D2.01-3} give exact quantitative characterisations of the respective properties, measuring their `power'.
\item
\label{R2.02.2}
Our standing assumption that
 $\bold{a}^k\ne0$ $(k\in\N)$ ensures that the fraction in \eqref{D2.01-2} is well-defined.
In view of \eqref{D2.01-1}, in the case of extremality we automatically have $\bold{a}^k\ne0$ for all sufficiently large $k\in\N$.
\item
\label{R2.02.3}
The extremality property in part \eqref{D2.01.1} of Definition~\ref{D2.01} is fully determined by the sequence $\sigma_{\bold{y}}:=\{\bold{y}^k\}\subset X^n$,
 where
$\bold{y}^k:=\bold{x}^k+\bold{a}^k$ $(k\in\N)$ and does not depend on its representation as the sum of two sequences.
So, it is possible to talk about extremality `with respect to $\sigma_{\bold{y}}$'.
We adopt the terminology in the definition for consistency with that used in parts \eqref{D2.01.2} and \eqref{D2.01.3}.
\item
\label{R2.02.4}
Unlike the extremality, the stationarity properties in parts \eqref{D2.01.2} and \eqref{D2.01.3} of Definition~\ref{D2.01} depend on both sequences $\sigx$ and $\siga$.
At the same time, the `$\al$-stationarity with respect to $\sigma_{\bold{y}}$',
 defined by the condition
\begin{gather*}
\al(\sigma_{\bold{y}}):= \limsup_{\substack{k\to+\infty}}
\frac{d(\bold{y}^k,\widehat\Omega)} {\zeta(\bold{y}^k)} <\al<+\infty
\end{gather*}
for a sequence $\sigma_{\bold{y}}:=\{\bold{y}^k\}\subset X^n$, $\bold{y}^k:=(y^k_1,\ldots,y^k_n)$ satisfying $d(\bold{y}^k,\widehat\Omega)>0$ $(k\in\N)$ and
$d(\bold{y}^k,\widehat\Omega)\to0$ as $k\to+\infty$, can also be of interest.
If $\bold{y}^k=\bold{x}^k+\bold{a}^k$, then obviously $\al(\sigma_{\bold{y}}) \le\al(\sigx;\siga)$.
Hence, this property is in general weaker than the one in part \eqref{D2.01.2} of Definition~\ref{D2.01}.
Moreover, given a $\sigma_{\bold{y}}\subset X^n$ as above, one can always find appropriate sequences $\sigx:=\{\bold{x}^k\}\subset\widehat\Omega$ and $\siga:=\{\bold{a}^k\}\subset X^n$ such that $\bold{y}^k:=\bold{x}^k+\bold{a}^k$ $(k\in\N)$ and $\al(\sigma_{\bold{y}}) =\al(\sigx;\siga)$.
For any $k\in\N$, we have
\begin{gather*}
\zeta(\bold{y}^k)\ge\max_{1\le i\le n} d(y^k_i,\Omega_i)) =d(\bold{y}^k,\widehat\Omega).
\end{gather*}
It follows from the above estimate that $\al(\sigma_{\bold{y}})\le1$, and the property is automatically satisfied with any $\al>1$.
Hence,
it only makes sense to study this property for small $\al>0$.
\item
The sequence $\sigma_{\bold{u}}$
in part \eqref{D2.01.3} of Definition~\ref{D2.01} can be regarded as an approximation of the given sequence $\sigx$,  justifying the name of the property.
\end{enumerate}
\end{remark}

The next two propositions are immediate consequences of Definition~\ref{D2.01}.

\begin{proposition}
{\rm(i)} If $\rho(\sigx;\siga)>0$, then $\al(\sigx;\siga)=0$;\quad
{\rm(ii)} $\widetilde\al(\sigx;\siga) \le\al(\sigx;\siga)$.

As a consequence,
\GEsa\ \folgt \Esa\ \folgt \Ssa\ \folgt \ASsa,
and\\ \Ssaal\ \folgt \ASsaal\; for any $\al>0$.

In particular, if $\bx\in\bigcap_{i=1}^n\Omega_i$, then
\GEpa\ \folgt \Epa\ \folgt \Spa\ \folgt \ASpa,
{and} \Spaal\ \folgt \ASpaal\; for any $\al>0$.
\end{proposition}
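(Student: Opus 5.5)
Both claims follow directly from the definitions in Definition~\ref{D2.01}, together with the standing assumption $\bold{a}^k\to0$. The chain of implications is then obtained by reading off the thresholds.

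For (i), suppose $\rho(\sigx;\siga)>0$. I will fix a number $r\in(0,\rho(\sigx;\siga))$; if $\rho(\sigx;\siga)=+\infty$, any $r>0$ will do. By the definition of $\liminf$, there is a $k_0$ such that $\zeta(\bold{x}^k+\bold{a}^k)>r$ for all $k\ge k_0$, where the value $+\infty$ is allowed, i.e., the intersection is empty. Hence
\begin{gather*}
0\le\frac{\|\bold{a}^k\|}{\zeta(\bold{x}^k+\bold{a}^k)}\le\frac{\|\bold{a}^k\|}{r}\qdtx{for all} k\ge k_0,
\end{gather*}
and the right-hand side tends to $0$ because $\bold{a}^k\to0$. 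This gives $\al(\sigx;\siga)=0$. The only care needed is the convention $d(0,\emptyset)=+\infty$, under which the fraction equals $0$. That convention also takes care of the case $\zeta=0$, which the positivity of $\rho$ excludes for large $k$.

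For (ii), the sequence $\sigma_{\bold{u}}:=\sigx$ itself is admissible in the infimum in \eqref{D2.01-3}: it lies in $\widehat\Omega$ and trivially satisfies $\bold{u}^k-\bold{x}^k=0\to0$. Therefore $\widetilde\al(\sigx;\siga)\le\al(\sigx;\siga)$.

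The implications then follow in order.
\begin{enumerate}
\item \GEsa\ \folgt \Esa\ because $+\infty>0$.
\item \Esa\ \folgt \Ssa\ by (i).
\item \Ssa\ \folgt \ASsa\ by (ii), since $0\le\widetilde\al\le\al=0$.
\item \Ssaal\ \folgt \ASsaal\ by (ii), since $\widetilde\al\le\al<\al$, where the last $\al$ denotes the given threshold.
\end{enumerate}
The `at-a-point' statements are the special case $x_i^k=\bx$. Nothing here is a real obstacle; the only point needing attention is the handling of infinite values of $\zeta$ in (i).
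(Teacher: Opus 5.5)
Your proof is correct and is exactly the direct unwinding of Definition~\ref{D2.01} that the paper intends when it calls this proposition an immediate consequence of the definitions; the paper gives no written proof. Part (i) follows from $\zeta(\bold{x}^k+\bold{a}^k)>r>0$ for all large $k$ together with $\bold{a}^k\to0$, and part (ii) from choosing $\sigma_{\bold{u}}:=\sigx$ in the infimum \eqref{D2.01-3}.

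One cosmetic remark: the convention $d(0,\emptyset)=+\infty$ only handles $\zeta=+\infty$ and has nothing to do with $\zeta=0$ (where the fraction would be $+\infty$, since $\bold{a}^k\ne0$). That case is excluded simply because $\zeta(\bold{x}^k+\bold{a}^k)>r$ eventually, as you also note.
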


\begin{proposition}
Let
$\{\bold{x}'^k,\bold{a}'^k\}$ be a subsequence of $\{\bold{x}^k,\bold{a}^k\}$, $\sigma_{\bold{x}'}:=\{\bold{x}'^k\}$ and $\sigma_{\bold{a}'}:=\{\bold{a}'^k\}$.
Then
$\rho(\sigma_{\bold{x}'};\sigma_{\bold{a}'}) \ge\rho(\sigx;\siga)$, $\al(\sigma_{\bold{x}'};\sigma_{\bold{a}'}) \le\al(\sigx;\siga)$ and $\widetilde\al(\sigma_{\bold{x}'};\sigma_{\bold{a}'}) \le\widetilde\al(\sigx;\siga)$.

As a consequence,
\EE{P}{\sigx;\siga}{} \folgt \EE{P}{\sigxprime;\sigaprime}{},
where {\rm\textbf{P}} stands for any of the properties: {\rm\textbf{E},~\textbf{GE}, \textbf{S$_\al$}, \textbf{S}, \textbf{AS$_\al$} or \textbf{AS}}.
In particular, if $\bx\in\bigcap_{i=1}^n\Omega_i$, then
\EE{P}{\bx;\siga}{} \folgt \EE{P}{\bx;\sigaprime}{}.
\end{proposition}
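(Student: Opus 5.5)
The plan is to compare $\liminf$ and $\limsup$ along a subsequence with those along the whole sequence, and then to handle the infimum defining $\widetilde\al$ by restricting approximating sequences. Let the subsequence be indexed by $k_j$, with $k_j\to+\infty$ strictly increasing, so that $\bold{x}'^j=\bold{x}^{k_j}$ and $\bold{a}'^j=\bold{a}^{k_j}$.

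\textbf{Step 1.} Since $\zeta(\bold{x}'^j+\bold{a}'^j)=\zeta(\bold{x}^{k_j}+\bold{a}^{k_j})$, and the $\liminf$ of a subsequence of an extended-real sequence is not smaller than that of the whole sequence, we get $\rho(\sigma_{\bold{x}'};\sigma_{\bold{a}'})\ge\rho(\sigx;\siga)$. Likewise, the $\limsup$ of a subsequence is not larger, which gives $\al(\sigma_{\bold{x}'};\sigma_{\bold{a}'})\le\al(\sigx;\siga)$. If some $\zeta$ value equals $+\infty$, the corresponding fraction is $0$. No issue arises, because the convention $d(x,\emptyset)=+\infty$ is compatible with these monotonicity facts.

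\textbf{Step 2.} For $\widetilde\al$, take any admissible $\sigma_{\bold{u}}=\{\bold{u}^k\}\subset\widehat\Omega$ with $\bold{u}^k-\bold{x}^k\to0$. The subsequence $\{\bold{u}^{k_j}\}$ is then admissible for $\sigma_{\bold{x}'}$, since $\bold{u}^{k_j}-\bold{x}'^j\to0$. Applying Step 1 to the pair $(\sigma_{\bold{u}},\siga)$ gives $\al(\{\bold{u}^{k_j}\};\sigma_{\bold{a}'})\le\al(\sigma_{\bold{u}};\siga)$. Hence $\widetilde\al(\sigma_{\bold{x}'};\sigma_{\bold{a}'})\le\al(\sigma_{\bold{u}};\siga)$, and taking the infimum over $\sigma_{\bold{u}}$ yields the third inequality. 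The fact that the infimum on the left ranges over a possibly larger family only helps the inequality.

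\textbf{Step 3.} The implications follow directly from Definition~\ref{D2.01}. For \textbf{E} and \textbf{GE}, positivity or infiniteness of $\rho$ is preserved under the inequality $\rho'\ge\rho$. For \textbf{S$_\al$} and \textbf{AS$_\al$}, the condition ``$<\al$'' is preserved, and for \textbf{S} and \textbf{AS} the value $0$ is preserved, because each quantity is nonnegative and can only decrease. The at-a-point case is the special case of constant $\bold{x}^k=(\bx,\ldots,\bx)$.

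None of these steps is a real obstacle. The only point needing care is the handling of extended values, namely $\zeta=+\infty$ and empty intersections, together with the observation in Step 2 that subsequences of admissible approximations remain admissible.
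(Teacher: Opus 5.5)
Your proposal is correct and matches the paper's approach. The paper gives no separate proof and calls the statement an immediate consequence of Definition~\ref{D2.01}. Your argument supplies exactly that verification: $\liminf$ and $\limsup$ are monotone under passing to the subsequence indexed by $k_j$, and for $\widetilde\al$ you restrict any admissible approximating sequence $\sigma_{\bold{u}}$ to the same indices.
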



We now illustrate the properties in Definition~\ref{D2.01} by examples.
The first one emphasises the role of the sequence $\siga$ of translations.
We aim at providing some insights on how to choose appropriate translations ensuring the desired extremality or stationarity properties.

\begin{example}
\label{E2.5}
Let $X:=\R^2$,
$\Omega_1:=\{(u,v)\in X\mid v\ge0\}$, $\Omega_2:=\{(u,v)\in X\mid v\le0\}$ and $\bx:=(0,0)$.
The pair $\{\Omega_1,\Omega_2\}$ is obviously extremal
at $\bx$ in the sense of Definition~\ref{D1.1}\eqref{D1.1.1}: for any $\eps>0$, one can take $a_1:=(0,0)$ and $a_2:=(0,\eps/2)$.

We next show that the extremality and stationarity properties of this pair depend strongly on the choice of translations.
Let $\bold{a}^k:=(a_1^k,a_2^k)
\in X^2\setminus\{0\}$ and $\bold{a}^k\to0$.
Then
$\Omega_1-a_1^k=\{(u,v)\in{\R^2}\mid v\ge-a_{12}^k\}$, $\Omega_2-a_2^k=\{(u,v)\in{\R^2}\mid v\le-a_{22}^k\}$ and $A^k:=(\Omega_1-a_1^k)\cap(\Omega_2-a_2^k) =\{(u,v)\in{\R^2}\mid -a_{12}^k\le v\le-a_{22}^k\}$.
Thus, $\zeta(\bold{a}^k)=+\infty$
if $a_{12}^k<a_{22}^k$, and $\zeta(\bold{a}^k)=\max\{-a_{12}^k,a_{22}^k,0\}\le\|\bold{a}^k\|$ otherwise.
If $a_{12}^k<a_{22}^k$ for all sufficiently large $k\in\N$, then $\rho(\bx;\siga)=+\infty$. Otherwise, $\rho(\bx;\siga)=\liminf_{k\to+\infty}\zeta(\bold{a}^k)=0$ and $\al(\bx;\siga)=\limsup_{k\to+\infty} \|\bold{a}^k\|/\zeta(\bold{a}^k){\ge1}$.
\sloppy

In the first case, $\{\Omega_1,\Omega_2\}$ is globally extremal at $\bx$ with respect to $\siga$.
A typical example is given by $a_1^k:=(0,0)$, $a_2^k:=(0,1/k)$.
This is basically the conventional extremality as in Definition~\ref{D1.1}\eqref{D1.1.1}.

In the second case, $\{\Omega_1,\Omega_2\}$ is neither extremal nor stationary at $\bx$ with respect to $\siga$.
Moreover, $\{\Omega_1,\Omega_2\}$ is not approximately stationary at $\bx$ with respect to $\siga$.
Indeed, if $\Omega_1\ni u^k_1\to0$, $\Omega_2\ni u^k_2\to0$ and $\bold{u}^k:=(u_1^k,u_2^k)$, then $u_{12}^k\ge0$ and $u_{22}^k\le0$; hence, $u_{12}^k+a_{12}^k\ge a_{12}^k$ and $u_{22}^k+a_{22}^k\le a_{22}^k$.
Similarly to the above, $$\zeta(\bold{u}^k+\bold{a}^k) =\max\{-u_{12}^k-a_{12}^k,u_{22}^k+a_{22}^k,0\} \le \zeta(\bold{a}^k)\le\|\bold{a}^k\|,$$
and consequently, $\al(\sigu;\siga)\ge1$, where $\sigu:=\{(u^k_1,u^k_2)\}$.
Thus, $\widetilde\al(\bx;\siga)\ge1$.
Hence, this case is of little interest from the point of view of applications.

The pair $\{\Omega_1,\Omega_2\}$ can be $\al$-stationary at $\bx$ with respect to $\siga$ with some $\al>1$.
For instance, if $a_1^k=a_2^k=(0,1/k)$, then $\|\bold{a}^k\|=\zeta(\bold{a}^k)=1/k$, and consequently, $\al(\bx;\siga)=1$, i.e., $\{\Omega_1,\Omega_2\}$ is $\al$-sta\-tionary at $\bx$ with respect to $\siga$ with any $\al>1$.
On the other hand, if $a_1^k=(0,1/k)$ and $a_2^k=(0,-1/k)$, then $\zeta(\bold{a}^k)=0$ and $\al(\bx;\siga)=+\infty$.
Alternatively, one can take
$a_1^k=(0,1/k^{1/2})$ or $a_1^k=(1/k^{1/2},1/k)$, and $a_2^k=(0,1/k)$.
Then $\|\bold{a}^k\|=1/k^{1/2}$, $\zeta(\bold{a}^k)=1/k$, and consequently, $\al(\bx;\siga)=+\infty$.
Thus, in these situations $\{\Omega_1,\Omega_2\}$ cannot be $\al$-stationary at $\bx$ with respect to $\siga$ with whatever $\al>0$.

Note also that the first components of the vectors $a_1^k$ and $a_2^k$ play little or no role in this example.
This is, of course, not surprising having in mind the geometry of the sets.
\end{example}

The next two examples illustrate differences between the properties in Definition~\ref{D2.01}; see Fig.~\ref{fig}.

\begin{example}
\label{E2.6}
Let $X:=\R^2$,
$\Omega_1:=\{(u,v)\in X\mid u^2+v\ge0\}$, $\Omega_2:=\{(u,v)\in X\mid v\le0\}$ and $\bx:=(0,0)$.
Take $a_1^k:=(0,0)$, $a_2^k:=(0,1/k)$ and $\bold{a}^k:=(a_1^k,a_2^k)$.
Then $\|\bold{a}^k\|=1/k$, $\Omega_1-a_1^k=\Omega_1$, $\Omega_2-a_2^k=\{(u,v)\in{\R^2}\mid v\le-1/k\}$, $A^k:=(\Omega_1-a_1^k)\cap(\Omega_2-a_2^k) =\{(u,v)\in{\R^2}\mid -u^2\le v\le-1/k\}$ and
$\zeta(\bold{a}^k)=1/k^{1/2}$.
Hence, $\rho(\bx;\siga)=\al(\bx;\siga)=0$, and consequently, $\{\Omega_1,\Omega_2\}$ is stationary but not extremal at $\bx$ with respect to $\siga$.
\end{example}

\begin{example}
\label{E2.7}
Let $X:=\R^2$, $\Omega_1:=\{(u,v)\in X\mid v\ge0\;\;\text{or}\;\;u+v\ge0\}$, $\Omega_2:=\{(u,v)\in X\mid v\le0\}$,
 and $\bx:=(0,0)$.
Take $a_1^k:=(0,0)$, $a_2^k:=(0,1/k^2)$ and $\bold{a}^k:=(a_1^k,a_2^k)$.
Then $\|\bold{a}^k\|=1/k^2$, $\Omega_1-a_1^k=\Omega_1$, $\Omega_2-a_2^k=\{(u,v)\in{\R^2}\mid v\le-1/k^2\}$, $A^k:=(\Omega_1-a_1^k)\cap(\Omega_2-a_2^k) =\{(u,v)\in{\R^2}\mid -u\le v\le-1/k^2\}$ and
$\zeta(\bold{a}^k)=1/k^2$.
Hence, $\al(\bx;\siga)=1$, and consequently, $\{\Omega_1,\Omega_2\}$ is not $\al$-stationary at $\bx$ with respect to $\siga$ with any $\al\in(0,1]$.

Now, take $x_{12}=x_{21}=x_{22}=0$, $x_{11}=-1/k$ and $\bold{x}^k:=(x_1^k,x_2^k)$.
Then $\Omega_1-x_1^k-a_1^k=\{(u,v)\in\R^2\mid v\ge0\;\;\text{or}\;\;u+v\ge1/k\}$, $\Omega_2-x_2^k-a_2^k=\{(u,v)\in{\R^2}\mid v\le-1/k^2\}$, $B^k:=(\Omega_1-x_1^k-a_1^k)\cap(\Omega_2-x_2^k-a_2^k) =\{(u,v)\in{\R^2}\mid 1/k-u\le v\le-1/k^2\}$ and
$\zeta(\bold{x}^k+\bold{a}^k)=1/k+1/k^2$.
Hence, $\al(\sigx;\siga) =\lim_{k\to+\infty}\frac{1/k^2}{1/k+1/k^2}=0$, and consequently, $\{\Omega_1,\Omega_2\}$ is stationary at $\sigx$ with respect to $\siga$.
By Definition~\ref{D2.01}\eqref{D2.01.3}, $\widetilde\al(\bx;\siga)=0$, and consequently, $\{\Omega_1,\Omega_2\}$ is approximately stationary at $\bx$ with respect to $\siga$.
\end{example}

\begin{figure}[H]
\centering
\includegraphics[width=.8\linewidth]{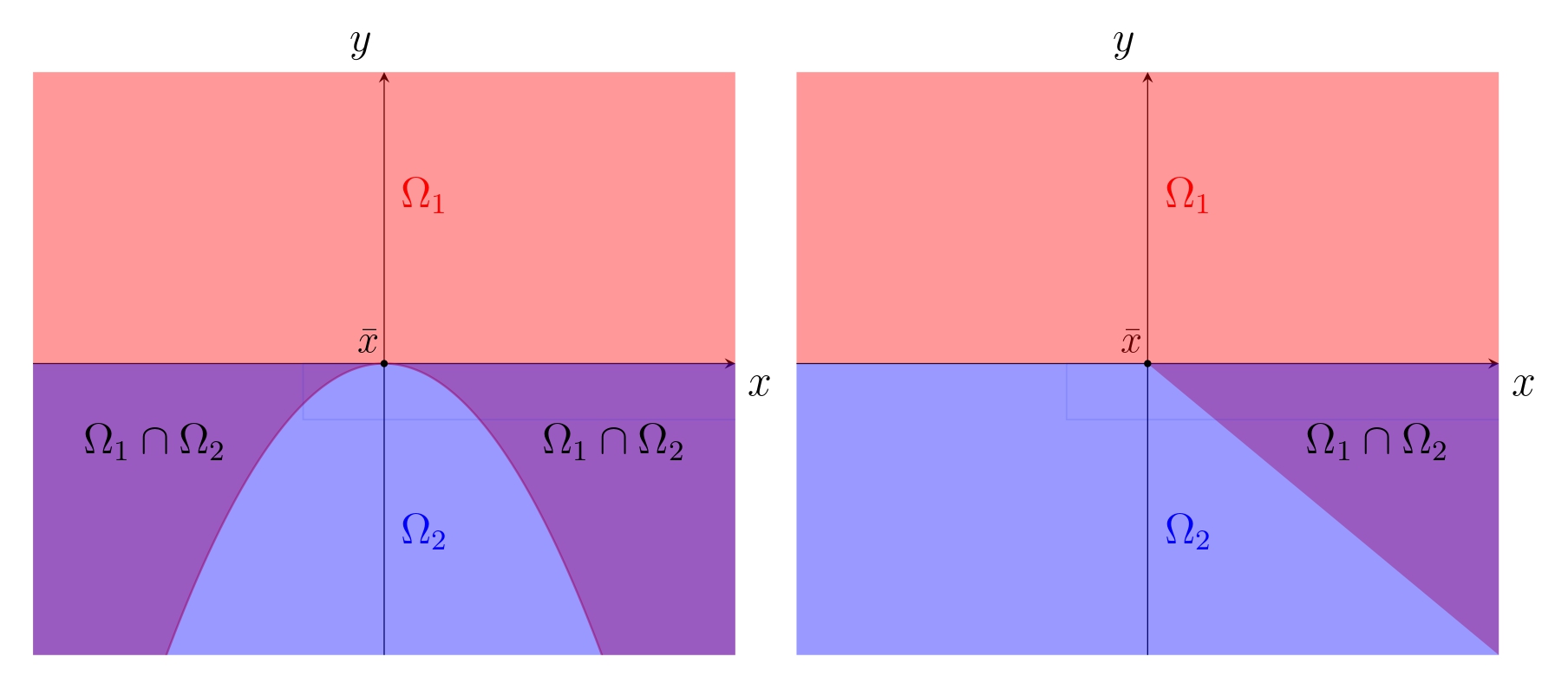}
\\
{\scriptsize
\hspace{15mm}Stationarity \hspace{35mm}
Approximate stationarity}\hspace{10mm}
\caption{Examples~\ref{E2.6} and \ref{E2.7}}
\label{fig}
\end{figure}

The sequential extremality and approximate stationarity properties possess certain stability.

\begin{proposition}
\label{P2.6}
Let
$\bold{u}^k\in\widehat\Omega$ $(k\in\N)$, $\bold{u}^k-\bold{x}^k\to0$ as $k\to+\infty$ and $\sigma_{\bold{u}}:=\{\bold{u}^k\}$.
Then
$\rho(\sigx;\siga) =\rho(\sigma_{\bold{u}};\sigma_{\bold{a}'})$ where $\sigma_{\bold{a}'}:=\{\bold{a}'^k\}$, $\bold{a}'^k:=\bold{a}^k+\bold{x}^k-\bold{u}^k$ $(k\in\N)$, and $\widetilde\al(\sigx;\siga) =\widetilde\al(\sigma_{\bold{u}};\siga)$.
As a consequence,
\Esa\ \iff \EE{E}{\sigma_{\bold{u}};\sigma_{\bold{a}'}}{}
and
\ASsa\ \iff \EE{AS}{\sigma_{\bold{u}};\siga}{}.

In particular, if $\bx\in\bigcap_{i=1}^n\Omega_i$, and $x^k_i\to\bx$ as $k\to+\infty$ $(i=1,\ldots,n)$, then\\
\Epa\ \iff \EE{E}{\sigx;\sigma_{\bold{a}'}}{} where $\bold{a}'^k:=\bold{a}^k+\bx-\bold{x}^k$ $(k\in\N)$,
and
\ASpa\ \iff \EE{AS}{\sigx;\siga}{}.
\end{proposition}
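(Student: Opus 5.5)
The first equality is essentially an identity. By definition, $\bold{u}^k+\bold{a}'^k=\bold{u}^k+\bold{a}^k+\bold{x}^k-\bold{u}^k=\bold{x}^k+\bold{a}^k$ for every $k\in\N$. Hence $\zeta(\bold{u}^k+\bold{a}'^k)=\zeta(\bold{x}^k+\bold{a}^k)$ for all $k$, and taking $\liminf$ in \eqref{D2.01-1} gives $\rho(\sigma_{\bold{u}};\sigma_{\bold{a}'})=\rho(\sigx;\siga)$. This is exactly Remark~\ref{R2.02}\eqref{R2.02.3}: extremality depends only on the sum sequence.

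Two admissibility points need a word. First, $\bold{a}'^k\to0$, since both $\bold{a}^k\to0$ and $\bold{x}^k-\bold{u}^k\to0$. Second, the standing assumption $\bold{a}'^k\ne0$ may fail for finitely or infinitely many $k$. This does not matter for extremality: by Remark~\ref{R2.02}\eqref{R2.02.2}, when the $\liminf$ is positive, $\zeta(\bold{x}^k)>0$ forces $\bold{a}'^k\ne0$ for all large $k$. Moreover, the quantity $\rho$ itself is defined without dividing by $\|\bold{a}'^k\|$, so I would simply note that the formula makes sense regardless.

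For the second equality I would show that the two infima in \eqref{D2.01-3} are taken over the same set of sequences, with the same $\siga$. Let $\mathcal U(\sigx):=\{\sigma_{\bold{w}}=\{\bold{w}^k\}\subset\widehat\Omega\mid \bold{w}^k-\bold{x}^k\to0\}$. If $\sigma_{\bold{w}}\in\mathcal U(\sigx)$, then $\bold{w}^k-\bold{u}^k=(\bold{w}^k-\bold{x}^k)+(\bold{x}^k-\bold{u}^k)\to0$, so $\sigma_{\bold{w}}\in\mathcal U(\sigma_{\bold{u}})$. The converse inclusion follows by the symmetric argument. Therefore $\mathcal U(\sigx)=\mathcal U(\sigma_{\bold{u}})$. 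Since the integrand $\al(\sigma_{\bold{w}};\siga)$ involves only $\sigma_{\bold{w}}$ and $\siga$, the infima coincide, giving $\widetilde\al(\sigx;\siga)=\widetilde\al(\sigma_{\bold{u}};\siga)$.

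The equivalences \Esa\ \iff \EE{E}{\sigma_{\bold{u}};\sigma_{\bold{a}'}}{} and \ASsa\ \iff \EE{AS}{\sigma_{\bold{u}};\siga}{} then follow immediately from Definition~\ref{D2.01}. For the `in particular' part, I apply the general statement with the roles swapped. Take the given sequence to be the constant one $(\bx,\ldots,\bx)$ and the approximating sequence to be $\sigx$; this is legitimate because $\bold{x}^k-(\bx,\ldots,\bx)\to0$. The translations then become $\bold{a}^k+\bx-\bold{x}^k$, where $\bx$ is identified with $(\bx,\ldots,\bx)$.

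There is no real obstacle. The only delicate point is the bookkeeping about $\bold{a}'^k\ne0$ mentioned above, which I would handle with a brief remark rather than any further argument.
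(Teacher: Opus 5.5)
Your argument is correct, and it is the reasoning the paper leaves implicit. The paper gives no separate proof: it relies on the identity $\bold{u}^k+\bold{a}'^k=\bold{x}^k+\bold{a}^k$ (cf.\ Remark~\ref{R2.02}\eqref{R2.02.3}), on the two infima ranging over the same admissible approximating sequences, and on the subsequent remark that $\bold{a}'^k\ne0$ for large $k$ when $\rho>0$. One small slip: $\bold{a}'^k\ne0$ eventually because $\zeta(\bold{u}^k)=0$ (as $\bold{u}^k\in\widehat\Omega$) while $\zeta(\bold{u}^k+\bold{a}'^k)>0$ for large $k$, not because ``$\zeta(\bold{x}^k)>0$'', which is always $0$.
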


\begin{remark}
\begin{enumerate}
\item
Under the assumptions of Proposition~\ref{P2.6}, if $\rho(\sigx;\siga)>0$, then $\bold{a}'^k\ne0$ for all sufficiently large $k\in\N$.
\item
The formulations of stability of the extremality and approximate stationarity properties in Proposition~\ref{P2.6} are different.
The approximate stationarity equality $\widetilde\al(\sigx;\siga) =\widetilde\al(\sigma_{\bold{u}};\siga)$ involves different sequences $\sigx$ and $\sigu$ (such that $\bold{u}^k-\bold{x}^k\to0$) and holds with the same sequence $\siga$, while to ensure the extremality equality $\rho(\sigx;\siga) =\rho(\sigma_{\bold{u}};\sigma_{\bold{a}'})$, we need a different sequence $\sigma_{\bold{a}'}$ (satisfying $\bold{a}'^k-\bold{a}^k\to0$).
\item
In view of Example~\ref{E2.7}, the (non-approximate) stationarity lacks stability.
\end{enumerate}
\end{remark}

The next proposition gives characterisations of the properties in Definition~\ref{D2.01} holding on some subsequence of the given sequence $\{\bold{x}^k,\bold{a}^k\}$.

\begin{proposition}
\label{P2.9}
Let $\{\bold{x}'^k,\bold{a}'^k\}$ be a subsequence of $\{\bold{x}^k,\bold{a}^k\}$, $\sigma_{\bold{x}'}:=\{\bold{x}'^k\}$ and $\sigma_{\bold{a}'}:=\{\bold{a}'^k\}$.
Then
\begin{enumerate}
\item
\label{P2.9.1}
\EE{E}{\sigma_{\bold{x}'};\sigma_{\bold{a}'}}{}\ \iff
there is a $\rho>0$ such that, for any $\varepsilon>0$, there exists an integer $k>\eps\iv$ such that $\|\bold{a}^k\|<\eps$ and
$\zeta(\bold{x}^k+\bold{a}^k)>\rho$;

\item
\EE{GE}{\sigma_{\bold{x}'};\sigma_{\bold{a}'}}{}\ \iff
the above condition holds for any $\rho>0$;

\item
\EE{S}{\sigma_{\bold{x}'};\sigma_{\bold{a}'}}{\al}\ \iff
there is an $\al'\in(0,\al)$ such that,
for any $\varepsilon>0$, there exists an integer $k>\eps\iv$ such that $\|\bold{a}^k\|<\eps$ and $\|\bold{a}^k\|<\al'\zeta(\bold{x}^k+\bold{a}^k)$;
\item
\EE{S}{\sigma_{\bold{x}'};\sigma_{\bold{a}'}}{}\ \iff
for any $\varepsilon>0$, there exists an integer $k>\eps\iv$ such that $\|\bold{a}^k\|<\eps$ and $\|\bold{a}^k\|<\eps\zeta(\bold{x}^k+\bold{a}^k)$;
\item
\EE{AS}{\sigma_{\bold{x}'};\sigma_{\bold{a}'}}{\al}\ \iff
there is an $\al'\in(0,\al)$ such that,
for any $\varepsilon>0$, there exist an integer $k>\eps\iv$ and points $x_i'\in\Omega_i\cap B_\eps(x^k_i)$ $(i=1,\ldots,n)$ such that $\|\bold{a}^k\|<\eps$ and $\|\bold{a}^k\|<\al' \zeta(\bold{x}'+\bold{a}^k)$, where  $\bold{x}':=(x_1',\ldots,x_n')$;
\item
\EE{AS}{\sigma_{\bold{x}'};\sigma_{\bold{a}'}}{}\ \iff
for any $\varepsilon>0$, there exist an integer $k>\eps\iv$ and points $x_i'\in\Omega_i\cap B_\eps(x^k_i)$ $(i=1,\ldots,n)$ such that $\|\bold{a}^k\|<\eps$ and $\|\bold{a}^k\|<\eps \zeta(\bold{x}'+\bold{a}^k)$, where  $\bold{x}':=(x_1',\ldots,x_n')$.
\end{enumerate}
\end{proposition}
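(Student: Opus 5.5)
The plan is to read each item as: \emph{there exists} a subsequence $\{\bold{x}'^k,\bold{a}'^k\}$ of $\{\bold{x}^k,\bold{a}^k\}$ for which the property \textbf{P} holds if and only if the stated condition holds. Each direction will be argued directly from the $\liminf$/$\limsup$/$\inf$ definitions in Definition~\ref{D2.01}. One fact will be used throughout: $\bold{a}^k\to0$, so for a fixed $\eps>0$ the requirement $\|\bold{a}^k\|<\eps$ holds for all large $k$. It therefore never conflicts with the other requirements and only serves to make the conditions self-contained.

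\emph{Necessity.} Suppose \textbf{P} holds along a subsequence with indices $k_j\to+\infty$.
\begin{itemize}
\item For \textbf{E}, fix $\rho\in(0,\rho(\sigma_{\bold{x}'};\sigma_{\bold{a}'}))$. By the definition of $\liminf$, $\zeta(\bold{x}^{k_j}+\bold{a}^{k_j})>\rho$ for all large $j$. Given $\eps>0$, choose $j$ so large that also $k_j>\eps\iv$ and $\|\bold{a}^{k_j}\|<\eps$.
\item For \textbf{GE}, the same argument works for every $\rho>0$.
\item For \textbf{S$_\al$}, choose $\al'\in(\al(\sigma_{\bold{x}'};\sigma_{\bold{a}'}),\al)$. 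The $\limsup$ gives $\|\bold{a}^{k_j}\|<\al'\zeta(\bold{x}^{k_j}+\bold{a}^{k_j})$ for all large $j$.
\item For \textbf{S}, the same holds with $\al'$ replaced by an arbitrary $\eps>0$.
\item For \textbf{AS$_\al$}, the infimum in \eqref{D2.01-3} being $<\al$ yields $\sigma_{\bold{u}}\subset\widehat\Omega$ with $\bold{u}^j-\bold{x}^{k_j}\to0$ and $\al(\sigma_{\bold{u}};\sigma_{\bold{a}'})<\al$. Choose $\al'$ strictly between these two numbers. For large $j$, the points $x_i':=u_i^j$ satisfy $x_i'\in\Omega_i\cap B_\eps(x_i^{k_j})$ and $\|\bold{a}^{k_j}\|<\al'\zeta(\bold{x}'+\bold{a}^{k_j})$.
\item \textbf{AS} is analogous, with $\eps$ in place of $\al'$.
\end{itemize}

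\emph{Sufficiency.} Here I will build the subsequence inductively. Apply the condition with $\eps_1:=1$ to get an index $k_1$. Having $k_{j-1}$, apply it with $\eps_j:=\min\{1/j,\,1/k_{j-1}\}$. This gives $k_j>\eps_j\iv\ge k_{j-1}$, so the indices strictly increase, and they satisfy $\|\bold{a}^{k_j}\|<\eps_j$.
\begin{itemize}
\item For \textbf{E}, $\zeta(\bold{x}^{k_j}+\bold{a}^{k_j})>\rho$ for every $j$, so the $\liminf$ along the subsequence is $\ge\rho>0$.
\item For \textbf{GE}, use $\rho:=j$ at step $j$, so $\zeta>j\to+\infty$.
\item For \textbf{S$_\al$}, every ratio is $<\al'$, so the $\limsup$ is $\le\al'<\al$.
\item For \textbf{S}, every ratio is $<\eps_j\to0$, so the $\limsup$ is $0$.
\item For \textbf{AS$_\al$} and \textbf{AS}, collect the points $x_i'$ obtained at step $j$ into $\bold{u}^j\in\widehat\Omega$. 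Then $\|\bold{u}^j-\bold{x}^{k_j}\|<\eps_j\to0$, so $\sigma_{\bold{u}}$ is admissible in \eqref{D2.01-3}, and $\al(\sigma_{\bold{u}};\sigma_{\bold{a}'})\le\al'$, respectively $=0$. Hence $\widetilde\al(\sigma_{\bold{x}'};\sigma_{\bold{a}'})\le\al'<\al$, respectively $=0$.
\end{itemize}

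The only delicate points are bookkeeping ones. First, the inequalities must remain strict after passing to $\limsup$, which is why an intermediate $\al'<\al$ is inserted rather than using $\al$ itself. Second, the chosen indices must form a genuine strictly increasing subsequence, which is ensured by taking $\eps_j\le1/k_{j-1}$. Third, in the approximate case the approximating sequence $\sigma_{\bold{u}}$ must be indexed along the same subsequence. I expect the \textbf{AS} items to be the main place needing care, specifically matching $\bold{u}^j$ with $\bold{x}^{k_j}$ so that $\bold{u}^j-\bold{x}^{k_j}\to0$. Everything else is routine.
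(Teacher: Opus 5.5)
Your proposal is correct and follows essentially the same route as the paper. The paper proves only item (i), by the same $\liminf$ argument for necessity and, for sufficiency, by choosing for each $j$ an index $k^j>j$ with $\|\bold{a}^{k^j}\|<1/j$ and $\zeta(\bold{x}^{k^j}+\bold{a}^{k^j})>\rho$, leaving the remaining items as analogous. Your choice $\eps_j:=\min\{1/j,1/k_{j-1}\}$, which makes the indices strictly increasing so that a genuine subsequence is obtained, and your explicit handling of the \textbf{AS} items via the re-indexed sequence $\sigma_{\bold{u}}$ are small but welcome tightenings of that sketch.
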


All the assertions in Proposition~\ref{P2.9} can be proved using basically the same arguments.
Below, we provide a proof of assertion \eqref{P2.9.1} as an illustration.

\begin{proof}
[of Proposition~\ref{P2.9}\eqref{P2.9.1}]
Let $\N\ni k^j\to+\infty$ as $j\to+\infty$,
$\{\Omega_1,\ldots,\Omega_n\}$ be extremal at $\sigma_{\bold{x}'}:=\{\bold{x}^{k^j}\}$ with respect to $\sigma_{\bold{a}'}:=\{\bold{a}^{k^j}\}$, and let $\eps>0$.
By Definition~\ref{D2.01}\eqref{D2.01.1}, $\rho({\sigma_{\bold{x}'};\sigma_{\bold{a}'}}{})>0$.
Take any $\rho\in\big(0,\rho({\sigma_{\bold{x}'};\sigma_{\bold{a}'}}{})\big)$.
By \eqref{D2.01-1}, there is a $j_0\in\N$ such that $\zeta(\bold{x}^{k^j}+\bold{a}^{k^j})>\rho$ for all integers $j>j_0$.
Since $\bold{a}^k\to0$, we can choose an integer $j>j_0$ so that $k:=k^j>\eps\iv$ and $\|\bold{a}^k\|<\eps$.
Condition $\zeta(\bold{x}^k+\bold{a}^k)>\rho$ is satisfied automatically.

Conversely, suppose that there is a $\rho>0$ such that, for any $\varepsilon>0$, there exists an integer $k>\eps\iv$ such that $\|\bold{a}^k\|<\eps$ and $\zeta(\bold{x}^k+\bold{a}^k)>\rho$.
Let $j\in\N$.
By the assumption, there exists an integer $k^j>j$ such that $\|\bold{a}^{k^j}\|<1/j$ and $\zeta(\bold{x}^{k^j}+\bold{a}^{k^j})>\rho$.
Denote $\sigma_{\bold{x}'}:=\{\bold{x}^{k^j}\}$ and $\sigma_{\bold{a}'}:=\{\bold{a}^{k^j}\}$.
Hence, $\rho({\sigma_{\bold{x}'};\sigma_{\bold{a}'}}{})\ge\rho>0$.
By Definition~\ref{D2.01}\eqref{D2.01.1}, $\{\Omega_1,\ldots,\Omega_n\}$ is extremal at $\sigma_{\bold{x}'}$ with respect to $\sigma_{\bold{a}'}$.
\qed\end{proof}

\begin{corollary}
Let $\bx\in\bigcap_{i=1}^n\Omega_i$, and
$\sigaprime$ be a subsequence of $\siga$.
Then
\begin{enumerate}
\item
\EE{E}{\bx;\sigma_{\bold{a}'}}{}\ \iff
there is a $\rho>0$ such that, for any $\varepsilon>0$, there exists an integer $k>\eps\iv$ such that $\|\bold{a}^k\|<\eps$ and
$d\big(\bx,\bigcap_{i=1}^n(\Omega_i-a^k_i)\big)>\rho$;

\item
\EE{GE}{\bx;\sigma_{\bold{a}'}}{}\ \iff
the above condition holds for any $\rho>0$;

\item
\EE{S}{\bx;\sigma_{\bold{a}'}}{\al}\ \iff
there is an $\al'\in(0,\al)$ such that,
for any $\varepsilon>0$, there exists an integer $k>\eps\iv$ such that $\|\bold{a}^k\|<\eps$ and $\|\bold{a}^k\|<\al' d\big(\bx,\bigcap_{i=1}^n(\Omega_i-a^k_i)\big)$;

\item
\EE{S}{\bx;\sigma_{\bold{a}'}}{}\ \iff
for any $\varepsilon>0$, there exists an integer $k>\eps\iv$ such that $\|\bold{a}^k\|<\eps$ and $\|\bold{a}^k\|<\eps d\big(\bx,\bigcap_{i=1}^n(\Omega_i-a^k_i)\big)$;
\item
\EE{AS}{\bx;\sigma_{\bold{a}'}}{\al}\ \iff
for any $\varepsilon>0$, there exist an integer $k>\eps\iv$ and points $x_i'\in\Omega_i\cap B_\eps(\bx)$ $(i=1,\ldots,n)$ such that $\|\bold{a}^k\|<\eps$ and $\|\bold{a}^k\|<\al' \zeta(\bold{x}'+\bold{a}^k)$, where  $\bold{x}':=(x_1',\ldots,x_n')$;
\item
\EE{AS}{\bx;\sigma_{\bold{a}'}}{}\ \iff
for any $\varepsilon>0$, there exist an integer $k>\eps\iv$ and points $x_i'\in\Omega_i\cap B_\eps(\bx)$ $(i=1,\ldots,n)$ such that $\|\bold{a}^k\|<\eps$ and $\|\bold{a}^k\|<\eps \zeta(\bold{x}'+\bold{a}^k)$, where  $\bold{x}':=(x_1',\ldots,x_n')$.
\end{enumerate}
\end{corollary}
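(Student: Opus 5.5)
The corollary is the special case of Proposition~\ref{P2.9} in which $x^k_i:=\bx$ for all $i=1,\ldots,n$ and $k\in\N$, so the plan is to specialise that proposition and check that each expression reduces to the one written in the corollary. First, with $\bold{x}^k=(\bx,\ldots,\bx)$ we have $\zeta(\bold{x}^k+\bold{a}^k)=d\big(0,\bigcap_{i=1}^n(\Omega_i-\bx-a^k_i)\big)=d\big(\bx,\bigcap_{i=1}^n(\Omega_i-a^k_i)\big)$. This is just translation invariance of the distance, as already noted after Definition~\ref{D2.01}. Substituting this identity into assertions (i)--(iv) of Proposition~\ref{P2.9} gives assertions (i)--(iv) of the corollary verbatim. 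Every subsequence of $\{\bold{x}^k,\bold{a}^k\}$ has the constant first component, so $\sigma_{\bold{x}'}$ is again the constant sequence at $\bx$, and $\EE{P}{\sigma_{\bold{x}'};\sigma_{\bold{a}'}}{}$ is exactly $\EE{P}{\bx;\sigma_{\bold{a}'}}{}$ in the notation of the paper.

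For the approximate stationarity assertions (v) and (vi), the conditions $x_i'\in\Omega_i\cap B_\eps(x^k_i)$ become $x_i'\in\Omega_i\cap B_\eps(\bx)$, while $\zeta(\bold{x}'+\bold{a}^k)$ is left unchanged. This is again a verbatim specialisation. In (v) one must read the statement as including the quantifier ``there is an $\al'\in(0,\al)$ such that'' as in Proposition~\ref{P2.9}; I would make this explicit when invoking it.

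If one prefers a self-contained argument rather than a citation, I would reproduce the proof of Proposition~\ref{P2.9}\eqref{P2.9.1}. For necessity, pick $\rho$ (respectively $\al'$) strictly between $0$ and the liminf (respectively strictly between the limsup and $\al$); this makes the inequality hold for all large indices of the subsequence, and since $\bold{a}^k\to0$ one can choose an index $k>\eps\iv$ with $\|\bold{a}^k\|<\eps$. For sufficiency, apply the condition with $\eps=1/j$ to extract indices $k^j>j$, which defines the subsequence, and then read off the liminf/limsup bound. For (v) and (vi), the points $x_i'$ chosen at step $j$ form a sequence $\bold{u}^j\in\widehat\Omega$ with $\bold{u}^j-\bold{x}^{k^j}\to0$. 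These points serve as the competitor in the infimum defining $\widetilde\al$; conversely, the near-optimal $\sigma_{\bold{u}}$ from \eqref{D2.01-3} supplies the points.

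The only mildly delicate step is (v)/(vi) in the sufficiency direction: there one must check that the constructed $\{\bold{u}^j\}$ is admissible in \eqref{D2.01-3}, i.e.\ $\bold{u}^j\in\widehat\Omega$ and $\bold{u}^j-\bold{x}^{k^j}\to0$, which follows from $\|x_i'-\bx\|<1/j$. Nothing else requires work beyond the substitution described above.
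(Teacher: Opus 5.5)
Your proposal is correct and takes the same route as the paper: the corollary is exactly Proposition~\ref{P2.9} specialised to $x_i^k=\bx$, using $\zeta(\bold{x}^k+\bold{a}^k)=d\big(\bx,\bigcap_{i=1}^n(\Omega_i-a^k_i)\big)$ and $B_\eps(x_i^k)=B_\eps(\bx)$, and your self-contained fallback mirrors the paper's proof of Proposition~\ref{P2.9}\eqref{P2.9.1}. You are also right that assertion (v) of the corollary, as printed, omits the quantifier ``there is an $\al'\in(0,\al)$'', which must be restored as in Proposition~\ref{P2.9}.
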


\subsection{Sequential extremality, stationarity and approximate stationarity}
\label{S2.3}

In this subsection, we move closer to the sequential definitions studied in \cite{CuoKru} as well as the conventional concepts of extremality and stationarity
of a collection of sets
and consider a more traditional setting when the extremality and stationarity properties of $\{\Omega_1,\ldots,\Omega_n\}$ at a fixed sequence $\sigx$ hold for \emph{some} sequence $\siga$.
The definitions and statements below rely on the corresponding ones in Section~\ref{S2.01}.

\begin{definition}
\label{D2.11}
The collection $\{\Omega_1,\ldots,\Omega_n\}$ is
extremal (resp., globally extremal, $\al$-sta\-tio\-nary, stationary, approximately $\al$-stationary, approximately stationary) at $\sigx$ if it is extremal (resp., globally extremal, $\al$-sta\-tionary, stationary, approximately $\al$-stationary, approximately stationary) at $\sigx$ with respect to some $\siga:=\{\bold{a}^k\}\subset X^n\setminus\{0\}$ $(k\in\N)$ such that $\bold{a}^k\to0$ as $k\to+\infty$.
The corresponding properties are denoted by \Es, \GEs, \Ssxal, \Ss, \ASsxal\ and \ASs, respectively.

If $x_i^k=\bx$ for some $\bx\in\bigcap_{i=1}^n\Omega_i$ and all $i=1,\ldots,n$ and $k=1,2,\ldots$,
we say that $\{\Omega_1,\ldots,\Omega_n\}$ is extremal (resp., globally extremal, $\al$-sta\-tionary, stationary, approximately $\al$-sta\-tionary, approximately stationary) at $\bx$.
The corresponding properties are denoted by \Ep, \GEp, \Spxal, \Sp, \ASpxal\ or \ASp.
\end{definition}

The next three statements are consequences of
the corresponding ones in Section~\ref{S2.01}.

\begin{proposition}
\label{P2.12}
\GEs\ $\Rightarrow$ \Es\ $\Rightarrow$ \Ss\ $\Rightarrow$ \ASs, and \Ssxal\ $\Rightarrow$ \ASsxal\; for any ${\al>0}$.
In particular, if $\bx\in\bigcap_{i=1}^n\Omega_i$, then\\
\GEp\ \folgt \Ep\ \folgt \Sp\ \folgt \ASp, and
\Spxal~\folgt \ASpxal\; for any $\al>0$.
\end{proposition}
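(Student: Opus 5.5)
The plan is to deduce Proposition~\ref{P2.12} directly from the unnumbered proposition in Section~\ref{S2.01} that follows Remark~\ref{R2.02}. That proposition states that $\rho(\sigx;\siga)>0$ implies $\al(\sigx;\siga)=0$, and that $\widetilde\al(\sigx;\siga)\le\al(\sigx;\siga)$. The only extra ingredient is the observation that every property in Definition~\ref{D2.11} is witnessed by a single sequence of translations. So one witness sequence $\siga$ can be carried through the whole chain of implications.

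Fix $\sigx\subset\widehat\Omega$.

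\begin{enumerate}
\item Suppose \GEs\ holds. Then there is a sequence $\siga\subset X^n\setminus\{0\}$ with $\bold{a}^k\to0$ such that $\rho(\sigx;\siga)=+\infty$. In particular $\rho(\sigx;\siga)>0$, so \Esa\ holds, and hence \Es.
\item Suppose \Es\ holds, witnessed by $\siga$, so that $\rho(\sigx;\siga)>0$. Part (i) of the cited proposition gives $\al(\sigx;\siga)=0$. This is \Ssa, and hence \Ss\ holds with the same $\siga$.
\item Suppose \Ss\ holds, witnessed by $\siga$, so that $\al(\sigx;\siga)=0$. Part (ii) of the cited proposition gives $0\le\widetilde\al(\sigx;\siga)\le\al(\sigx;\siga)=0$. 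This is \ASsa, and hence \ASs.
\item Fix $\al>0$ and suppose \Ssxal\ holds, witnessed by $\siga$, so that $\al(\sigx;\siga)<\al$. Part (ii) gives $\widetilde\al(\sigx;\siga)\le\al(\sigx;\siga)<\al$. This is \ASsaal, and hence \ASsxal.
\end{enumerate}

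For the `at-a-point' statements, take $\bx\in\bigcap_{i=1}^n\Omega_i$ and specialise to the constant sequence $x^k_i:=\bx$ for all $i$ and $k$. This sequence lies in $\widehat\Omega$. By Definition~\ref{D2.11}, the properties \GEp, \Ep, \Sp, \ASp, \Spxal\ and \ASpxal\ are exactly \GEs, \Es, \Ss, \ASs, \Ssxal\ and \ASsxal\ for this sequence. The implications therefore follow from the general case.

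There is essentially no obstacle in this argument. The one point to check is that Definition~\ref{D2.11} quantifies existentially over the same class of admissible sequences $\siga$ for every property: nonzero terms tending to zero. Because of this, the witness from each hypothesis is admissible for each conclusion, and no new sequence has to be constructed. In step~2, $\al(\sigx;\siga)$ is well defined because of the standing assumption $\bold{a}^k\ne0$ (cf. Remark~\ref{R2.02}\eqref{R2.02.2}).
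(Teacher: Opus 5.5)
Your proposal is correct and is essentially the paper's own argument. The paper simply states that Proposition~\ref{P2.12} is a consequence of the corresponding proposition in Section~\ref{S2.01}, which says $\rho(\sigx;\siga)>0\Rightarrow\al(\sigx;\siga)=0$ and $\widetilde\al(\sigx;\siga)\le\al(\sigx;\siga)$, applied to a witnessing sequence $\siga$ from Definition~\ref{D2.11}, with the at-a-point case obtained by taking constant sequences, exactly as you do.
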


\begin{proposition}
Let {\rm\textbf{P}} stand for any of the properties: {\rm\textbf{E}, \textbf{GE}, \textbf{S$_\al$}, \textbf{S}, \textbf{AS$_\al$} or \textbf{AS}}.\\
Then \EE{P}{\sigx}{} \folgt \EE{P}{\sigxprime}{}\; for any subsequence $\sigxprime$ of $\sigx$.
\end{proposition}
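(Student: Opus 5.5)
The statement follows directly from the subsequence proposition of Section~\ref{S2.01} together with Definition~\ref{D2.11}; no new estimates are needed. Let $\sigxprime=\{\bold{x}^{k^j}\}$ with $\N\ni k^j\to+\infty$ strictly increasing, and suppose \EE{P}{\sigx}{} holds. By Definition~\ref{D2.11}, there is a sequence $\siga=\{\bold{a}^k\}\subset X^n\setminus\{0\}$ with $\bold{a}^k\to0$ such that \EE{P}{\sigx;\siga}{} holds.

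I will take the corresponding subsequence of translations, $\sigaprime:=\{\bold{a}^{k^j}\}$. Then $\{\bold{x}^{k^j},\bold{a}^{k^j}\}$ is a subsequence of $\{\bold{x}^k,\bold{a}^k\}$. Moreover $\sigaprime\subset X^n\setminus\{0\}$, and $\bold{a}^{k^j}\to0$ as $j\to+\infty$, so $\sigaprime$ is an admissible sequence of translations in the sense of Definition~\ref{D2.11}.

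Applying the earlier proposition on subsequences gives $\rho(\sigxprime;\sigaprime)\ge\rho(\sigx;\siga)$, $\al(\sigxprime;\sigaprime)\le\al(\sigx;\siga)$ and $\widetilde\al(\sigxprime;\sigaprime)\le\widetilde\al(\sigx;\siga)$. Hence \EE{P}{\sigxprime;\sigaprime}{} holds, and by Definition~\ref{D2.11} \EE{P}{\sigxprime}{} holds.

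The only point needing care is the approximate properties, where $\widetilde\al$ involves an infimum over approximating sequences $\sigu$. Any such $\sigu$ for $\sigx$ restricts along $k^j$ to a sequence $\{\bold{u}^{k^j}\}\subset\widehat\Omega$ with $\bold{u}^{k^j}-\bold{x}^{k^j}\to0$. Since a $\limsup$ along a subsequence does not exceed the full $\limsup$, we get $\al(\{\bold{u}^{k^j}\};\sigaprime)\le\al(\sigu;\siga)$, and taking the infimum yields the inequality for $\widetilde\al$. This is already contained in the cited proposition, so the proof is essentially bookkeeping.
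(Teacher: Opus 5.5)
Your proof is correct and matches the paper's route. The paper simply states that this proposition follows from the subsequence proposition of Section~\ref{S2.01} together with Definition~\ref{D2.11}, which is exactly your argument: restrict the witnessing translations to the same indices and apply the monotonicity of $\rho$, $\al$ and $\widetilde\al$ under passage to subsequences.
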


\begin{proposition}
\label{P2.14}
Let $\sigu:=\{\bold{u}^k\}\subset\widehat\Omega$ and $\bold{u}^k-\bold{x}^k\to0$ as $k\to+\infty$.
Then \EE{P}{\sigx}{} \iff \EE{P}{\sigu}{}, where {\rm\textbf{P}} stands for any of the properties: {\rm\textbf{E}, \textbf{GE}, \textbf{AS$_\al$} or \textbf{AS}}.
In particular, if $x^k_i\to\bx$ as $k\to+\infty$ for some $\bx\in\bigcap_{i=1}^n\Omega_i$ and all $i=1,\ldots,n$, then \EE{P}{\bx}{} \iff \EE{P}{\sigx}{}.
\end{proposition}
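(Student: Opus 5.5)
Proposition~\ref{P2.14} follows from Proposition~\ref{P2.6} applied property by property, together with Definition~\ref{D2.11}. The relation $\bold{u}^k-\bold{x}^k\to0$ with both sequences in $\widehat\Omega$ is symmetric, so it suffices to prove one implication \EE{P}{\sigx}{} \folgt \EE{P}{\sigu}{}; the converse follows by swapping the roles of $\sigx$ and $\sigu$.

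\emph{Extremality and global extremality.} Suppose \Es\ holds. By Definition~\ref{D2.11}, there is a $\siga=\{\bold{a}^k\}\subset X^n\setminus\{0\}$ with $\bold{a}^k\to0$ and $\rho(\sigx;\siga)>0$. Set $\bold{a}'^k:=\bold{a}^k+\bold{x}^k-\bold{u}^k$. Then $\bold{a}'^k\to0$, since both $\bold{a}^k\to0$ and $\bold{x}^k-\bold{u}^k\to0$. Moreover, $\bold{u}^k+\bold{a}'^k=\bold{x}^k+\bold{a}^k$, so Proposition~\ref{P2.6} gives $\rho(\sigu;\sigma_{\bold{a}'})=\rho(\sigx;\siga)$. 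This number is positive, and it equals $+\infty$ in the global case. The one technical point is the standing requirement $\bold{a}'^k\ne0$ for all $k$. By the remark after Proposition~\ref{P2.6}, $\bold{a}'^k\ne0$ for all sufficiently large $k$: if $\bold{a}'^k=0$, then $\zeta(\bold{u}^k)=0$ because $\bold{u}^k\in\widehat\Omega$, which is impossible for large $k$ since the $\liminf$ is positive. For the finitely many remaining indices, I would redefine $\bold{a}'^k$ to be any nonzero vector. This alters neither the $\liminf$ in \eqref{D2.01-1} nor the convergence to $0$. Hence \EE{E}{\sigu}{} (resp. \EE{GE}{\sigu}{}) holds.

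\emph{Approximate ($\al$-)stationarity.} Suppose \ASsxal\ holds with some $\siga$, that is, $\widetilde\al(\sigx;\siga)<\al$. Proposition~\ref{P2.6} gives directly $\widetilde\al(\sigu;\siga)=\widetilde\al(\sigx;\siga)<\al$, with the same translation sequence $\siga$, so no nonzero-ness issue arises. Thus \EE{AS}{\sigu}{\al} holds. The case of approximate stationarity is identical, with $\widetilde\al=0$. If one prefers not to rely on Proposition~\ref{P2.6} here, the equality follows straight from \eqref{D2.01-3}: the admissible approximating families $\{\bold{w}^k\}\subset\widehat\Omega$ satisfying $\bold{w}^k-\bold{x}^k\to0$ and satisfying $\bold{w}^k-\bold{u}^k\to0$ coincide, by the triangle inequality.

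\emph{The point case.} If $x_i^k\to\bx$ for all $i$, apply the above with the constant sequence $\bold{u}^k:=(\bx,\ldots,\bx)\in\widehat\Omega$, which satisfies $\bold{u}^k-\bold{x}^k\to0$. Then \EE{P}{\bx}{} is, by Definition~\ref{D2.11}, exactly \EE{P}{\sigu}{}, so \EE{P}{\bx}{} \iff \EE{P}{\sigx}{}. The only step needing care in the whole argument is the nonzero adjustment of $\bold{a}'^k$ in the extremality part. Everything else is a direct transcription of Proposition~\ref{P2.6}. Stationarity is deliberately excluded from the statement, in line with Example~\ref{E2.7}.
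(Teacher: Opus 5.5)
Your proposal is correct and takes the same route as the paper, which derives Proposition~\ref{P2.14} directly from the stability result in Proposition~\ref{P2.6} combined with Definition~\ref{D2.11}. You also spell out why the shifted translations are nonzero for all large $k$ (because $\zeta(\bold{u}^k)=0$ for $\bold{u}^k\in\widehat\Omega$) and how to fix the finitely many remaining terms, a detail the paper only records in the remark after Proposition~\ref{P2.6}.
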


In view of Proposition~\ref{P2.14}, the sequential extremality and approximate stationarity properties in the sense of Definition~\ref{D2.11} possess certain stability.
The next modification of Example~\ref{E2.7} shows that the
(non-approximate) stationarity lacks stability.

\begin{example}
\label{E2.16}
Let $X:=\R^2$, $\Omega_1:=\{(u,v)\in X\mid v\ge0\;\;\text{or}\;\;u+v\ge0\}$, $\Omega_2:=\{(u,v)\in X\mid v\le0\}$, and $\bx:=(0,0)$.
It was shown in Example~\ref{E2.7} that $\{\Omega_1,\Omega_2\}$ is stationary at some $\sigx$ converging to $(\bx,\bx)$ with respect to some $\siga$ converging to $0$, hence, just stationary at $\sigx$, while it is not stationary at $\bx$ with respect to that $\siga$.
It remains to check that $\{\Omega_1,\Omega_2\}$ is not stationary at $\bx$ with respect to any $\siga$ converging to $0$.

Let $a_1^k:=(a_{11}^k,a_{12}^k)$, $a_2^k:=(a_{21}^k,a_{22}^k)$, $\bold{a}^k:=(a_1^k,a_2^k)$, and $\bold{a}^k\to0$.
Then $\Omega_1-a_1^k=\{(u,v)\in\R^2\mid v\ge-a_{12}^k \;\;\text{or}\;\;u+v\ge-a_{11}^k-a_{12}^k\}$, $\Omega_2-a_2^k=\{(u,v)\in{\R^2}\mid v\le-a_{22}^k\}$ and $A^k:=(\Omega_1-a_1^k)\cap(\Omega_2-a_2^k) =\{(u,v)\in{\R^2}\mid -\max\{u,a_{11}^k\}-a_{12}^k\le v\le-a_{22}^k\}$.
If $a_{11}^k\ge a_{22}^k-a_{12}^k$, then $\zeta(\bold{a}^k)=(a_{22}^k)_+$.
Otherwise, $\zeta(\bold{a}^k)=\max\{a_{22}^k,a_{22}^k-a_{12}^k,0\}$.
Thus, $\zeta(\bold{a}^k)\le2\|\bold{a}^k\|$.
Hence, $\al(\bx;\siga)\ge1/2$, and consequently, $\{\Omega_1,\Omega_2\}$ is not stationary at $\bx$ with respect to $\siga$.
\end{example}

\if{
\NDC{17/5/26.
Let $\Omega_1:=\{(x,y)\in\R^2\mid y\le|x|\}$ and  $\Omega_2:=\{(x,y)\in\R^2\mid y\ge-|x|\}$, and $\bx:=(0,0)$.
Then one can check that $\Omega_1$ and $\Omega_2$ is not stationary at $\bx$.
Let $x^k_1:=(1/k,1/k)\in\Omega_1$, $x^k_2:=(1/k,-1/k)\in\Omega_2$, $a^k_1:=(0,1/k^2)$, $a^k_2:=(0,-1/k^2)$.
Then $A_k:=(\Omega_1-x^k_1-a^k_1)\cap (\Omega_2-x^k_2-a^k_2)=\left\{(u,v\in\R^2)\mid -\abs{u+\frac{1}{k}}+\frac{1}{k}+\frac{1}{k^2}\le v\le\abs{u+\frac{1}{k}}-\frac{1}{k}-\frac{1}{k^2}\right\}.
$
Then $d(0,A_k)=\inf_{(u,v)\in A_k}\max\{|u|,|v|\}$.
We have $\abs{u+\frac{1}{k}}=\frac{1}{k}+\frac{1}{k^2}$, or equivalently, $u=\frac{1}{k^2}$ or $u=-\frac{2}{k}-\frac{1}{k^2}$.
In these case, $v=0$.
Thus, $d(0,A_k)=\frac{1}{k^2}$.
We have $\|\bold{a}^k\|=\frac{1}{k^2}$.
Thus, $\al(\sigx;\siga)=1$.
}
}\fi

\begin{proposition}
\label{P2.15}
Let $\sigxprime$ be a subsequence of $\sigx$.
Then
\begin{enumerate}
\item
\EE{E}{\sigxprime}{}\ \iff
there is a $\rho>0$ such that, for any $\varepsilon>0$, there exist an integer $k>\eps\iv$ and points $a_1,\ldots,a_n\in\eps\B_{X}$ such that
$\zeta(\bold{x}^k+\bold{a})>\rho$, where $\bold{a}:=(a_1,\ldots,a_n)$;

\item
\label{P2.15.2}
\EE{GE}{\sigxprime}{}\ \iff
the above condition holds for any $\rho>0$;

\item
\EE{S}{\sigxprime}{\al}\ \iff
there is an $\al'\in(0,\al)$ such that, for any $\varepsilon>0$, there exist an integer $k>\eps\iv$ and points $a_1,\ldots,a_n\in{\eps}\B_{X}$ such that $\|\bold{a}\|<\al'\zeta(\bold{x}^k+\bold{a})$, where $\bold{a}:=(a_1,\ldots,a_n)$;

\item
\EE{S}{\sigxprime}{}\ \iff
for any $\varepsilon>0$, there exist an integer $k>\eps\iv$ and points $a_1,\ldots,a_n\in{\eps}\B_{X}$ such that $\|\bold{a}\|<\eps\zeta(\bold{x}^k+\bold{a})$, where $\bold{a}:=(a_1,\ldots,a_n)$;

\item
\EE{AS}{\sigxprime}{\al}\ \iff
there is an $\al'\in(0,\al)$ such that, for any $\varepsilon>0$, there exist an integer $k>\eps\iv$, and points $x_i'\in\Omega_i\cap B_\eps(x^k_i)$ and $a_i\in{\eps}\B_{X}$ $(i=1,\ldots,n)$ such that $\|\bold{a}\|<\al'\zeta(\bold{x}'+\bold{a})$, where $\bold{x}':=(x'_1,\ldots,x'_n)$ and $\bold{a}:=(a_1,\ldots,a_n)$;

\item
\EE{AS}{\sigxprime}{}\ \iff
for any $\varepsilon>0$, there exist an integer $k>\eps\iv$, and points $x_i'\in\Omega_i\cap B_\eps(x^k_i)$ and $a_i\in{\eps}\B_{X}$ $(i=1,\ldots,n)$ such that $\|\bold{a}\|<\eps\zeta(\bold{x}'+\bold{a})$, where $\bold{x}':=(x'_1,\ldots,x'_n)$ and $\bold{a}:=(a_1,\ldots,a_n)$.
\end{enumerate}
\end{proposition}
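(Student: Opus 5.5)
We prove each equivalence by passing between the statement with a fixed sequence of translations and the existential formulation of Definition~\ref{D2.11}, exactly as in the proof of Proposition~\ref{P2.9}\eqref{P2.9.1}. Proposition~\ref{P2.9} already characterises the relative properties on a subsequence of $\{\bold{x}^k,\bold{a}^k\}$. Here the translations are not given, so we either extract them from the defining sequence $\siga$ (necessity) or build them from the vectors supplied by the condition (sufficiency). We treat assertion~(i) in detail; the other five follow the same scheme, with $\zeta(\bold{x}^k+\bold{a})>\rho$ replaced by the corresponding inequality. The meaning of ``$\sigxprime$ subsequence'' on the \LHS\ is that the property holds at \emph{some} subsequence of $\sigx$, as in Proposition~\ref{P2.9}.

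\emph{Necessity.} Suppose $\{\Omega_1,\ldots,\Omega_n\}$ is extremal at $\sigxprime=\{\bold{x}^{k^j}\}$. By Definition~\ref{D2.11}, there is a sequence $\{\bold{a}'^j\}\subset X^n\setminus\{0\}$ with $\bold{a}'^j\to0$ such that $\rho(\sigxprime;\sigaprime)>0$. Fix $\rho\in(0,\rho(\sigxprime;\sigaprime))$. By \eqref{D2.01-1}, we have $\zeta(\bold{x}^{k^j}+\bold{a}'^j)>\rho$ for all large $j$. Given $\eps>0$, we pick $j$ large enough that $k:=k^j>\eps\iv$ and $\|\bold{a}'^j\|<\eps$, and set $\bold{a}:=\bold{a}'^j$. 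Since the maximum norm is used on $X^n$, this gives $a_i\in\eps\B_X$ for each $i$.

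\emph{Sufficiency.} Take $\eps:=1/j$ for $j\in\N$ and obtain $k^j>j$ and $\bold{a}^j$ with $\|\bold{a}^j\|<1/j$ and $\zeta(\bold{x}^{k^j}+\bold{a}^j)>\rho$. To make $\{k^j\}$ strictly increasing, we apply the condition with $\eps:=\min\{1/j,1/k^{j-1}\}$ instead. We must also have $\bold{a}^j\ne0$, as required by the standing assumption; for (i) and (ii) this is automatic, as noted in Remark~\ref{R2.02}\eqref{R2.02.2}. Indeed, if $\bold{a}^j=0$ we would get $\zeta(\bold{x}^{k^j})=0$ because $\bold{x}^{k^j}\in\widehat\Omega$, contradicting $\zeta>\rho$. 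Then $\sigxprime:=\{\bold{x}^{k^j}\}$ and $\sigaprime:=\{\bold{a}^j\}$ satisfy $\rho(\sigxprime;\sigaprime)\ge\rho>0$, so the property holds at $\sigxprime$.

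For (ii), we run the same argument for every $\rho>0$. In the sufficiency part we diagonalise by using $\rho:=j$ at step $j$, which gives $\rho=+\infty$.

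For (iii)--(vi), nonvanishing of the translations also comes for free. The strict inequality $\|\bold{a}\|<\al'\zeta(\bold{x}^k+\bold{a})$ forces $\zeta(\bold{x}^k+\bold{a})>0$. If $\bold{a}=0$ then $\zeta(\bold{x}^k)=0$, so the inequality would fail, and the same reasoning applies with $\bold{x}'\in\widehat\Omega$. Necessity is handled with $\al'$ strictly between $\al(\sigxprime;\sigaprime)$ and $\al$, and for (iv) with any $\al'=\eps$.

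The approximate versions (v) and (vi) are the main obstacle, because of the infimum over approximating sequences $\sigu$ in \eqref{D2.01-3}.
\begin{itemize}
\item For necessity, we first choose $\sigu$ with $\bold{u}^j-\bold{x}^{k^j}\to0$ and $\al(\sigu;\sigaprime)<\al''<\al$, and take $\al'\in(\al'',\al)$. For large $j$ we then have both $\|\bold{u}^j-\bold{x}^{k^j}\|<\eps$ and the strict inequality, and we set $\bold{x}':=\bold{u}^j$.
\item For sufficiency, the points $\bold{x}'^j$ chosen at step $j$ with $\eps=1/j$ form a sequence $\sigu\subset\widehat\Omega$ with $\bold{u}^j-\bold{x}^{k^j}\to0$. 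It satisfies $\al(\sigu;\sigaprime)\le\al'<\al$, hence $\widetilde\al(\sigxprime;\sigaprime)<\al$. For (vi), the corresponding limsup is $\le\lim 1/j=0$.
\end{itemize}
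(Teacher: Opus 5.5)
Your proposal is correct and follows the paper's route. The paper proves all six assertions by the argument given for Proposition~\ref{P2.9}\eqref{P2.9.1}: for necessity, take a late term of the subsequence together with the corresponding term of the defining translation sequence; for sufficiency, apply the condition with $\varepsilon=1/j$ to build indices $k^j>j$ with translations of norm less than $1/j$. That is exactly your scheme, and your extra details are all sound: strictly increasing indices, non-vanishing of the constructed translations, the diagonal choice $\rho=j$ for global extremality, and, for the approximate properties, choosing the approximating sequence before fixing $\alpha'$ in (v) but after fixing $\varepsilon$ in (vi), since the infimum need not be attained.
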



The proofs of all the assertions in the above proposition use standard arguments already demonstrated in the proof of Proposition~\ref{P2.9}\eqref{P2.9.1}.
If $x_i^k=\bx$ for some $\bx\in\bigcap_{i=1}^n\Omega_i$ and all $i=1,\ldots,n$ and $k=1,2,\ldots$, the statement takes a simpler form and does not involve subsequences.

\begin{corollary}
\label{C2.16}
Let $\bx\in\bigcap_{i=1}^n\Omega_i$.
\begin{enumerate}
\item
\Ep\ \iff
there is a $\rho>0$ such that,
for any $\eps>0$, there exist $a_1,\ldots,a_n\in\eps\B_{X}$ such that
$d\big(\bx,\bigcap_{i=1}^n(\Omega_i-a_i)\big)>\rho$;

\item
\GEp\ \iff
the above condition holds for any $\rho>0$;

\item
\Spxal\ \iff
there is an $\al'\in(0,\al)$ such that,
for any $\eps>0$, there exist points $a_1,\ldots,a_n\in{\eps}\B_X$ such that $\max_{1\le i\le n}\|a_i\|<\al' d\big(\bx,\bigcap_{i=1}^n(\Omega_i-a_i)\big)$;

\item
\Sp\ \iff
for any $\varepsilon>0$, there exist points $a_1,\ldots,a_n\in{\eps}\B_X$ such that $\max_{1\le i\le n}\|a_i\|<\eps d\big(\bx,\bigcap_{i=1}^n(\Omega_i-a_i)\big)$;

\item
\ASpxal\ \iff
there is an $\al'\in(0,\al)$ such that, for any $\varepsilon>0$, there exist points $x_i'\in\Omega_i\cap B_\eps(\bx)$ and $a_i\in{\eps}\B_{X}$ $(i=1,\ldots,n)$ such that $\|\bold{a}\|<\al'\zeta(\bold{x}'+\bold{a})$, where $\bold{x}':=(x'_1,\ldots,x'_n)$ and $\bold{a}:=(a_1,\ldots,a_n)$;

\item
\ASp\ \iff
for any $\varepsilon>0$, there exist points $x_i'\in\Omega_i\cap B_\eps(\bx)$ and $a_i\in{\eps}\B_{X}$ $(i=1,\ldots,n)$ such that $\|\bold{a}\|<\eps\zeta(\bold{x}'+\bold{a})$, where $\bold{x}':=(x'_1,\ldots,x'_n)$ and $\bold{a}:=(a_1,\ldots,a_n)$.
\end{enumerate}
\end{corollary}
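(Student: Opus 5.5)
This corollary is Proposition~\ref{P2.15} specialised to the constant sequence $x_i^k:=\bx$ $(i=1,\ldots,n,\ k\in\N)$. I would deduce it from that proposition rather than redo the subsequence arguments. Two observations drive the reduction. First, every subsequence $\sigxprime$ of a constant sequence is the same constant sequence, so \EE{P}{\sigxprime}{} in Proposition~\ref{P2.15} is \EE{P}{\bx}{} for each property {\bf P}. Second, $\bold{x}^k$ does not depend on $k$, so the requirement ``there exists an integer $k>\eps\iv$'' costs nothing and can be deleted. Throughout I would use $\zeta(\bold{x}^k+\bold{a})=d\big(0,\bigcap_{i=1}^n(\Omega_i-\bx-a_i)\big)=d\big(\bx,\bigcap_{i=1}^n(\Omega_i-a_i)\big)$, which converts the conditions in parts (i)--(iv) into the distance form stated in the corollary. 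For parts (v)--(vi) the sets $\Omega_i\cap B_\eps(x_i^k)$ become $\Omega_i\cap B_\eps(\bx)$, and $\zeta(\bold{x}'+\bold{a})$ stays as it is.

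Since the first reduction is slightly informal, I would also check one part directly, say (i), by copying the proof of Proposition~\ref{P2.9}\eqref{P2.9.1}.

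For ``$\Rightarrow$'': take $\siga$ with $\rho(\bx;\siga)>0$ and fix $\rho\in(0,\rho(\bx;\siga))$. For each $\eps>0$, choose $k$ large enough that both $\|\bold{a}^k\|<\eps$ and $\zeta(\bold{a}^k)>\rho$ hold, and set $a_i:=a_i^k$.

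For ``$\Leftarrow$'': for each $j\in\N$ pick $\bold{a}^j$ with $\|\bold{a}^j\|<1/j$ and $d\big(\bx,\bigcap_{i=1}^n(\Omega_i-a_i^j)\big)>\rho$. Then $\bold{a}^j\to0$ and $\rho(\bx;\siga)\ge\rho>0$.

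The only point needing care is the standing requirement $\bold{a}^j\ne0$ in Definition~\ref{D2.11}. In (i)--(ii) it is automatic: if $\bold{a}^j=0$, the distance would be $d(\bx,\bigcap_{i=1}^n\Omega_i)=0$, contradicting the choice. In (iii)--(vi) it is likewise automatic, because the strict inequality $\|\bold{a}\|<\al'\zeta(\cdot)$ (or $<\eps\zeta(\cdot)$) would fail for $\bold{a}=0$ whenever $\zeta$ is finite. If $\zeta=+\infty$ with $\bold{a}=0$, I would perturb $\bold{a}$ to a small nonzero vector.

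For the $\al$-parts (iii) and (v), the key step is passing between the limsup and the fixed constant $\al'$. For ``$\Rightarrow$'', pick $\al'\in(\al(\bx;\siga),\al)$ (respectively $\al'\in(\widetilde\al(\bx;\siga),\al)$); then the ratio is below $\al'$ for all large $k$. For ``$\Leftarrow$'', build a sequence with ratio $<\al'$, which gives $\limsup\le\al'<\al$. In part (v) there is one extra step: I would collect the points $x_i'$ chosen for $\eps=1/j$ into a sequence $\sigu$ with $\bold{u}^j\to(\bx,\ldots,\bx)$, which bounds $\widetilde\al(\bx;\siga)\le\al(\sigu;\siga)$.

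I do not expect a real obstacle here. The only delicate point is the bookkeeping that guarantees $\bold{a}\ne0$ in the converse constructions.
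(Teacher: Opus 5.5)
Your proposal is correct and matches the paper, which obtains this corollary simply as the specialisation of Proposition~\ref{P2.15} to the constant sequence $x_i^k:=\bx$ (so that subsequences and the requirement $k>\eps\iv$ become vacuous), the underlying arguments being exactly those of Proposition~\ref{P2.9}\eqref{P2.9.1} that you reproduce. One simplification: the case $\zeta=+\infty$ with $a_1=\ldots=a_n=0$, which you propose to handle by perturbation, never occurs, because $0\in\bigcap_{i=1}^n(\Omega_i-x_i')$ whenever $x_i'\in\Omega_i$ (with $x_i':=\bx$ in parts (i)--(iv)); hence the strict inequalities in all six parts force the translations to be nonzero automatically, and the perturbation step, which would need justification anyway, can be dropped.
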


\begin{remark}
In view of Proposition~\ref{P2.15}, in the particular case when $x_i^1=x_i^2=\ldots$ $(i=1,\ldots,n)$, properties \Es, \Ss\ and \ASs\ reduce to the corresponding `relative' properties in \cite[Definition~3]{BuiKru18} (studied there for $n=2$).
If all these points coincide (the case covered by Corollary~\ref{C2.16}),
properties \Ep, \Sp\ and \ASp\ are equivalent to the corresponding extremality, stationarity and approximate stationarity properties in Definition~\ref{D1.1}.
\end{remark}

In the convex case, the statement of Proposition~\ref{P2.12} can be partially reversed.

\begin{proposition}
\label{P2.16}
Let $\Omega_1,\ldots,\Omega_n$ be convex.
Then
\GEs\ \iff \Es\ \iff \Ss\ \iff \ASs.

In particular, if $\bx\in\bigcap_{i=1}^n\Omega_i$, then
\GEp\ \iff \Ep\ \iff \Sp\ \iff \ASp.
\end{proposition}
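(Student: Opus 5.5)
By Proposition~\ref{P2.12}, the chain \GEs\ \folgt \Es\ \folgt \Ss\ \folgt \ASs\ holds without convexity. So the plan is to prove only \ASs\ \folgt \GEs\ for convex sets. The point case follows by taking $x_i^k:=\bx$. I would work directly with Definitions~\ref{D2.01} and \ref{D2.11} on the full sequence, rather than through the subsequence characterisations in Proposition~\ref{P2.15}, because the latter only yield properties on subsequences.

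The key tool is a scaling lemma that uses convexity. Let $\bold{u}=(u_1,\ldots,u_n)\in\widehat\Omega$, $\bold{a}\in X^n$ and $t\ge1$. If $z\in\bigcap_{i=1}^n(\Omega_i-u_i-t a_i)$, then, since $0\in\Omega_i-u_i$ for all $i$ and each $\Omega_i-u_i$ is convex,
\begin{gather*}
\tfrac1t z=\tfrac1t z+\big(1-\tfrac1t\big)0\in\bigcap_{i=1}^n(\Omega_i-u_i-a_i).
\end{gather*}
Hence $\zeta(\bold{u}+t\bold{a})\ge t\,\zeta(\bold{u}+\bold{a})$. In particular, if the intersection for $\bold{a}$ is empty, so is the one for $t\bold{a}$, in line with the convention $d(0,\emptyset)=+\infty$. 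The lemma says that enlarging the translations increases the distance at least proportionally.

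Now assume \ASs. Then there is a $\siga=\{\bold{a}^k\}$ with $\bold{a}^k\to0$, $\bold{a}^k\ne0$, and $\widetilde\al(\sigx;\siga)=0$. The quantity in \eqref{D2.01-3} is an infimum, so for each $j\in\N$ there is a $\sigu^{(j)}\subset\widehat\Omega$ with $\bold{u}^{(j)k}-\bold{x}^k\to0$ and $\al(\sigu^{(j)};\siga)<1/j$. A diagonal argument then gives a single sequence $\sigu=\{\bold{u}^k\}\subset\widehat\Omega$ with $\bold{u}^k-\bold{x}^k\to0$ and
\begin{gather*}
\gamma_k:=\|\bold{a}^k\|/\zeta(\bold{u}^k+\bold{a}^k)\to0.
\end{gather*}
To build it, pick increasing indices $k_j$ beyond which both $\|\bold{u}^{(j)k}-\bold{x}^k\|<1/j$ and the ratio is below $1/j$, and use $\sigu^{(j)}$ on $[k_j,k_{j+1})$. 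I expect this diagonalisation, and keeping the bookkeeping consistent with the limsup in the definition, to be the main technical obstacle, although it is routine.

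With $\sigu$ in hand, set $s_k:=\max\{\|\bold{a}^k\|,\gamma_k^{1/2}\}\to0$ and $t_k:=s_k/\|\bold{a}^k\|\ge1$. By the lemma,
\begin{gather*}
\zeta(\bold{u}^k+t_k\bold{a}^k)\ge t_k\,\zeta(\bold{u}^k+\bold{a}^k)=s_k/\gamma_k\ge\gamma_k^{-1/2}\to+\infty,
\end{gather*}
and this holds trivially when $\gamma_k=0$, since then the intersection is empty. Define $\bold{b}^k:=\bold{u}^k-\bold{x}^k+t_k\bold{a}^k$. Then $\|\bold{b}^k\|\le\|\bold{u}^k-\bold{x}^k\|+s_k\to0$ and $\zeta(\bold{x}^k+\bold{b}^k)=\zeta(\bold{u}^k+t_k\bold{a}^k)\to+\infty$. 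For large $k$ we also have $\bold{b}^k\ne0$, because $\bold{x}^k\in\widehat\Omega$ gives $\zeta(\bold{x}^k)=0$. Discarding finitely many early terms, or replacing them by arbitrary nonzero vectors, does not affect the liminf. Hence $\rho(\sigx;\{\bold{b}^k\})=+\infty$, which is \GEs.
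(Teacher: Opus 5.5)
Your proof is correct. It rests on the same convexity inequality as the paper's, but it is organised differently.

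The paper argues by contradiction. If \GEs\ fails, it takes $\rho,\eps>0$ with $\zeta(\bold{x}^k+\bold{a})\le\rho$ for all small $\bold{a}$. It applies this to the inflated translation $\bold{u}^k-\bold{x}^k+\bold{a}^k/t$ with $t=\|\bold{a}^k\|/(\de\rho)<1$, and shrinks back by convexity to get $\zeta(\bold{u}^k+\bold{a}^k)\le\|\bold{a}^k\|/\de$. This gives a lower bound $\widetilde\al(\sigx;\siga)\ge\de$ valid for \emph{every} admissible $\siga$ and $\sigu$ at once.

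You argue directly. A diagonal extraction gives one $\sigu$ with $\gamma_k\to0$. Inflating $\bold{a}^k$ by $t_k\ge1$ drives $\zeta$ to $+\infty$ via $\zeta(\bold{u}+t\bold{a})\ge t\,\zeta(\bold{u}+\bold{a})$. Finally $\bold{u}^k-\bold{x}^k$ is absorbed into the translation, which is the mechanism behind Proposition~\ref{P2.6}.

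What each route buys:
\begin{itemize}
\item Your route costs the diagonalisation but yields an explicit witness $\{\bold{b}^k\}$ for \GEs\ on the full sequence $\sigx$.
\item As you anticipated, it also bypasses Proposition~\ref{P2.15}. In the paper, negating \GEs\ strictly gives the bound on $\zeta(\bold{x}^k+\bold{a})$ only for infinitely many $k$, not for all $k>\eps\iv$. This is harmless there because $\al(\cdot;\cdot)$ is a $\limsup$, but your argument needs no such repair.
\item The paper's route needs no diagonalisation. Because its bound is uniform in $\siga$, it also shows that failure of \GEs\ rules out \ASsxal\ for every $\al\le\de$.
\end{itemize}

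One small slip in your scaling lemma: the displayed identity does not show the inclusion. The convex combination to write is $\tfrac1t(z+ta_i)+(1-\tfrac1t)\,0=\tfrac1t z+a_i\in\Omega_i-u_i$, using $z+ta_i\in\Omega_i-u_i$ and $0\in\Omega_i-u_i$.
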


\begin{proof}
Thanks to Proposition~\ref{P2.12}, we only need to show that \ASs\ \folgt \GEs.
Suppose that $\{\Omega_1,\ldots,\Omega_n\}$ is not globally extremal at $\sigx$.
By Proposition~\ref{P2.15}\eqref{P2.15.2}, there exist numbers $\rho>0$ and $\eps>0$ such that
\begin{gather}
\label{P2.16P1}
d\Big(0,\bigcap_{i=1}^n(\Omega_i-x_i^k-a_i)\Big)\le\rho
\end{gather}
for all integers $k>\eps\iv$ and all $a_1,\ldots,a_n\in\eps\B_X$.
Fix a $\de\in(0,\min\{\rho,\varepsilon/(1+\rho)\})$.

Let $\bold{u}^k:=(u^k_1,\ldots,u^k_n)\in\widehat\Omega$, $\bold{a}^k:=(a^k_1,\ldots,a^k_n)\in X^n\setminus\{0\}$, $\bold{u}^k-\bold{x}^k\to0$ and $\bold{a}^k\to0$ as $k\to+\infty$.
Choose a $k_0>\de\iv$ $(>\eps\iv)$ so that $\|\bold{u}^k-\bold{x}^k\|<\de$ and $\|\bold{a}^k\|<\de^2$ for all integers $k>k_0$.
Take any integer $k>k_0$, and set $t:=\|\bold{a}^k\|/(\de\rho)$.
Then $0<t<\de/\rho<1$ and
$\|\bold{u}^k-\bold{x}^k+\bold{a}^k/t\|<\de(1+\rho) {<\eps}$.
By \eqref{P2.16P1}, $\zeta(\bold{u}^k+\bold{a}^k/t)\le\rho$, and consequently,
there is an $x\in\rho\overline\B_X$ such that
$x+u_i^k+a_i^k/t\in\Omega_i$ $(i=1,\ldots,n)$.
Since the sets are convex, we have $t(x+u_i^k+a_i^k/t)+(1-t)u_i^k\in\Omega_i$, or equivalently, $tx+u_i^k+a_i^k\in\Omega_i$ $(i=1,\ldots,n)$, i.e., $tx\in\bigcap_{i=1}^n\big(\Omega_i-u_i^k-a_i^k\big)$.
Hence,
$\zeta(\bold{u}^k+\bold{a}^k)\le t\|x\|\le t\rho=\|\bold{a}^k\|/\de$, and consequently, $\|\bold{a}^k\|/\zeta(\bold{u}^k+\bold{a}^k)\ge\de$
for all $k>k_0$.
By Definitions~\ref{D2.01} and \ref{D2.11}, $\{\Omega_1,\ldots,\Omega_n\}$ is not approximately stationary at~$\sigx$.
\qed\end{proof}

\begin{remark}
\label{R2.16}
As discussed in the Introduction,
the sequential Definition~\ref{D2.11} (together with Definition~\ref{D2.01} which it builds upon) partially extend and improve \cite[Definitions~2.1 and~3.1]{CuoKru} along several lines.
\begin{enumerate}
\item
The model adopted in the current paper does not assume condition \eqref{D1.3-1} as it is superfluous when proving dual necessary conditions.
\item
Ignoring the superfluous condition \eqref{D1.3-1}, thanks to Proposition~\ref{P2.15}, extremality and stationarity at $\{x_i^k\}$ $(i=1,\ldots,n)$
in the sense of Definition~\ref{D1.3} are equivalent to the corresponding properties in the sense of Definition~\ref{D2.11} at some subsequence of $\sigx$.
Thus, the definitions adopted in this paper
are more specific in identifying the sequences relevant for the properties.
\item
The definition of approximate stationarity in part \eqref{D2.01.3} of Definition~\ref{D2.01} differs from that in Definition~\ref{D1.3}\eqref{D1.3.3}.
Nevertheless, they are largely equivalent up to taking subsequences (ignoring the superfluous condition \eqref{D1.3-1}).
\end{enumerate}
In view of the above observations,
Propositions~\ref{P2.14} and \ref{P2.16} extend \cite[Proposition~2.4(i) and (iii), and Proposition~2.7]{CuoKru}, respectively.
Example~\ref{E2.16} shows, in particular, that the claim in \cite[Proposition~2.4(ii)]{CuoKru} is not entirely correct.
Many examples provided in \cite{CuoKru} are applicable for illustrating the properties in Definition~\ref{D2.11}.
\end{remark}

\section{Generalised separation}
\label{S3}

In this section, we establish dual necessary (generalised separation) conditions for the sequential extremality and stationarity properties discussed in Section~\ref{S2}.
The underlying space $X$ is assumed here to be Banach, and the sets $\Omega_1,\ldots,\Omega_n$ closed (unless stated otherwise).

The dual conditions are formulated in terms of normal cones to the sets involved.
Recall our standing convention that the normal cone generic notation $N$ stands for $N^C$ if $X$ is a general Banach space, and $N^F$ if $X$ is Asplund.
Thus, the definition of the \GSsal\ property below covers both cases.

In the statements, we employ certain generalised separation conditions.
Let $x^k_i\in\Omega_i$,
$x_i^{*k}\in N_{\Omega_i}(x_i^k)$
($i=1,\ldots,n$, $k=1,2,\ldots$),
and $\al>0$.

\begin{enumerate}[leftmargin=11mm]
\item
[\GSsal]
\emph{Generalised $\al$-separation}:
$\limsup_{k\to+\infty} \|{\sum_{i=1}^nx^{*k}_i}\|<\al$ and
$\sum_{i=1}^n\|x_i^{*k}\|=1$ $(k\in\N)$.
\end{enumerate}
If
$\lim_{k\to+\infty} \sum_{i=1}^nx^{*k}_i=0$, we talk about the \emph{generalised separation} and write \GSs.

The generalised separation conditions can be complemented by a certain auxiliary primal-dual condition connecting the dual variables (normals) in the generalised separation conditions and the corresponding primal variables.
Let
$\bold{z}^k\in X^n$, $\bold{x}^{*k}\in(X^*)^n$ and $\|\bold{x}^{*k}\|=1$ $(k\in\N)$.

\begin{enumerate}[leftmargin=9mm]
\item
[\PDs]
\emph{Complimentary primal-dual condition}:
$\bold{z}^k\ne0$ {for all
large} ${k\in\N}$, and
$\lim\limits_{k\to+\infty}\frac{\ang{\bold{x}^{*k},\bold{z}^k}} {\|\bold{z}^k\|}=1$.
\end{enumerate}

\begin{remark}
Condition \PDs\ roughly says that the primal unit vector $\bold{z}^k/\|\bold{z}^k\|$ and dual unit vector $\bold{x}^{*k}$ are getting almost `co-linear' as $k\to+\infty$.
More precisely, it is not difficult to show that, if $X$ is Hilbert, then $\big\|\frac{\bold{z}^k}{\|\bold{z}^k\|}-\bold{x}^{*k}\big\|\to0$ as $k\to+\infty$.
\end{remark}

\begin{theorem}
\label{T3.1}
Let $\al>0$.
Suppose that $\{\Omega_1,\ldots,\Omega_n\}$ is approximately $\al$-sta\-tionary (particularly, $\al$-sta\-tionary)
at $\sigxz:=\{\bold{x}^k_0\}\subset\widehat\Omega$.
Then there exist $\bold{x}^k:=(x^k_1,\ldots,x^k_n)$ and
$x_i^{*k}\in N_{\Omega_i}(x_i^k)$
$(i=1,\ldots,n,\; k\in\N)$ such that $\bold{x}^k-\bold{x}^k_0\to0$ as $k\to+\infty$, and condition \GSsal\ holds true.

Furthermore, if the assumed property holds with respect to some $\siga:=\{\bold{a}^k\}\subset X^n\setminus\{0\}$, $\bold{a}^k:=(a^k_1,\ldots,a^k_n)$ $(k\in\N)$ such that $\bold{a}^k\to0$ as $k\to+\infty$, then there exist $\bold{u}^k:=(u^k_1,\ldots,u^k_n)\in\widehat\Omega$ and $v^k\in X$ $(k\in\N)$
such that $\bold{u}^k-\bold{x}^k_0\to0$ ($\bold{u}^k=\bold{x}^k_0$ if $\{\Omega_1,\ldots,\Omega_n\}$ is $\al$-sta\-tionary at $\sigxz$ with respect to $\siga$), $v^k\to0$ as $k\to+\infty$, and condition \PDs\ is satisfied with $\bold{x}^{*k}:=(x^{*k}_1,\ldots,x^{*k}_n)$,
\begin{gather}
\label{T3.1-2}
z_i^k:=v^k-x_i^k+u_i^k+a_i^k\;\; (i=1,\ldots,n)\AND
\bold{z}^k:=(z^k_1,\ldots,z^k_n)\;\; (k\in\N).
\end{gather}
\end{theorem}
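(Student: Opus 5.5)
The plan is to apply the generalised separation Theorem~\ref{T1.5} once for every sufficiently large $k$, to the translated sets $\Omega_i-u_i^k-a_i^k$, and then read off the three conclusions from its output. First I unpack the hypothesis. By Definitions~\ref{D2.11} and~\ref{D2.01}\eqref{D2.01.3}, there is a sequence $\siga$ with $\bold a^k\to0$ and a sequence $\sigu=\{\bold u^k\}\subset\widehat\Omega$ with $\bold u^k-\bold x^k_0\to0$ such that $\al(\sigu;\siga)<\al'<\al$ for some $\al'$. If the property is (non-approximate) $\al$-stationarity with respect to $\siga$, I take $\bold u^k:=\bold x_0^k$. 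Setting $\zeta_k:=\zeta(\bold u^k+\bold a^k)$, this gives $\|\bold a^k\|<\al'\zeta_k$ for all large $k$. In particular $\zeta_k>0$.

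Next, for each such $k$, I choose the radius $\rho_k$ and the separation data. Pick
\[
\rho_k\in\big(\|\bold a^k\|/\al',\ \zeta_k\big),
\]
and additionally require $\rho_k\le\|\bold a^k\|^{1/2}+\|\bold a^k\|/\al'$, so that $\rho_k\to0$. This is possible because $\zeta_k>\|\bold a^k\|/\al'$. The sets $\Omega_i':=\Omega_i-u_i^k-a_i^k$ are closed, and since $\rho_k<\zeta_k$ we have $\bigcap_i\Omega_i'\cap\rho_k\overline\B_X=\emptyset$. The points $x_{0i}:=-a_i^k\in\Omega_i'$ satisfy $\|x_{0i}\|\le\|\bold a^k\|<\eps_k$ for any $\eps_k>\|\bold a^k\|$. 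I take $\eps_k:=\|\bold a^k\|(1+1/k)$, which keeps $\eps_k/\rho_k<\al'(1+1/k)$, and I take $\de_k:=\max\{\|\bold a^k\|^{1/2},1/k\}\to0$ (with $\de_k>0$).

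I then apply Theorem~\ref{T1.5}\eqref{T1.5-1} in the Banach case, or part~\eqref{T1.5-2} with $\tau=\tau_k:=1-1/k$ in the Asplund case. This yields $w_i\in\Omega_i'\cap B_{\de_k}(-a_i^k)$, functionals $y_i^*$ and $v^k\in\rho_k\B_X$. Define $x_i^k:=w_i+u_i^k+a_i^k\in\Omega_i$. Then $\|x_i^k-u_i^k\|<\de_k$, so $\bold x^k-\bold x_0^k\to0$, and also $v^k\to0$. The normal cone is translation invariant, $N_{\Omega_i'}(w_i)=N_{\Omega_i}(x_i^k)$, so $d(y_i^*,N_{\Omega_i}(x_i^k))<\eps_k/\de_k\to0$ because $\eps_k/\de_k\le2\|\bold a^k\|^{1/2}$. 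The remaining step is to project: take $x_i^{*k}\in N_{\Omega_i}(x_i^k)$ close to $y_i^*$ and renormalise so that $\sum_i\|x_i^{*k}\|=1$. The errors vanish, and $\rho_k\|\sum y_i^*\|<\eps_k$ gives $\limsup\|\sum_i x_i^{*k}\|\le\al'<\al$, which is \GSsal.

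For \PDs, note $v^k-w_i=v^k-x_i^k+u_i^k+a_i^k=z_i^k$, exactly as in \eqref{T3.1-2}. Theorem~\ref{T1.5} gives $\sum_i\ang{y_i^*,z_i^k}\ge\tau_k\|\bold z^k\|$. Since $v^k\notin\bigcap_i\Omega_i'$, we have $\bold z^k\ne0$. After replacing $y_i^*$ by the normalised $x_i^{*k}$, the ratio changes by at most $\sum_i\|x_i^{*k}-y_i^*\|\to0$, and it is bounded above by $1$. Hence $\ang{\bold x^{*k},\bold z^k}/\|\bold z^k\|\to1$, where the pairing is the sum pairing on $X^n$ (dual to the max norm, so $\|\bold x^{*k}\|=\sum_i\|x_i^{*k}\|=1$).

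The main obstacle is the calibration of $\eps_k,\rho_k,\de_k$. The constraint $\|x_{0i}\|<\eps_k$ forces $\eps_k\ge\|\bold a^k\|$. At the same time $\eps_k/\rho_k$ must stay below $\al$, which relies exactly on the stationarity ratio, and $\eps_k/\de_k$ must tend to $0$ while $\de_k\to0$. The choice $\de_k\approx\|\bold a^k\|^{1/2}$ reconciles these requirements.
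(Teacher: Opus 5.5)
Your proposal is correct and takes essentially the same route as the paper. For each large $k$ you apply Theorem~\ref{T1.5} to the translated sets $\Omega_i-u_i^k-a_i^k$ with $x_{0i}:=-a_i^k$ and $\de_k$ of order $\|\bold{a}^k\|^{1/2}$, replace the $y_i^*$ by nearby normalised normals, and pass to the limit in the two estimates; the only differences are cosmetic calibrations (the paper takes $\rho^k:=\|\bold{a}^k\|/\al'$ and $\eps^k:=\|\bold{a}^k\|+\min\{\|\bold{a}^k\|^2,\rho^k(\al''-\al')\}$ with $\al''\in(\al',\al)$). As the paper does, you should also fill in the data for the finitely many small $k$ so that $\sum_{i=1}^n\|x_i^{*k}\|=1$ holds for every $k\in\N$.
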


\begin{proof}
By Definition~\ref{D2.11}, $\{\Omega_1,\ldots,\Omega_n\}$ is approximately $\al$-sta\-tionary at $\sigxz$ with respect to some $\siga:=\{\bold{a}^k\}\subset X^n\setminus\{0\}$, $\bold{a}^k:=(a^k_1,\ldots,a^k_n)$ $(k\in\N)$ such that $\bold{a}^k\to0$ as $k\to+\infty$.
By Definition~\ref{D2.01}\eqref{D2.01.3}, there exist an $\al'\in(0,\al)$, a sequence $\bold{u}^k:=(u^k_1,\ldots,u^k_n)\in\widehat\Omega$ $(k\in\N)$, and a $k_0\in\N$ such that $\bold{u}^k-\bold{x}^k_0\to0$ as $k\to+\infty$ and, for all integers $k>k_0$, it holds $d(0,\bigcap_{i=1}^n\Omega'^k_i)>\rho^k$, where $\Omega'^k_i:=\Omega_i-u^k_i-a^k_i$ and $\rho^k:=\|\bold{a}^k\|/\al'$.
If $\{\Omega_1,\ldots,\Omega_n\}$ is $\al$-sta\-tionary at $\sigxz$ with respect to $\siga$, then, in view of Definition~\ref{D2.01}\eqref{D2.01.2},
$\bold{u}^k=\bold{x}^k_0$ $(k\in\N)$.
Hence,
\begin{gather}
\label{T3.1P0}
\bigcap_{i=1}^n \Omega'^k_i\cap(\rho^k\overline\B_X)=\emptyset.
\end{gather}
Denote $t^k:=\|\bold{a}^k\|$.
Taking a larger $k_0$ if necessary, we assume without loss of generality that $(1+t^k)(t^k)^{\frac12}<1$ for all integers $k>k_0$.

Fix an integer $k>k_0$.
Choose an $\al''\in(\al',\al)$.
Set
$\eps^k:=t^k+\min\{(t^k)^2,\rho^k(\al''-\al')\}$ and
$\de^k:=(t^k)^{\frac12}$.
Then $\eps^k>t^k$,
\begin{gather}
\label{T3.1P1}
\limsup_{k\to+\infty}\frac{\eps^k}{\rho^k} \le\al'+(\al''-\al')=\al''<\al
\AND \limsup_{k\to+\infty}\frac{\eps^k}{\de^k} \le\lim_{k\to+\infty}(1+t^k)(t^k)^{\frac12}=0.
\end{gather}
Note that $
-a^k_i\in\Omega'^k_i$ and $\|a^k_i\|<\eps^k$ $(i=1,\ldots,n)$.
By Theorem~\ref{T1.5}\eqref{T1.5-1}, there exist $x_i^k\in\Omega_i$, $x_i'^{*k}\in N^C_{\Omega_i}(x_i^k)$,
$y^{*k}_i\in X^*$ $(i=1,\ldots,n)$
and $v^k\in\rho^k\B_X$
such that
\begin{gather}
\label{T3.1P7}
\|x^k_i-u^k_i\|<\de^k\quad (i=1,\ldots,n),
\\
\label{T3.1P3}
\sum_{i=1}^n\|y_i^{*k}\|=1,
\\
\label{T3.1P2}
{\de^k} \sum_{i=1}^n\|x'^{*k}_i-y_i^{*k}\|+ \rho^k\Big\|\sum_{i=1}^ny_i^{*k}\Big\|<\eps^k,
\\
\label{T3.1P4}
\sum_{i=1}^n\ang{y_i^{*k},z_i^k}= \max_{1\le i\le n}\|z^k_i\|,
\end{gather}
where
$z_i^k
$ $(i=1,\ldots,n)$ are given by \eqref{T3.1-2}.
Moreover, $\bold{z}^k:=(z^k_1,\ldots,z^k_n)\ne0$ because otherwise $v^k\in\bigcap_{i=1}^n\Omega'^k_i\cap \big(\rho^k\B_X\big)$ contradicting \eqref{T3.1P0}.

Thus,
$x^k_i-x^k_{0i}\to0$ $(i=1,\ldots,n)$, and
$v^k\to0$
as $k\to+\infty$.
By \eqref{T3.1P2},
\begin{gather}
\label{T3.1P5}
\sum_{i=1}^n\|x'^{*k}_i-y_i^{*k}\| <\frac{\eps^k}{\de^k}<1.
\end{gather}
It follows from \eqref{T3.1P3} and \eqref{T3.1P5} that \begin{gather*}
1-\frac{\eps^k}{\de^k} <\sum_{i=1}^n\|x_i'^{*k}\| <1+\frac{\eps^k}{\de^k}
\end{gather*}
and, in view of \eqref{T3.1P1},
\begin{gather}\label{T3.1P8}
0\ne\eta^k:=\sum_{i=1}^n\|x_i'^{*k}\|\to1\;\; \text{as}\;\; k\to{+\infty}.
\end{gather}
Set $x_i^{*k}:=x_i'^{*k}/\eta^k$
$(i=1,\ldots,n)$ and $\bold{x}^{*k}:=(x^{*k}_1,\ldots,x^{*k}_n)$.
Thus, $x_i^{*k}\in N^C_{\Omega}(x_i^k)$ $(i=1,\ldots,n)$, $\|\bold{x}^{*k}\|=1$,
\begin{gather*}
\eta^k\Big\|\sum_{i=1}^nx_i^{*k}\Big\| \le\Big\|\sum_{i=1}^ny_i^{*k}\Big\| +\sum_{i=1}^n\|x_i'^{*k}-y_i^{*k}\| \overset{\eqref{T3.1P2},\eqref{T3.1P5}
}< \frac{\eps^k}{\rho^k} +
\frac{\eps^k}{\de^k},
\\
\eta^k\frac{\ang{\bold{x}^{*k},\bold{z}^k}} {\|\bold{z}^k\|} >\frac{\sum_{i=1}^n\ang{y_i^{*k},z_i^k}} {\|\bold{z}^k\|} -\sum_{i=1}^n\|x'^{*k}_i-y_i^{*k}\| \overset{\eqref{T3.1P4},\eqref{T3.1P5}}> 1-\frac{\eps^k}{\de^k}.
\end{gather*}
Taking limits in the above estimates, in view of \eqref{T3.1P8} and \eqref{T3.1P1},
 we obtain
\begin{gather}
\label{T3.1P9}
\limsup_{k\to+\infty} \Big\|{\sum_{i=1}^nx^{*k}_i}\Big\|<\al
\AND
\lim_{k\to+\infty}\frac{\ang{\bold{x}^{*k},\bold{z}^k}} {\|\bold{z}^k\|}=1.
\end{gather}

Let $X$ be Asplund.
By Theorem~\ref{T1.5}\eqref{T1.5-2},
there exist $x_i^k\in\Omega_i$, $x_i'^{*k}\in N^F_{\Omega_i}(x_i^k)$,
$y^{*k}_i\in X^*$ $(i=1,\ldots,n)$ and $v^k\in\rho^k\B_X$ such that conditions \eqref{T3.1P7}, \eqref{T3.1P3} and \eqref{T3.1P2} are satisfied, and
\begin{gather}
\label{T3.1P6}
\sum_{i=1}^n\ang{y_i^{*k},z_i^k}>\Big(1-\frac1k\Big) \max_{1\le i\le n}\|z^k_i\|,
\end{gather}
where
$z_i^k
$ $(i=1,\ldots,n)$ are given by \eqref{T3.1-2}.
As above, $\bold{z}^k:=(z^k_1,\ldots,z^k_n)\ne0$,
${x^k_i-x^k_{0i}}\to0$ $(i=1,\ldots,n)$,
$\eta^k:=\sum_{i=1}^n\|x_i'^{*k}\|\to1$ as $k\to{+\infty}$, $x_i^{*k}:=x_i'^{*k}/\eta^k\in N^F_{\Omega}(x_i^k)$ $(i=1,\ldots,n)$, and conditions \eqref{T3.1P9} are satisfied with $\bold{x}^{*k}:=(x^{*k}_1,\ldots,x^{*k}_n)$.

{To formally complete the proof of condition \GSsal,
it suffices to set $x_i^k:=x_i^{k_0+1}$, $x_i^{*k}:=x_i^{(k_0+1)*}$ $(i=1,\ldots,n)$ and $\bold{x}^{*k}:=\bold{x}^{(k_0+1)*}$
for all $k=1,\ldots,k_0$.}
\qed\end{proof}

The approximate stationarity property (i.e., the case
$\widetilde\al(\sigxz;\siga)=0$)
requires some additional effort.

\begin{corollary}
\label{C3.2}
Let $\sigxz:=\{\bold{x}^k_0\}\subset\widehat\Omega$,
$\bold{x}_0^k:=(x^k_{01},\ldots,x^k_{0n})$ $(k\in\N)$.
Suppose that $\{\Omega_1,\ldots,\Omega_n\}$ is approximately sta\-tionary (particularly, sta\-tionary or extremal)
at $\sigxz$.
Then there exist
$x^k_i\in\Omega_i$,
$x_i^{*k}\in N_{\Omega_i}(x_i^k)$
($i=1,\ldots,n$, $k=1,2,\ldots$) such that $x^k_i-x^k_{0i}\to0$ $(i=1,\ldots,n)$ as $k\to+\infty$, and condition \GSs\ holds true.

Furthermore, if the assumed property holds with respect to some $\siga:=\{\bold{a}^k\}\subset X^n\setminus\{0\}$, $\bold{a}^k:=(a^k_1,\ldots,a^k_n)$ $(k\in\N)$ such that $\bold{a}^k\to0$ as $k\to+\infty$, then there exist $\bold{u}^k:=(u^k_1,\ldots,u^k_n)\in\widehat\Omega$ and $v^k\in X$ $(k\in\N)$
such that $\bold{u}^k-\bold{x}^k_0\to0$ ($\bold{u}^k=\bold{x}^k_0$ if $\{\Omega_1,\ldots,\Omega_n\}$ is sta\-tionary or extremal at $\sigxz$ with respect to $\siga$), $v^k\to0$ as $k\to+\infty$, and condition \PDs\ is satisfied with $\bold{x}^{*k}:=(x^{*k}_1,\ldots,x^{*k}_n)$ and
$\bold{z}^k$ given by \eqref{T3.1-2}.
\end{corollary}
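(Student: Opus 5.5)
We derive Corollary~\ref{C3.2} from Theorem~\ref{T3.1} with a diagonal argument over a sequence $\al_j\downarrow0$. By Definition~\ref{D2.11}, approximate stationarity at $\sigxz$ means $\widetilde\al(\sigxz;\siga)=0$ for some admissible $\siga$. Hence the collection is approximately $\al$-stationary at $\sigxz$ with respect to $\siga$ for every $\al>0$.

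Fix $\al_j:=1/j$ $(j\in\N)$. For each $j$, apply Theorem~\ref{T3.1} with $\al=\al_j$. This gives sequences $\bold{x}^{k,j}$, $x_i^{*k,j}\in N_{\Omega_i}(x_i^{k,j})$, $\bold{u}^{k,j}\in\widehat\Omega$ and $v^{k,j}$ with the following properties:
\begin{itemize}
\item $\bold{x}^{k,j}-\bold{x}_0^k\to0$, $\bold{u}^{k,j}-\bold{x}_0^k\to0$ and $v^{k,j}\to0$ as $k\to+\infty$;
\item $\sum_i\|x_i^{*k,j}\|=1$ and $\limsup_k\|\sum_ix_i^{*k,j}\|<1/j$;
\item $\ang{\bold{x}^{*k,j},\bold{z}^{k,j}}/\|\bold{z}^{k,j}\|\to1$, with $\bold{z}^{k,j}\ne0$ for large $k$ and $\bold{z}^{k,j}$ given by \eqref{T3.1-2}.
\end{itemize}

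Next choose thresholds. Pick an increasing sequence $K_1<K_2<\cdots$ such that, for all $k\ge K_j$, the following hold:
\begin{itemize}
\item $\|\bold{x}^{k,j}-\bold{x}_0^k\|<1/j$, $\|\bold{u}^{k,j}-\bold{x}_0^k\|<1/j$ and $\|v^{k,j}\|<1/j$;
\item $\|\sum_ix_i^{*k,j}\|<1/j$;
\item $\bold{z}^{k,j}\ne0$ and $\bigl|\ang{\bold{x}^{*k,j},\bold{z}^{k,j}}/\|\bold{z}^{k,j}\|-1\bigr|<1/j$.
\end{itemize}
Such $K_j$ exist since finitely many limit relations are involved for each fixed $j$.

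Now take the diagonal. For $K_j\le k<K_{j+1}$, set $\bold{x}^k:=\bold{x}^{k,j}$, $x_i^{*k}:=x_i^{*k,j}$, $\bold{u}^k:=\bold{u}^{k,j}$ and $v^k:=v^{k,j}$. For $k<K_1$, use the $j=1$ objects. The index $j=j(k)$ tends to $+\infty$ as $k\to+\infty$, so all the required limits follow:
\begin{itemize}
\item $\bold{x}^k-\bold{x}^k_0\to0$, $\bold{u}^k-\bold{x}_0^k\to0$ and $v^k\to0$;
\item $\sum_ix_i^{*k}\to0$ together with $\sum_i\|x_i^{*k}\|=1$, which is \GSs;
\item \PDs\ holds with $\bold{z}^k$ from \eqref{T3.1-2}, since $\bold{z}^k=\bold{z}^{k,j(k)}$ is built from the same diagonal data.
\end{itemize}

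In the stationary case we have $\al(\sigxz;\siga)=0$. Then Theorem~\ref{T3.1} applies with $\bold{u}^{k,j}=\bold{x}^k_0$ for every $j$, so $\bold{u}^k=\bold{x}_0^k$. Extremality implies stationarity with respect to the same $\siga$ by the proposition following Definition~\ref{D2.01}. The first part of the corollary, without $\siga$, is contained in this construction.

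The main obstacle is to make the diagonalisation legitimate. One might worry that the $\al$-dependent choice of $\bold{u}^k$ inside Theorem~\ref{T3.1} breaks coherence. It does not, because \PDs\ and \GSs\ are pointwise in $k$ apart from the limits. The only requirement is that each approximating quantity be controlled at a fixed $k$ by a threshold depending on $j$, and the thresholds $K_j$ provide exactly that.
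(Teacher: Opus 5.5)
Your proposal is correct and takes the paper's own route: fix the $\siga$ from the definition, apply Theorem~\ref{T3.1} with $\al=1/j$ for each $j$, diagonalise in $k$, and inherit $\bold{u}^k=\bold{x}^k_0$ in the stationary or extremal case. Your thresholds $K_j$ are slightly more careful than the paper's write-up. The paper chooses $k^j$ only to control $\|\sum_{i=1}^nx_i^{*k}(j)\|$ and leaves implicit the analogous control of $x_i^k(j)-x^k_{0i}$, $u_i^k(j)-x^k_{0i}$, $v^k(j)$ and the \PDs\ ratio, which is needed for those quantities to converge along the diagonal.
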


\begin{proof}
By Definitions~\ref{D2.11} and \ref{D2.01}\eqref{D2.01.3},
$\{\Omega_1,\ldots,\Omega_n\}$ is approximately $\al$-sta\-tionary at $\sigxz$ with respect to some $\siga:=\{\bold{a}^k\}\subset X^n\setminus\{0\}$, $\bold{a}^k:=(a^k_1,\ldots,a^k_n)$ $(k\in\N)$ such that $\bold{a}^k\to0$ as $k\to+\infty$ for any $\al>0$.
Let $j\in\N$.
By Theorem~\ref{T3.1},
there exist $x_i^k(j),u_i^k(j)\in\Omega_i$, $x^{*k}_i(j)\in N_{\Omega_i}(x_i^k(j))$ $(i=1,\ldots,n)$ and $v^k(j)\in X$ $(k\in\N)$
such that $x^k_i(j)-x^k_{0i}\to0$, $u^k_i(j)-x^k_{0i}\to0$, $v^k(j)\to0$, $\limsup_{k\to+\infty} \|{\sum_{i=1}^nx_i^{*k}(j)}\|\le1/j$,
$\sum_{i=1}^n\|x^{*k}_i(j)\|=1$, $\bold{z}^k(j):=(z_1^k(j),\ldots,z_n^k(j))\ne0$ for all sufficiently large ${k\in\N}$, and
$\lim_{k\to+\infty}{\ang{\bold{x}^{*k}(j),\bold{z}^k(j)}}/ {\|\bold{z}^k(j)\|}=1$,
where $z_i^k(j):=v^k(j)-x_i^k(j)+u_i^k(j)+a_i^k$ $(i=1,\ldots,n)$ and $\bold{x}^{*k}(j):=(x^{*k}_1(j),\ldots,x^{*k}_n(j))$.
If $\{\Omega_1,\ldots,\Omega_n\}$ is sta\-tionary or extremal at $\sigxz$ with respect to $\siga$, then, in view of Definition~\ref{D2.01}\eqref{D2.01.2}, $u^k_i(j)=x^k_{0i}(j)$ $(i=1,\ldots,n,\; k=1,2,\ldots)$.
Hence, there is an integer $k^j>j$
such that
$\|{\sum_{i=1}^nx_i^{*k}(j)}\|<2/j$
for all integers $k\ge k^j$.
Note that $k^j\to+\infty$ as $j\to+\infty$.

For each $k\in\N$, set $j^k:=\max\{j\in\N\mid k^j\le k\}$ if $k\ge k^1$ and $j^k:=1$ otherwise.
Thus, $j^k\to+\infty$ as $k\to+\infty$.
Setting $x_i^k:=x_i^k(j^k)$, $u_i^k:=u_i^k(j^k)$,
$x_i^{*k}:=x_i^{*k}(j^k)$ $(i=1,\ldots,n)$, $v^k:=v^k(j^k)$ and $\bold{z}^k:=\bold{z}^k(j^k)$, we see that
$x^k_i-x^k_{0i}\to0$, $u^k_i-x^k_{0i}\to0$ $(i=1,\ldots,n)$, $v^k\to 0$, and
$\sum_{i=1}^nx^{*k}_i\to0$ as $k\to+\infty$.
Hence, conditions \GSs\ and \PDs\ are satisfied.
\qed\end{proof}

\begin{remark}
\label{R3.4}
\begin{enumerate}
\item
The generalised se\-paration conditions \GSsal\ and \GSs\ are the main dual necessary conditions
in Theorem~\ref{T3.1} and Corollary~\ref{C3.2}.
The complementary primal-dual condition \PDs\ provides additional restrictions on the dual variables.
\item
In view of Remark~\ref{R2.16}, Theorem~\ref{T3.1} and Corollary~\ref{C3.2} improve \cite[Theorem~3.2 and Corollaries~3.3 and 3.4]{CuoKru}.
\item
In the particular case when $x_{0i}^k=\bx$ for some $\bx\in\bigcap_{i=1}^n\Omega_i$ and all $i=1,\ldots,n$ and $k=1,2,\ldots$, Theorem~\ref{T3.1} and Corollary~\ref{C3.2} give dual necessary conditions for
the specified properties at $\bx$.
Conditions $x^k_i-x^k_{0i}\to0$ and $u^k_i-x^k_{0i}\to0$ reduce in this case to $x_i^k\to\bx$ and $u_i^k\to\bx$, respectively.
With this in mind, Corollary~\ref{C3.2} extends the conventional {extremal principle} \cite{KruMor80,MorSha96,Mor06.1} and its numerous generalisations.

\item
Imposing certain sequential normal compactness assumptions (which are automatically satisfied in finite dimensions), one can formulate limiting versions of Theorem~\ref{T3.1} and Corollary~\ref{C3.2} in terms of certain types of limiting normal cones.
This topic is going to be explored in more detail elsewhere.
\end{enumerate}
\end{remark}

The assertions in the first parts of Theorem~\ref{T3.1} and Corollary~\ref{C3.2} can be partially reversed, i.e., the dual necessary generalised separation conditions discussed in this section are in a sense also sufficient.
To show this, we need to prove first the next lemma which is of independent interest.

\begin{lemma}
\label{L3.5}
Let $\bold{x}:=(x_1,\ldots,x_n)\in\widehat\Omega$, $x^*_i\in N^F_{\Omega_i}(x_i)$ $(i=1,\ldots,n)$, $\|\sum_{i=1}^{n}x^*_i\|<\al$, $\sum_{i=1}^{n}\|x^*_i\|=1$, and $\eps>0$.
Then there exists an
$\bold{a}:=(a_1,\ldots,a_n)\in\eps\B_{X^n}\setminus\{0\}$ such that ${\|\bold{a}\|}/{\zeta(\bold{x}+\bold{a})}<\al$.
\end{lemma}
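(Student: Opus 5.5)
The idea is to push each set $\Omega_i$ slightly in a direction where the normal $x_i^*$ is almost maximal. The Fréchet normality of $x_i^*$ then keeps any small common point of the translated sets quantitatively far from the origin. Write $\beta:=\|\sum_{i=1}^nx_i^*\|<\al$.

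First fix the constants. Choose $\tau\in(0,1)$ and $\ga>0$ such that
\[
c:=\frac{\tau-n\ga}{\beta+n\ga}>\frac1\al .
\]
This is possible since $1/\beta>1/\al$ (with $1/\beta=+\infty$ if $\beta=0$). By the definition of $N^F_{\Omega_i}(x_i)$ in \eqref{NC}, there is a $\de>0$ such that
\[
\ang{x_i^*,y}\le\ga\|y\|\quad\text{whenever }x_i+y\in\Omega_i,\ \|y\|<\de\quad(i=1,\ldots,n).
\]
For each $i$, pick $e_i\in X$ with $\|e_i\|=1$ and $\ang{x_i^*,e_i}\ge\tau\|x_i^*\|$; any unit vector will do if $x_i^*=0$. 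Then pick a number $r_0>t/\al$ and a $t>0$ small enough that $t<\eps$ and $t+r_0<\de$; this is possible by taking $r_0$ proportional to $t$, say $r_0:=2t/\al$, and $t$ small. Set $a_i:=te_i$. Then $\bold a:=(a_1,\ldots,a_n)\ne0$, $\|\bold a\|=t<\eps$, and
\[
\sum_{i=1}^n\ang{x_i^*,a_i}\ge t\tau\sum_{i=1}^n\|x_i^*\|=t\tau .
\]

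The key step is to bound $\zeta(\bold x+\bold a)$ from below. Take any $w\in\bigcap_{i=1}^n(\Omega_i-x_i-a_i)$; if the intersection is empty, then $\zeta=+\infty$ and we are done. If $\|w\|\ge r_0$, nothing needs to be shown. If $\|w\|<r_0$, then $x_i+(a_i+w)\in\Omega_i$ and $\|a_i+w\|<t+r_0<\de$, so the Fréchet estimate applies to each $i$. Summing over $i$ gives
\[
t\tau-\beta\|w\|\le\sum_{i=1}^n\ang{x_i^*,a_i}+\Big\langle\sum_{i=1}^nx_i^*,w\Big\rangle\le n\ga(t+\|w\|),
\]
hence $\|w\|\ge ct$. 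Therefore $\zeta(\bold x+\bold a)\ge\min\{r_0,ct\}$. Since both $r_0$ and $ct$ are strictly greater than $t/\al$, we get $\|\bold a\|/\zeta(\bold x+\bold a)<\al$.

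The main obstacle is the locality of the Fréchet normal inequality: it only controls points near $x_i$. This is what forces the case split on $\|w\|$ and the choice of $r_0$ and $t$ (both small, with $r_0$ still exceeding $t/\al$). Once that bookkeeping is arranged, the argument above should go through directly.
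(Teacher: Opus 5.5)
Your proof is correct and follows essentially the same route as the paper's proof. You translate the sets along directions that almost norm the $x_i^*$, apply the Fr\'echet normal inequality to $a_i+w$ for common points $w$ near the origin, and use $\|\sum_{i=1}^nx_i^*\|<\al$ to force $\|w\|$ above $\|\bold{a}\|/\al$. The paper does the same bookkeeping in the product space with $\bold{x}^*\in N^F_{\widehat\Omega}(\bold{x})$: it fixes the Fr\'echet radius $\rho$ first and picks $\bold{a}\in\al\rho\B_{X^n}$ nearly maximising $\langle\bold{x}^*,\cdot\rangle$, then gets $\zeta(\bold{x}+\bold{a})\ge\rho$ by contradiction. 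You instead work componentwise and obtain the direct bound $\|w\|\ge ct$ after splitting on whether $\|w\|<r_0$.
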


\begin{proof}
Observe that $\bold{x}^*:=(x^*_1,\ldots,x^*_n)\in N^F_{\widehat\Omega}(\bold{x})$ (see, e.g., \cite[Proposition~2.6]{CuoKru25}).
Choose an $\al'\in(\|\sum_{i=1}^{n}x^*_i\|,\al)$ and set $\xi:=(\al-\al')/2$.
By the definition of \Fr\ normal cone,
there is a $\rho\in(0,\varepsilon/\al)$ such that
\begin{gather}
\label{T4.4P1}
\langle \bold{x}^*,\bold{u}-\bold{x}\rangle\le \dfrac{\xi}{\al+1}\|\bold{u}-\bold{x}\|<\xi\rho
\;\;
\text{for all}
\;\;
\bold{u}\in\widehat\Omega\cap B_{(\al+1)\rho}(\bold{x}).
\end{gather}
Choose a nonzero $\bold{a}:=(a_1,\ldots,a_n)\in\al\rho\B_{X^n}$ ($\subset\eps\B_{X^n}$) such that
\begin{gather}
\label{T4.4P2}
\langle \bold{x}^*,\bold{a}\rangle >(\al-\xi)\rho=(\al'+\xi)\rho.
\end{gather}
Suppose that $\zeta(\bold{x}+\bold{a})<\rho$.
Then $u_i-x_i-a_i=x_0$ for some $x_0\in\rho\B_X$ and $\bold{u}:= (u_1,\ldots,u_n)\in\widehat\Omega$, and all $i=1,\ldots,n$.
Hence, $\|\bold{u}-\bold{x}\|\le\|\bold{a}\|+\|x_0\|<(\al+1)\rho$,
and, by \eqref{T4.4P1},
$\ang{\bold{x}^*,\bold{u}-\bold{x}}<\xi\rho$.
Employing \eqref{T4.4P2}, we obtain
\begin{align*}
-\al'\rho<-\Big\|\sum_{i=1}^{n}x^*_i\Big\|\|x_0\| \le\Big\langle \sum_{i=1}^{n}x^*_i,x_0\Big\rangle=\langle \bold{x}^*,\bold{u}-\bold{x}\rangle-\langle \bold{x}^*,\bold{a}\rangle<-\al'\rho.
\end{align*}
The contradiction proves that $\zeta(\bold{x}+\bold{a})\ge\rho$.
Hence, ${\|\bold{a}\|}/{\zeta(\bold{x}+\bold{a})}<\al$.
\qed\end{proof}

As a consequence of Lemma~\ref{L3.5}, we have the promised statement partially reversing the assertions in the first parts of Theorem~\ref{T3.1} and Corollary~\ref{C3.2}.

\begin{proposition}
\label{P3.6}
Let $\sigx:=\{\bold{x}^k\}\subset\widehat\Omega$,
$\bold{x}^k:=(x^k_1,\ldots,x^k_n)$, and
$x^{*k}_i\in N^F_{\Omega_i}(x^k_i)$ ($i=1,\ldots,n$, $k=1,2,\ldots$).
Then \GSs\ \folgt \Ss\, and
\GSsal\ \folgt \Ssxal\; for any $\al>0$.
\end{proposition}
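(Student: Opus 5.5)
The plan is to apply Lemma~\ref{L3.5} separately at each index $k$, which produces a translation $\bold{a}^k$, and then assemble these into a sequence $\siga$ witnessing the property in Definition~\ref{D2.11}. I treat the \GSsal\ case first; the \GSs\ case then follows by letting the threshold shrink.

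\emph{Case \GSsal.} Suppose $\limsup_{k\to+\infty}\|\sum_{i=1}^nx^{*k}_i\|<\al$ and $\sum_{i=1}^n\|x^{*k}_i\|=1$.

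1. Choose $\al'$ with $\limsup_{k}\|\sum_i x^{*k}_i\|<\al'<\al$. Then there is a $k_0$ such that $\|\sum_i x^{*k}_i\|<\al'$ for all $k>k_0$.

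2. For each $k>k_0$, apply Lemma~\ref{L3.5} with $\bold{x}:=\bold{x}^k$, with $\al'$ in place of $\al$, and with $\eps:=1/k$. It gives a nonzero $\bold{a}^k\in k^{-1}\B_{X^n}$ such that $\|\bold{a}^k\|/\zeta(\bold{x}^k+\bold{a}^k)<\al'$.

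3. For $k\le k_0$, take any nonzero $\bold{a}^k$ with $\|\bold{a}^k\|<1/k$. Finitely many indices do not affect the $\limsup$, so their values are irrelevant.

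4. Then $\bold{a}^k\to0$, every $\bold{a}^k$ is nonzero, and $\al(\sigx;\siga)\le\al'<\al$. By Definition~\ref{D2.01}\eqref{D2.01.2}, this is \EE{S}{\sigx;\siga}{\al}, and hence \Ssxal\ by Definition~\ref{D2.11}.

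\emph{Case \GSs.} Now $\sum_i x^{*k}_i\to0$, and $\al(\sigx;\siga)=0$ is needed for a single sequence $\siga$. Set $\al_k:=\|\sum_i x^{*k}_i\|+1/k$, so that $\al_k>\|\sum_i x^{*k}_i\|$ and $\al_k\to0$. For each $k$, apply Lemma~\ref{L3.5} with $\al:=\al_k$ and $\eps:=1/k$. This gives a nonzero $\bold{a}^k$ with $\|\bold{a}^k\|<1/k$ and $\|\bold{a}^k\|/\zeta(\bold{x}^k+\bold{a}^k)<\al_k$. Since $\al_k\to0$, the $\limsup$ of these ratios is $0$, so $\al(\sigx;\siga)=0$. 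By Definition~\ref{D2.01}\eqref{D2.01.2} this is \Ssa, and hence \Ss.

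The only delicate point is the one handled above in the \GSs\ case: the multiplier bound must tend to zero along the same sequence on which the translations are built. This is why $\al_k$ is chosen $k$-dependently rather than by a fixed-$\al$ diagonal argument. I also need the strict inequality $\|\sum_i x_i^{*k}\|<\al_k$ required by Lemma~\ref{L3.5}, which the term $1/k$ guarantees.

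Finally, since Lemma~\ref{L3.5} is stated for Fr\'echet normals, the proposition is only claimed for $x^{*k}_i\in N^F_{\Omega_i}(x^k_i)$, and no Asplund assumption on $X$ is needed.
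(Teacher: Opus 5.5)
Your proof is correct and follows the paper's argument. Like the paper, you apply Lemma~\ref{L3.5} at each sufficiently large $k$, with a threshold $\al'<\al$ and radii shrinking to zero, to build $\siga$ with $\al(\sigx;\siga)\le\al'$. In the \GSs\ case the paper only notes that \GSs\ gives \GSsal\ for every $\al>0$, which leaves implicit the diagonal passage from $\al$-stationarity for all $\al$ to stationarity; your choice $\al_k:=\|\sum_{i=1}^nx^{*k}_i\|+1/k\to0$ produces a single sequence with $\al(\sigx;\siga)=0$ directly, a slightly cleaner way to close that step.
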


\begin{proof}
Let $\varepsilon^k>0$ $(k\in\N)$,
$\eps^k\downarrow0$ as $k\to+\infty$, and condition \GSsal\ be satisfied.
Choose an $\al'<\al$ so that $\|{\sum_{i=1}^nx^{*k}_i}\|<\al'$ for all sufficiently large $k\in\N$.
By Lemma~\ref{L3.5},
for each such $k$, there exists  an
$\bold{a}^k:=(a^k_1,\ldots,a^k_n)\in\eps^k\B_{X^n}\setminus\{0\}$  such that $\|\bold{a}^k\|/{\zeta(\bold{x}^k+\bold{a}^k)}<\al'$.
Hence, $\bold{a}^k\to0$ and $\al(\sigx;\siga)\le\al'<\al$, where
$\siga:=\{\bold{a}^k\}$, i.e., $\{\Omega_1,\ldots,\Omega_n\}$ is $\al$-sta\-tionary at
$\sigx$.
If condition \GSs\ is satisfied, then condition \GSsal\ is satisfied for any $\al>0$, and the conclusion follows.
\qed\end{proof}

Combining Theorem~\ref{T3.1}, Corollary~\ref{C3.2} and Proposition~\ref{P3.6}, we obtain the next theorem.

\begin{theorem}
[Sequential extended extremal principle]
\label{T3.9}
Let $\al>0$.
Suppose that either $X$ is Asplund or $\Omega_1,\ldots,\Omega_n$ are convex.
The collection
$\{\Omega_1,\ldots,\Omega_n\}$ is approximately $\al$-sta\-tionary (approximately stationary) at $\sigxz:=\{\bold{x}^k_0\}\subset\widehat\Omega$, $\bold{x}_0^k:=(x^k_{01},\ldots,x^k_{0n})$ if and only if
there exist
$\bold{x}^k:=(x^k_1,\ldots,x^k_n)\in\widehat\Omega$ $(k\in\N)$ and
$x_i^{*k}\in N_{\Omega_i}(x_i^k)$
($i=1,\ldots,n$, $k=1,2,\ldots$) such that $\bold{x}^k-\bold{x}^k_0\to0$ as $k\to+\infty$, and condition \GSsal\ (condition \GSs) holds true.
\end{theorem}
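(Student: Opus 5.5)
The proof splits into the two implications, handled for the $\al$-version and the `approximately stationary' version simultaneously.

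\emph{Necessity} (approximate ($\al$-)stationarity $\Rightarrow$ dual conditions). If $X$ is Asplund, the generic notation $N$ means $N^F$, and the claim is exactly the first part of Theorem~\ref{T3.1} (for \GSsal) and of Corollary~\ref{C3.2} (for \GSs). In their proofs one uses Theorem~\ref{T1.5}\eqref{T1.5-2} in the Asplund case, which produces Fr\'echet normals. If the sets are convex, Theorem~\ref{T3.1} and Corollary~\ref{C3.2} still apply in any Banach space and yield Clarke normals. For convex sets, Clarke normal cones coincide with the convex-analysis normal cone, which also equals the Fr\'echet one, so the normals obtained lie in $N_{\Omega_i}(x_i^k)$ whichever convention is in force. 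No new argument is needed here beyond noting this coincidence.

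\emph{Sufficiency} (dual conditions $\Rightarrow$ approximate ($\al$-)stationarity). Given $\bold{x}^k\in\widehat\Omega$ with $\bold{x}^k-\bold{x}^k_0\to0$ and normals satisfying \GSsal\ (resp.\ \GSs), in either case the normals are Fr\'echet normals: in the Asplund case by convention, and in the convex case because all the cones coincide. Proposition~\ref{P3.6} then gives that $\{\Omega_1,\ldots,\Omega_n\}$ is $\al$-stationary (resp.\ stationary) at $\sigx:=\{\bold{x}^k\}$. That is, there is a sequence $\siga$ with $\bold{a}^k\to0$, $\bold{a}^k\ne0$, and $\al(\sigx;\siga)<\al$ (resp.\ $=0$).

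It remains to transfer this property from $\sigx$ to $\sigxz$. Here I use Definition~\ref{D2.01}\eqref{D2.01.3} directly with the same $\siga$. Since $\sigx\subset\widehat\Omega$ and $\bold{x}^k-\bold{x}^k_0\to0$, the sequence $\sigx$ is an admissible competitor $\sigu$ in the infimum defining $\widetilde\al(\sigxz;\siga)$. Hence
\[
\widetilde\al(\sigxz;\siga)\le\al(\sigx;\siga)<\al \quad(\text{resp. }=0),
\]
so the collection is approximately $\al$-stationary (resp.\ approximately stationary) at $\sigxz$ with respect to $\siga$, and therefore at $\sigxz$ in the sense of Definition~\ref{D2.11}. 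Equivalently, one can combine Proposition~\ref{P2.12} ($\mathbf{S}_\al\Rightarrow\mathbf{AS}_\al$) with the stability in Proposition~\ref{P2.14}.

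The only delicate point is the convex case: I must make sure that the normal cones produced by Theorem~\ref{T3.1} (Clarke) and those required by Proposition~\ref{P3.6} and Lemma~\ref{L3.5} (Fr\'echet) are the same objects. This follows from the standard fact, recalled in the Preliminaries, that both cones reduce to the convex normal cone for convex sets. Everything else is a direct assembly of the earlier results.
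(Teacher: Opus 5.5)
Your proposal is correct and follows essentially the same route as the paper. Necessity comes from Theorem~\ref{T3.1} and Corollary~\ref{C3.2}, using that Clarke and Fr\'echet normals coincide for convex sets. Sufficiency comes from Proposition~\ref{P3.6} followed by a transfer from $\sigx$ to $\sigxz$; your direct use of $\sigx$ as a competitor in the infimum defining $\widetilde\al(\sigxz;\siga)$ simply unfolds the paper's appeal to Propositions~\ref{P2.12} and~\ref{P2.14}.
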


\begin{proof}
Under the assumptions, thanks to our standing convention, $N$ is the \Fr\ normal cone.
The necessity follows from Theorem~\ref{T3.1} (Corollary~\ref{C3.2}).
The converse implication
is a consequence of Proposition~\ref{P3.6}, which yields $\al$-sta\-tionarity (sta\-tionarity) of $\{\Omega_1,\ldots,\Omega_n\}$ at $\sigx:=\{\bold{x}^k\}$.
It remains to notice that, thanks to Proposition~\ref{P2.14}, $\al$-sta\-tionarity (sta\-tionarity) at
$\sigx$ implies approximate $\al$-sta\-tionarity (approximate sta\-tionarity) at
$\sigxz$.
\qed\end{proof}

\begin{remark}
\begin{enumerate}
\item
Theorem~\ref{T3.9} generalises and extends Theorem~\ref{T1.4} as well as \cite[Theorem~3.6 and Corollary~3.7]{CuoKru} and \cite[Theorem~5]{BuiKru18}.
\item
Theorem~\ref{T3.9} can be reformulated as a statement providing dual characterisations of the absence of approximate $\al$-sta\-tionarity or approximate stationarity.
These properties can be interpreted as a kind of (sequential) \emph{transversality} of collections of sets; cf. \cite{Kru05,KruTha13,CuoKru21.2,CuoKru25}.
This topic is going to be explored in more detail elsewhere.
\item
It is easy to see from the proofs that in Lemma~\ref{L3.5} and Proposition~\ref{P3.6}, $X$ can be an arbitrary normed space, and the sets $\Omega_1,\ldots,\Omega_n$ do not have to be closed.
As a consequence, the sufficiency in Theorem~\ref{T3.9} is also true in this less restrictive setting.
The Banach structure of the space and closedness of the sets are only needed for the necessity part, the proof of which relies on Theorem~\ref{T3.1} and Corollary~\ref{C3.2}.
\end{enumerate}
\end{remark}


\section{Sequential necessary optimality and stationarity conditions}\label{S4}

In this section, we consider the following minimisation problem with a single geometric constraint:
\begin{gather}
	\label{P}
	\tag{${P}$}
	\text{minimize }\;f(x)\quad \text{subject to }\; x\in\Omega,
\end{gather}
where $\Omega$ is a nonempty
subset of a normed vector space $X$ and
$f:X\to\R_\infty$.

Let a sequence $\sigma_x:=\{x^k\}\subset\Omega\cap\dom f$ be given.
The next two definitions extend the notion of minimising sequence. Below, $S_f(\mu)$ denotes the lower level set of $f$ at level $\mu\in\R$, i.e., $S_f(\mu):=\{x\in X\mid f(x)\le\mu\}$, and, given an $x\in X$ and an $\eps>0$, we use the notation $\zeta_{f,\Omega}(x,\eps):=d\big(x,\Omega\cap S_f(f(x)-\eps)\big)$.

\begin{definition}
\label{D4.1}
Let $\sigma_\eps:=\{\eps^k\}\subset\R_+\setminus\{0\}$ and $\eps^k\downarrow0$ as $k\to+\infty$.
The sequence $\sigma_x$ is
\begin{enumerate}
\item
\label{D4.1.1}
$\sigma_\eps$-extremal for problem \eqref{P} if
\begin{gather}
\label{D4.1-01}
\rho_{f,\Omega}(\sigma_x;\sigma_\eps) :=\liminf_{k\to+\infty}
\zeta_{f,\Omega}(x^k,\eps^k)>0;
\end{gather}
if $\rho_{f,\Omega}(\sigma_x;\sigma_\eps)=+\infty$, we say that $\sigma_x$ is globally $\sigma_\eps$-extremal for problem \eqref{P};
\item
\label{D4.1.2}
$(\sigma_\eps,\al)$-stationary for problem \eqref{P} if
\begin{gather}
\label{D4.1-02}
\al_{f,\Omega}(\sigma_x;\sigma_\eps):=\limsup_{k\to+\infty}
\frac{\eps^k} {\zeta_{f,\Omega}(x^k,\eps^k)}<\al<+\infty;
\end{gather}
if $\al_{f,\Omega}(\sigma_x;\sigma_\eps)=0$ (hence, \eqref{D4.1-02} is satisfied with any $\al>0$), we say that $\sigma_x$ is $\sigma_\eps$-sta\-tionary for problem \eqref{P};
\item
\label{D4.1.3}
approximately $(\sigma_\eps,\al)$-stationary for problem \eqref{P} if
\begin{gather}
\label{D4.1-03}
\widetilde\al_{f,\Omega}(\sigma_x;\sigma_\eps):= \inf_{\substack{
\sigma_u:=\{u^k\}\subset\Omega,\,u^k-x^k\to0,\, f(u^k)-f(x^k)\to0}}
\al_{f,\Omega}(\sigma_u;\sigma_\eps)<\al<+\infty;
\end{gather}
if $\widetilde\al_{f,\Omega}(\sigma_x;\sigma_\eps)=0$ (hence, \eqref{D4.1-03} is satisfied with any $\al>0$), we say that $\sigma_x$ is approximately $\sigma_\eps$-stationary for problem \eqref{P}.
\end{enumerate}
\end{definition}

\begin{definition}
\label{D4.2}
The sequence $\sigma_x$ is
extremal (resp., globally extremal, $\al$-sta\-tionary, stationary, approximately $\al$-stationary, approximately stationary) for problem \eqref{P} if it is $\sigma_\eps$-ex\-tremal (resp., globally $\sigma_\eps$-extremal, $(\sigma_\eps,\al)$-sta\-tionary, $\sigma_\eps$-stationary, approximately $(\sigma_\eps,\al)$-sta\-tionary, approximately $\sigma_\eps$-stationary) for problem \eqref{P} with some $\sigma_\eps:=\{\eps^k\}\subset\R_+\setminus\{0\}$ such that $\eps^k\downarrow0$.
\end{definition}

\begin{remark}
The relationships between the respective properties in Definitions~\ref{D4.1} and \ref{D4.2} are straightforward:
$\sigma_\eps$-ext\-remality~$\Rightarrow$ $\sigma_\eps$-stationarity~$\Rightarrow$ approximate
$\sigma_\eps$-sta\-tio\-na\-ri\-ty, and
ext\-remality~$\Rightarrow$ stationarity~$\Rightarrow$ approximate
sta\-tio\-na\-ri\-ty.
\end{remark}

\begin{proposition}
\label{P5.3}
Let $\inf_\Omega f>-\infty$.
If $\sigma_x$ is a minimising sequence of $f$ on $\Omega$, i.e., $f(x^k)\to\inf_\Omega f$ as $k\to+\infty$, then it is globally extremal for problem \eqref{P}.
\end{proposition}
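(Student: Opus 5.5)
We must produce some sequence $\sigma_\eps=\{\eps^k\}\subset\R_+\setminus\{0\}$ with $\eps^k\downarrow0$ such that $\rho_{f,\Omega}(\sigma_x;\sigma_\eps)=+\infty$. The plan is to choose $\eps^k$ so large relative to the gap $f(x^k)-\inf_\Omega f$ that the level set in the definition of $\zeta_{f,\Omega}$ becomes empty. Then, by the convention $d(x,\emptyset)=+\infty$, we get $\zeta_{f,\Omega}(x^k,\eps^k)=+\infty$ for all large $k$.

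Set $m:=\inf_\Omega f\in\R$ and $\gamma^k:=f(x^k)-m$. Since $x^k\in\Omega\cap\dom f$, each $\gamma^k$ is finite and nonnegative, and $\gamma^k\to0$ by assumption. Define
$$\eps^k:=\sup_{j\ge k}\gamma^j+\frac1k.$$
This sequence is strictly positive and strictly decreasing, because the tail supremum is nonincreasing and $1/k$ strictly decreases. It also tends to $0$, since $\sup_{j\ge k}\gamma^j\to0$ as $\gamma^j\to0$. So $\sigma_\eps$ is admissible in Definition~\ref{D4.1}.

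Next, for every $k$ we have $f(x^k)-\eps^k\le \gamma^k+m-\gamma^k-1/k=m-1/k<m$. Hence $\Omega\cap S_f(f(x^k)-\eps^k)=\emptyset$, because no point of $\Omega$ has value below $\inf_\Omega f$. It follows that $\zeta_{f,\Omega}(x^k,\eps^k)=+\infty$ for all $k$, and therefore $\rho_{f,\Omega}(\sigma_x;\sigma_\eps)=+\infty$. By Definition~\ref{D4.1}\eqref{D4.1.1} and Definition~\ref{D4.2}, $\sigma_x$ is globally extremal for problem~\eqref{P}.

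The only delicate point is ensuring that $\eps^k$ is monotone decreasing while still dominating $\gamma^k$. Taking the tail supremum handles this, and adding $1/k$ makes the decrease strict and keeps the inequality $f(x^k)-\eps^k<m$ strict. If the intended meaning of $\downarrow$ is only nonincreasing, the $1/k$ term remains useful anyway, since it guarantees positivity when $\gamma^j=0$ for all $j\ge k$.
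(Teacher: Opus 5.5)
Your proof is correct and follows the same route as the paper: choose $\eps^k\downarrow0$ with $f(x^k)-\eps^k<\inf_\Omega f$, so that $\Omega\cap S_f(f(x^k)-\eps^k)=\emptyset$ and hence $\zeta_{f,\Omega}(x^k,\eps^k)=+\infty$ for all $k$. The only difference is that you build such a sequence explicitly, via $\eps^k:=\sup_{j\ge k}\gamma^j+1/k$, whereas the paper simply asserts that one exists.
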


\begin{proof}
Let $\sigma_x$ be a minimising sequence of $f$ on $\Omega$.
Then there exists a sequence $\sigma_\eps:=\{\eps^k\}\subset\R_+\setminus\{0\}$ with $\eps^k\downarrow0$ such that $f(x^k)<\inf_\Omega f+\eps^k$ for all
$k\in\N$.
The last inequality yields $f(x)>f(x^k)-\eps^k$ for all $x\in\Omega$; hence, $\Omega\cap S_f(f(x^k)-\eps^k)=\es$, and consequently, $\rho_{f,\Omega}(\sigma_x;\sigma_\eps)=+\infty$, i.e., $\sigma_x$ is globally extremal for problem \eqref{P}.
\qed\end{proof}

\begin{example}\label{E4.3}
Let $X$ be a normed space, $\Omega:=X$,
$f(x):=-\|x\|^2$ $(x\in X)$, $x^k:=0$ $(k\in\N)$, and $\eps^k\downarrow0$.
Then $S_f(-\eps^k)=\{x\in X\mid \|x\|\ge\sqrt{\eps^k}\}$ $(k\in\N)$, and consequently, $\rho_{f,\Omega}(\sigma_x;\sigma_\eps) =\lim_{k\to+\infty}\sqrt{\eps^k}=0$ and
$\al_{f,\Omega}(\sigma_x;\sigma_\eps)=\lim_{k\to+\infty} \frac{\eps^k}{\sqrt{\eps^k}} =\lim_{k\to+\infty}\sqrt{\eps^k}=0$.
Hence, $\sigma_x$ is not $\sigma_\eps$-extremal, but is $\sigma_\eps$-stationary (hence, also stationary) for problem \eqref{P}.
\end{example}

The next proposition gives some characterisations of the properties in Definition~\ref{D4.2}.

\begin{proposition}
\label{P4.5}
The sequence $\sigma_x$ is
\begin{enumerate}
\item
extremal for problem \eqref{P} if and only if there exists a $\rho>0$ such that
\begin{gather}
\label{P4.5-1}
\lim_{k\to+\infty}\Big(f(x^k)-\inf_{\Omega\cap B_\rho(x^k)}f\Big)=0;
\end{gather}
\item
globally extremal for problem \eqref{P} if and only if condition \eqref{P4.5-1} is satisfied for all $\rho>0$;

\item
\label{P4.5.3}
$\al$-stationary for problem \eqref{P} if and only if
\begin{gather}
\label{P4.5-3}
\al_{f,\Omega}^\circ(\sigma_x):= \inf_{\rho^k\downarrow0}\limsup_{k\to+\infty} \frac{f(x^k)-\inf_{\Omega\cap B_{\rho^k}(x^k)}f}{\rho^k}<\al;
\end{gather}

\item
stationary for problem \eqref{P} if and only if $\al_{f,\Omega}^\circ(\sigma_x)=0$;

\item
approximately $\al$-stationary for problem \eqref{P} if and only if
\begin{gather*}
\widetilde\al_{f,\Omega}^\circ(\sigma_x) :=\inf_{\substack{
\{u^k\}\subset\Omega,\,u^k-x^k\to0,\, f(u^k)-f(x^k)\to0}} \al_{f,\Omega}^\circ(\sigma_u)<\al;
\end{gather*}

\item
approximately stationary for problem \eqref{P} if and only if $\widetilde\al_{f,\Omega}^\circ(\sigma_x)=0$.
\end{enumerate}
\end{proposition}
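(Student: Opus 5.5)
The plan rests on one elementary identity linking $\zeta_{f,\Omega}$ to the local decrease of $f$. For $x\in\Omega\cap\dom f$, $\eps>0$ and $\rho>0$, write $\Delta(x,\rho):=f(x)-\inf_{\Omega\cap B_\rho(x)}f\ge0$. Then
\begin{gather*}
\zeta_{f,\Omega}(x,\eps)<\rho \iff \Omega\cap S_f(f(x)-\eps)\cap B_\rho(x)\ne\emptyset \iff \text{there is } u\in\Omega\cap B_\rho(x) \text{ with } f(u)\le f(x)-\eps.
\end{gather*}
Hence $\Delta(x,\rho)>\eps$ implies $\zeta_{f,\Omega}(x,\eps)<\rho$, and $\zeta_{f,\Omega}(x,\eps)<\rho$ implies $\Delta(x,\rho)\ge\eps$. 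Equivalently, $\zeta_{f,\Omega}(x,\eps)\ge\rho$ yields $\Delta(x,\rho)\le\eps$, and $\Delta(x,\rho)<\eps$ yields $\zeta_{f,\Omega}(x,\eps)\ge\rho$. All six assertions are translations through this dictionary, choosing either $\eps^k$ given $\rho^k$ or $\rho^k$ given $\eps^k$.

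\emph{(i), (ii).} Suppose $\sigma_x$ is $\sigma_\eps$-extremal, and take $\rho<\rho_{f,\Omega}(\sigma_x;\sigma_\eps)$, or any $\rho>0$ in the global case. For large $k$ we have $\zeta_{f,\Omega}(x^k,\eps^k)>\rho$, so $0\le\Delta(x^k,\rho)\le\eps^k\to0$, which is \eqref{P4.5-1}. Conversely, given \eqref{P4.5-1} for some $\rho$, set
\begin{gather*}
\eps^k:=\sup_{j\ge k}\Delta(x^j,\rho)+1/k.
\end{gather*}
Then $\eps^k$ is decreasing, $\eps^k\downarrow0$, and $\eps^k>\Delta(x^k,\rho)$, so $\zeta_{f,\Omega}(x^k,\eps^k)\ge\rho$. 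In (ii) the sequence $\eps^k$ must not depend on $\rho$. To arrange this, take $\eps^k:=\sup_{j\ge k}\Delta(x^j,m_k)+1/k$ with integers $m_k\to\infty$ chosen slowly. By a diagonal argument, pick $K_m$ increasing such that $\Delta(x^j,m)<1/m$ for $j\ge K_m$, and let $m_k:=\max\{m: K_m\le k\}$. Then $\zeta_{f,\Omega}(x^k,\eps^k)\ge m_k\to\infty$. Monotonicity of $\eps^k$ needs a small fix, such as replacing it by $\sup_{j\ge k}$ of these quantities, which still tends to $0$.

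\emph{(iii), (iv).} If $\al_{f,\Omega}(\sigma_x;\sigma_\eps)<\al'<\al$, set $\rho^k:=\eps^k/\al'\downarrow0$. Then for large $k$ we get $\zeta_{f,\Omega}(x^k,\eps^k)>\rho^k$, so $\Delta(x^k,\rho^k)\le\eps^k=\al'\rho^k$ and $\al^\circ_{f,\Omega}(\sigma_x)\le\al'<\al$. Conversely, take $\rho^k\downarrow0$ and $\al'<\al''<\al$ with $\Delta(x^k,\rho^k)<\al'\rho^k$ for large $k$. Set $\eps^k:=\al'\rho^k$, which is positive and decreasing to $0$. Then $\zeta_{f,\Omega}(x^k,\eps^k)\ge\rho^k$, and the ratio is $\eps^k/\zeta\le\al'<\al$. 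For (iv) apply (iii) with every $\al>0$. The issue is that the sequence $\sigma_\eps$ produced depends on $\al$, while stationarity needs a single $\sigma_\eps$ with ratio tending to $0$. I would handle this by the diagonal construction of Corollary~\ref{C3.2}: take sequences $\rho^k(j)$ achieving ratio $<1/j$ for $k\ge k^j$, then splice them with $j^k\to\infty$. The resulting $\rho^k$ may fail to be monotone, so I would replace it by a monotone modification. I expect this monotonicity bookkeeping, here and in (ii), to be the only real obstacle. If $\rho\mapsto\Delta/\rho$ behaves badly under this, I would instead use $\eps^k:=\max_{j\ge k}(\dots)$ and argue via $\limsup$.

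\emph{(v), (vi).} Apply the $\al$-stationarity case to each admissible $\sigma_u$; the infima over $\sigma_u$ in \eqref{D4.1-03} and in $\widetilde\al^\circ_{f,\Omega}$ range over the same class. The identity $\al_{f,\Omega}(\sigma_u;\sigma_\eps)<\al$ for some $\sigma_\eps$ $\iff$ $\al^\circ_{f,\Omega}(\sigma_u)<\al$ is exactly the content of (iii). Case (vi) again requires the diagonal splice over both $\sigma_u$ and $\sigma_\eps$, done as in (iv).
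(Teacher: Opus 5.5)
Your translation between $\zeta_{f,\Omega}(x,\eps)<\rho$ and $\Delta(x,\rho)\ge\eps$ is exactly the argument the paper uses for (i) and (iii), and your (v) is the paper's ``consequence of (iii)''. The paper disposes of (ii) with ``the above proof is applicable'' and of (iv)--(vi) with ``consequences of (iii)''. You are right that in (ii), (iv) and (vi) a single $\sigma_\eps$ has to work for all $\rho$, resp.\ all $\al$. Your diagonal argument in (ii) is sound. The monotone replacement there is harmless because $\eps\mapsto\zeta_{f,\Omega}(x,\eps)$ is nondecreasing; in fact $\eps^k:=1/m_k+1/k$ is already decreasing and exceeds $\Delta(x^k,m_k)$.

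The step that fails is the ``monotone modification'' in (iv) and (vi), and it is not bookkeeping. There the controlled quantity is a ratio, and neither $\eps\mapsto\eps/\zeta_{f,\Omega}(x,\eps)$ nor $\rho\mapsto\Delta(x,\rho)/\rho$ is monotone. So $\sup_{l\ge k}\eps^l$, $\max_{j\ge k}(\cdots)$ or any other monotone envelope of the spliced sequence can destroy the convergence of the ratio. In fact no monotone choice need exist.

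Let $X=\Omega=\R$, $x^k:=k$, $c_1:=1$, $c_{j+1}:=4(j+1)c_j$. Let $f$ be a radial step function around each $x^k$ with $f(x^k)=0$, $\Delta(x^k,\rho)=\rho$ for $\rho\le c_1/k$, and $\Delta(x^k,\rho)=c_j/(jk)$ for $c_{j-1}/k<\rho\le c_j/k$ (as long as $c_j<k/2$). Put $f:=-1$ on the rest of $(k-\frac12,k+\frac12)$ and $f:=0$ elsewhere.

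The choice $\rho^k:=c_j/k$ gives ratio $1/j$, so $\al^\circ_{f,\Omega}(\sigma_x)=0$, and $\sigma_x$ is $\al$-stationary for every $\al>0$. On the other hand, $\zeta_{f,\Omega}(x^k,\eps)=c_j/k$ for $c_j/(jk)<\eps\le c_{j+1}/((j+1)k)=4c_j/k$. Hence $\eps^k/\zeta_{f,\Omega}(x^k,\eps^k)<\frac12$ forces $\eps^k\in(c_j/(jk),c_j/(2k))$ for some $j=j^k\ge3$, and the ratio tending to $0$ forces $j^k\to\infty$. At any step with $j^k>j^{k-1}=:j$ you would need $4c_j/k<\eps^k\le\eps^{k-1}<c_j/(2(k-1))$, which is impossible.

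So if $\eps^k\downarrow0$ means monotone decrease, assertion (iv), and likewise (vi), is false as stated; the paper's one-line reduction to (iii) hides this. The remedy is to read $\eps^k\downarrow0$ and $\rho^k\downarrow0$ as ``positive and tending to $0$''. The later results (Proposition~\ref{P4.3}, Theorem~\ref{T4.5}) only use convergence to $0$. Under that reading your splice is a complete proof with no modification step, provided you choose $k^j$ so that also $\eps^k(j)<1/j$ for $k\ge k^j$, and in (vi) also $|u^k(j)-x^k|<1/j$ and $|f(u^k(j))-f(x^k)|<1/j$.
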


\begin{proof}
\begin{enumerate}
\item
Let $\sigma_x$ be extremal for problem \eqref{P}.
By Definitions~\ref{D4.1} and \ref{D4.2}, there exists a sequence $\sigma_\eps:=\{\eps^k\}\subset\R_+\setminus\{0\}$ with $\eps^k\downarrow0$ such that condition \eqref{D4.1-01} is satisfied, and consequently, $\zeta_{f,\Omega}(x^k,\eps^k)>\rho$ for any $\rho\in\big(0,\rho_{f,\Omega}(\sigma_x;\sigma_\eps)\big)$ and all sufficiently large $k\in\N$.
The latter inequality yields $\Omega\cap S_f(f(x^k)-\eps^k)\cap B_\rho(x^k)=\es$, and consequently, $f(x^k)<+\infty$ and $f(x)>f(x^k)-\eps^k$ for all $x\in\Omega\cap B_\rho(x^k)$.
Hence, $0\le f(x^k)-\inf_{\Omega\cap B_\rho(x^k)}f\le\eps^k$, and consequently, condition \eqref{P4.5-1} is satisfied.

Conversely, let $\rho>0$, and condition \eqref{P4.5-1} be satisfied.
Then there exists a sequence $\sigma_\eps:=\{\eps^k\}\subset\R_+\setminus\{0\}$ with $\eps^k\downarrow0$ such that $f(x^k)-\inf_{\Omega\cap B_\rho(x^k)}f<\eps^k$ for all sufficiently large $k\in\N$.
The latter inequality yields $f(x)>f(x^k)-\eps^k$ for all $x\in\Omega\cap B_\rho(x^k)$, and consequently, $\zeta_{f,\Omega}(x^k,\eps^k)\ge\rho$ $(k\in\N)$.
Hence, $\rho_{f,\Omega}(\sigma_x;\sigma_\eps)\ge\rho>0$, i.e., $\sigma_x$ is $\sigma_\eps$-extremal for problem \eqref{P}.

\item
The above proof is applicable to the case of global extremality.

\item
Let $\sigma_x$ be $\al$-sta\-tionary for problem \eqref{P}.
By Definitions~\ref{D4.1} and \ref{D4.2}, there exists a sequence $\sigma_\eps:=\{\eps^k\}\subset\R_+\setminus\{0\}$ with $\eps^k\downarrow0$ such that condition \eqref{D4.1-02} is satisfied, and consequently, there is an $\al'\in(0,\al)$ such that $\zeta_{f,\Omega}(x^k,\eps^k)>\rho^k:=\eps^k/\al'$ for all sufficiently large $k\in\N$.
As above, the latter inequality implies $0\ge\inf_{\Omega\cap B_{\rho^k}(x^k)}f-f(x^k)\ge -\eps^k=-\al'\rho^k$.
Hence,
\begin{gather*}
\limsup_{k\to+\infty}\frac{f(x^k)-\inf_{\Omega\cap B_{\rho^k}(x^k)}f}{\rho^k}\le\al'<\al,
\end{gather*}
which yields \eqref{P4.5-3}.

Conversely, let condition \eqref{P4.5-3} be satisfied.
Then there exist a number $\al'\in(0,\al)$ and a sequence $\rho^k\downarrow0$ such that, for all sufficiently large $k\in\N$, we have $f(x^k)-\inf_{\Omega\cap B_{\rho^k}(x^k)}f<\eps^k:=\al'{\rho^k}$, and consequently, $\zeta_{f,\Omega}(x^k,\eps^k)\ge\rho^k$.
Denote $\sigma_\eps:=\{\eps^k\}$.
Thus, $\eps^k\downarrow0$, and
\begin{gather*}
\limsup_{k\to+\infty}\frac{\eps^k}{\zeta_{f,\Omega}(x^k,\eps^k)}\le\al'<\al,
\end{gather*}
i.e., $\sigma_x$ is $(\sigma_\eps,\al)$-sta\-tionary for problem \eqref{P}.
\end{enumerate}
The last three assertions are consequences of assertion \eqref{P4.5.3}.
\qed\end{proof}

\begin{remark}
The characterisations in Proposition~\ref{P4.5} give clear relationships between the extremality and stationarity properties in Definition~\ref{D4.2} and the corresponding ones studied in \cite[Sections~4 and 5]{CuoKru}.
Note that the definitions in \cite{CuoKru} assume that $\{f(x^k)\}$ converges to some number $\mu_0$.
We do not use this assumption here.

Under this superfluous assumption, a sequence $\sigma_x:=\{x^k\}\subset\Omega$ is firmly
$\inf$-stationary (resp., $\inf$-sta\-tionary, approximately $\inf$-stationary) for problem \eqref{P} at level $\mu_0$ in the sense of \cite[Definition~4.5]{CuoKru} if and only if some subsequence of $\sigma_x$ is extremal (resp., stationary, approximately stationary) for problem \eqref{P}.
Thus, the definitions adopted in the current paper
are more general and more specific in identifying the sequences relevant for the properties.
If $x^k=\bx$ for all $k\in\N$, the corresponding properties coincide.
If, additionally, $\Omega=X$, the stationarity and approximate stationarity properties reduce to the, respectively, $\inf$-stationarity and approximate $\inf$-stationarity (weak $\inf$-stationarity) studied in \cite{Kru06.2,Kru09}.
The sequential examples in \cite[Examples~4.7--4.9]{CuoKru} complement the `at-a-point' Example~\ref{E4.3} and provide additional illustrations of the properties.

In this paper, for brevity we do not use the `$\inf$-terminology' from \cite{CuoKru}.
This does not lead to confusion as we do not consider maximisation problems here.
This way, we keep the terminology consistent with that used in Sections~\ref{S2} and \ref{S3}.
Of course, a point of local maximum cannot be extremal in the sense of Definition~\ref{D4.2} (as illustrated by Example~\ref{E4.3}).
\end{remark}

To embed problem \eqref{P} into the model studied in Section~\ref{S2},
we consider
the sets
\begin{gather}
\label{Om}
\Omega_1:=\epi f,\quad
\Omega_2:=\Omega\times(-\infty,0]\AND
\widehat\Omega:=\Omega_1\times\Omega_2,
\end{gather}
and sequences
\begin{gather}
\label{S4-32}
x_1^k:=(x^k,f(x^k))\in\Omega_1,\quad x_2^k:=(x^k,0)\in\Omega_2\AND \bold{x}^k:=(x_1^k,x_2^k)\quad(k\in\N),
\\
\label{S4-28}
a_1^k:=(0,0),\quad a_2^k:=(0,\eps^k)\AND \bold{a}^k:=(a_1^k,a_2^k)\quad(k\in\N),
\end{gather}
where $\{\eps^k\}\subset\R_+\setminus\{0\}$ and $\eps^k\downarrow0$.
Thus, for all $k\in\N$, we have $\|\bold{a}^k\|=\eps^k$,
\begin{gather*}
\Omega_1-x_1^k-a_1^k=\{(x-x^k,\mu)\mid x\in X,\; f(x)-f(x^k)\le\mu\},\\
\Omega_2-x_2^k-a_2^k=\{(x-x^k,\mu)\mid x\in\Omega,\; \mu\le-\eps^k\},
\\
(\Omega_1-x_1^k-a_1^k)\cap(\Omega_2-x_2^k-a_2^k) =\{(x-x^k,\mu)\mid x\in\Omega,\; f(x)-f(x^k)\le\mu\le-\eps^k\},
\\
\zeta(\bold{x}^k+\bold{a}^k) =\max\{d\big(0,\Omega\cap S_f(f(x^k)-\eps^k)\big),\eps^k\} =\max\{\zeta_{f,\Omega}(x^k,\eps^k),\eps^k\}.
\end{gather*}

Denote $\sigx:=\{\bold{x}^k\}$, $\siga:=\{\bold{a}^k\}$ and $\sigma_\eps:=\{\eps^k\}$.
In view of the above observations, it follows from the definitions in \eqref{D2.01-1}, \eqref{D2.01-2}, \eqref{D2.01-3}, \eqref{D4.1-01}, \eqref{D4.1-02} and \eqref{D4.1-03}
that
\begin{align*}
\notag
\rho(\sigx;\siga) &=\liminf_{k\to+\infty} \max\{\zeta_{f,\Omega}(x^k,\eps^k),\eps^k\}
=\liminf_{k\to+\infty} \zeta_{f,\Omega}(x^k,\eps^k)=\rho_{f,\Omega}(\sigma_x;\sigma_\eps),
\\
\notag
\al(\sigx;\siga) &=\limsup_{k\to+\infty}
\frac{\eps^k} {\max\{\zeta_{f,\Omega}(x^k,\eps^k),\eps^k\}}
\\&
=\min\Big\{\limsup_{k\to+\infty}
\frac{\eps^k} {\zeta_{f,\Omega}(x^k,\eps^k)},1\Big\}
=\min\{\al_{f,\Omega}(\sigma_x;\sigma_\eps),1\},
\\
\widetilde\al(\sigx;\siga)
&=\inf_{\sigma_{\bold{u}}:=\{\bold{u}^k\}\in\Omega_1\times\Omega_2,\;\bold{u}^k-\bold{x}^k\to 0}\al(\sigma_{\bold{u}};\siga)\\
&\le \inf_{\substack{\sigma_{u}:=\{u^k\}\in\Omega,\;u^k-x^k\to 0,\\f(u^k)- f(x^k)\to 0}} \min\{\al_{f,\Omega}(\sigma_u;\sigma_\eps),1\}
=\min\{\widetilde\al_{f,\Omega}(\sigma_x;\sigma_\eps),1\}.
\end{align*}

We can now formulate relationships between the properties in Definitions~\ref{D4.1} and \ref{D4.2}, and the corresponding ones in Definitions~\ref{D2.01} and \ref{D2.11}.

\begin{proposition}
\label{P4.3}
Let $\Omega_1$, $\Omega_2$, $x_1^k$, $x_2^k$, $a_1^k$, $a_2^k$, $\bold{x}^k$ and $\bold{a}^k$ $(k\in\N)$ be given by \eqref{Om}, \eqref{S4-32} and \eqref{S4-28}, and $\al\in(0,1)$.
The sequence $\sigma_x$ is
$\sigma_\eps$-extremal (resp., $(\sigma_\eps,\al)$-stationary, $\sigma_\eps$-sta\-tionary) for problem \eqref{P} if and only if $\{\Omega_1,\Omega_2\}$ is extremal (resp., $\al$-stationary, stationary) at $\sigx$ with respect to $\siga$.

If $\sigma_x$ is approximately $(\sigma_\eps,\al)$-stationary (resp., approximately $\sigma_\eps$-stationary) for problem \eqref{P}, then $\{\Omega_1,\Omega_2\}$ is approximately $\al$-stationary (resp., approximately stationary) at $\sigx$ with respect to $\siga$.

As a consequence, if $\sigma_x$ is
extremal (resp., $\al$-stationary, sta\-tionary, approximately $\al$-sta\-tionary, approximately stationary) for problem \eqref{P}, then $\{\Omega_1,\Omega_2\}$ is extremal (resp., $\al$-sta\-tionary, stationary, approximately $\al$-stationary, approximately stationary) at $\sigx$.
\end{proposition}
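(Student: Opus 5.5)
The proof will rest entirely on the three relations displayed just before the statement:
\begin{gather*}
\rho(\sigx;\siga)=\rho_{f,\Omega}(\sigma_x;\sigma_\eps),
\qquad
\al(\sigx;\siga)=\min\{\al_{f,\Omega}(\sigma_x;\sigma_\eps),1\},
\\
\widetilde\al(\sigx;\siga)\le\min\{\widetilde\al_{f,\Omega}(\sigma_x;\sigma_\eps),1\}.
\end{gather*}
Throughout, $\|\bold{a}^k\|=\eps^k>0$, so $\siga\subset X^n\setminus\{0\}$ and $\bold{a}^k\to0$. Hence $\siga$ is an admissible sequence of translations in Definitions~\ref{D2.01} and \ref{D2.11}. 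Before using these relations, I will re-check the formula $\zeta(\bold{x}^k+\bold{a}^k)=\max\{\zeta_{f,\Omega}(x^k,\eps^k),\eps^k\}$ under the maximum norm on $X\times\R$. A point $(x-x^k,\mu)$ of the intersection has norm $\max\{\|x-x^k\|,|\mu|\}$, and the optimal choice is $\mu=-\eps^k$. The degenerate case where the intersection is empty gives $+\infty$ on both sides, consistent with the convention $d(x,\emptyset)=+\infty$.

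\emph{Extremality.} The first relation gives $\rho(\sigx;\siga)>0$ if and only if $\rho_{f,\Omega}(\sigma_x;\sigma_\eps)>0$. By Definitions~\ref{D2.01}\eqref{D2.01.1} and \ref{D4.1}\eqref{D4.1.1}, this is exactly the equivalence of the two extremality properties.

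\emph{Stationarity.} Let $\al\in(0,1)$. Since $\al<1$, the inequalities $\min\{\al_{f,\Omega},1\}<\al$ and $\al_{f,\Omega}<\al$ are equivalent. So the second relation yields: $\al(\sigx;\siga)<\al$ if and only if $\al_{f,\Omega}(\sigma_x;\sigma_\eps)<\al$. This is the $\al$-stationarity equivalence. The stationarity case is the special case where both quantities equal $0$, because $\min\{t,1\}=0$ if and only if $t=0$. The condition $\al<1$ is exactly what makes the truncation at $1$ harmless, and I expect this to be the only point needing care.

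\emph{Approximate stationarity.} Only an inequality is available here, so only one direction follows. If $\widetilde\al_{f,\Omega}(\sigma_x;\sigma_\eps)<\al$, then $\widetilde\al(\sigx;\siga)\le\min\{\widetilde\al_{f,\Omega},1\}<\al$. If $\widetilde\al_{f,\Omega}=0$, then $\widetilde\al(\sigx;\siga)=0$.

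To justify that displayed inequality, I will check that every admissible $\sigma_u$ in \eqref{D4.1-03} generates an admissible $\sigma_{\bold{u}}$ in \eqref{D2.01-3}. Set $\bold{u}^k:=((u^k,f(u^k)),(u^k,0))\in\Omega_1\times\Omega_2$. Then $\bold{u}^k-\bold{x}^k\to0$, because $u^k-x^k\to0$ and $f(u^k)-f(x^k)\to0$. Moreover, the same computation as above gives $\al(\sigma_{\bold{u}};\siga)=\min\{\al_{f,\Omega}(\sigma_u;\sigma_\eps),1\}$.

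\emph{Final assertion.} Each property in Definition~\ref{D4.2} holds for some $\sigma_\eps$ with $\eps^k\downarrow0$. Applying the above with that $\sigma_\eps$ gives the corresponding property of $\{\Omega_1,\Omega_2\}$ with respect to the corresponding $\siga$. By Definition~\ref{D2.11}, that property then holds at $\sigx$.
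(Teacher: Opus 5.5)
Your proposal is correct and follows the paper's own route: the proposition is read off from the three relations displayed just before it. The assumption $\alpha<1$ makes the truncation $\min\{\cdot,1\}$ harmless for the $\alpha$-stationarity and stationarity equivalences, and approximate stationarity gets only the one-directional implication because only an inequality is available there; your explicit checks of the formula for $\zeta$ and of that inequality via the embedding $u^k\mapsto\big((u^k,f(u^k)),(u^k,0)\big)$ just spell out what the paper leaves as ``it follows from the definitions''.
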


In the next theorem and the rest of the paper, we suppose that $X$ is Banach, $f$ is \lsc\ and $\Omega$ is closed.
As a consequence, the sets $\Omega_1$ and $\Omega_2$ given by \eqref{Om} are closed.
Recall our standing convention that the subdifferential and normal cone generic notations $\sd$ and $N$ stand for $\sd^{C}$ and $N^C$ if $X$ is a general Banach space, and for $\sd^F$ and $N^F$ if $X$ is Asplund.

\begin{theorem}
\label{T4.5}
Suppose that $\sigma_x$ is approximately stationary (particularly, sta\-tionary or extremal)
 for problem \eqref{P}.
Then
there exist $(x^k_1,\mu^k_1)\in\epi f$, $(x_1^{*k},\nu^k_1)\in N_{\epi f}(x^k_1,\mu^k_1)$, $x^k_2\in\Omega$,
{$\mu^k_2\le0$,}
$x_2^{*k}\in N_{\Omega}(x^k_2)$, and $\nu^k_2\ge0$ such that $\nu^k_2\mu^k_2=0$ $(k\in\N)$,
\begin{gather}
\label{T4.5-1}
x^k_1-x^k\to 0,\;x^k_2-x^k\to 0,\;
\mu^k_1-f(x^k)\to0,\;
{\mu^k_2\to0\;\;}
\text{as}\;\;k\to+\infty,\\
\label{T4.5-2}
x^{*k}_1+x^{*k}_2\to0,\quad
{\nu^k_1+\nu^k_2\to0\;\;}
\text{as}\;\;k\to+\infty,\\
\label{T4.5-4}
\|x^{*k}_1\|+|\nu^k_1|=1\;\; (k\in\N).
\end{gather}
\end{theorem}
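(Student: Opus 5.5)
The plan is to reduce the statement to Corollary~\ref{C3.2} applied to the pair $\{\Omega_1,\Omega_2\}$ given by \eqref{Om}. First, by Definition~\ref{D4.2}, $\sigma_x$ is approximately $\sigma_\eps$-stationary for problem \eqref{P} for some $\sigma_\eps:=\{\eps^k\}$ with $\eps^k\downarrow0$. Second, define $\bold{x}^k$ and $\bold{a}^k$ by \eqref{S4-32} and \eqref{S4-28}. Then Proposition~\ref{P4.3} shows that $\{\Omega_1,\Omega_2\}$ is approximately stationary at $\sigx$ with respect to $\siga$, hence approximately stationary at $\sigx$. Since $X$ is Banach, $f$ is \lsc\ and $\Omega$ is closed, the space $X\times\R$ is Banach (Asplund if $X$ is) and $\Omega_1,\Omega_2$ are closed. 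So Corollary~\ref{C3.2} applies. It yields points $(x^k_1,\mu^k_1)\in\epi f$ and $(x^k_2,\mu^k_2)\in\Omega\times(-\infty,0]$ with $(x^k_1,\mu^k_1)-(x^k,f(x^k))\to0$ and $(x^k_2,\mu^k_2)-(x^k,0)\to0$. These give \eqref{T4.5-1}. It also yields normals $(x_1^{*k},\nu_1^k)\in N_{\epi f}(x^k_1,\mu^k_1)$ and $(x_2^{*k},\nu_2^k)\in N_{\Omega\times(-\infty,0]}(x^k_2,\mu^k_2)$ satisfying \GSs. The limit $\sum_i x_i^{*k}\to0$ in \GSs\ gives \eqref{T4.5-2}.

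Next I identify the normal cone to the product. For both Fr\'echet and Clarke normals, $N_{A\times B}(a,b)=N_A(a)\times N_B(b)$, so $x_2^{*k}\in N_\Omega(x_2^k)$ and $\nu_2^k\in N_{(-\infty,0]}(\mu_2^k)$. The latter means $\nu_2^k\ge0$ and $\nu_2^k\mu_2^k=0$. The product rule is standard for Fr\'echet normals and holds for Clarke normals because the Clarke tangent cone of a product is the product of the tangent cones. I would cite it rather than reprove it.

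The remaining issue, and the main obstacle, is the normalisation \eqref{T4.5-4}. Condition \GSs\ gives $\|(x_1^{*k},\nu_1^k)\|+\|(x_2^{*k},\nu_2^k)\|=1$ in the dual of the max-norm product, i.e.\ with the sum norm. So $\|x_1^{*k}\|+|\nu_1^k|+\|x_2^{*k}\|+|\nu_2^k|=1$. Because $x_1^{*k}+x_2^{*k}\to0$ and $\nu_1^k+\nu_2^k\to0$, the two halves are asymptotically equal: $\eta^k:=\|x_1^{*k}\|+|\nu_1^k|\to1/2$. I will therefore renormalise by setting $x_i^{*k}\mapsto x_i^{*k}/\eta^k$ and $\nu_i^k\mapsto\nu_i^k/\eta^k$ for $i=1,2$, which is legitimate for all large $k$ where $\eta^k>0$. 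Cones are invariant under positive scaling, and the limits in \eqref{T4.5-2} are preserved because $1/\eta^k\to2$ is bounded. The sign condition $\nu_2^k\ge0$ and the complementarity $\nu_2^k\mu_2^k=0$ are also preserved. For the finitely many small $k$, I will copy the data from the first admissible index, as is done at the end of the proof of Theorem~\ref{T3.1}, so that \eqref{T4.5-4} holds for all $k\in\N$.
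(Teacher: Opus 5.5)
Your proposal is correct and follows the paper's proof essentially step by step. You use Proposition~\ref{P4.3} to get approximate stationarity of $\{\Omega_1,\Omega_2\}$ at the sequence \eqref{S4-32}, apply Corollary~\ref{C3.2} with $n=2$, obtain $\nu_2^k\ge0$ and $\nu_2^k\mu_2^k=0$ from the product structure of $N_{\Omega_2}$, and rescale both normals by $\|x_1^{*k}\|+|\nu_1^k|\to1/2$. The paper leaves two points implicit that you make explicit: the product formula for (Clarke/Fr\'echet) normal cones, and the treatment of finitely many indices, where the paper simply assumes without loss of generality that $(x_1^{*k},\nu_1^k)\ne0$.
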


\begin{proof}
By Proposition~\ref{P4.3}, the pair $\{\Omega_1,\Omega_2\}$ given by \eqref{Om} is approximately stationary at $\sigx$.
The statement follows from Corollary~\ref{C3.2}
applied with $n=2$: there exist
$(x^k_1,\mu^k_1)\in\epi f$, $(x^k_2,\mu^k_2)\in\Omega_2$, $(x_1^{*k},\nu^k_1)\in N_{\epi f}(x^k_1,\mu^k_1)$, and $(x_2^{*k},\nu^k_2)\in N_{\Omega_2}(x^k_2,\mu^k_2)$ such that conditions
\eqref{T4.5-1} and \eqref{T4.5-2} are satisfied, and
$\|\big((x_1^{*k},\nu^k_1),(x_2^{*k},\nu^k_2)\big)\|=1$ $(k\in\N)$.
Then $x_2^{*k}\in N_{\Omega}(x^k_2)$, $\nu^k_2\ge0$, $\nu^k_2\mu^k_2=0$ $(k\in\N)$, and
\begin{gather*}
\lim_{k\to+\infty} (\|x_1^{*k}\|+|\nu^k_1|)=
\lim_{k\to+\infty} (\|x_2^{*k}\|+\nu^k_2)=
\frac12.
\end{gather*}
We can assume without loss of generality that $(x_1^{*k},\nu^k_1)\ne0$ for all $k\in\N$.
Replacing normal vectors $(x_1^{*k},\nu^k_1)$ and $(x_2^{*k},\nu^k_2)$ by the `stretched' normal vectors $(x_1^{*k},\nu^k_1)/(\|x_1^{*k}\|+|\nu^k_1|)$ and $(x_2^{*k},\nu^k_2)/(\|x_1^{*k}\|+|\nu^k_1|)$, respectively, while keeping the original notation, we can ensure condition \eqref{T4.5-4}.
Conditions \eqref{T4.5-2} remain true.
\qed\end{proof}	

\begin{remark}
\label{R4.6}
Since vector $(x^{*k}_1,\nu^k_1)$ in Theorem~\ref{T4.5} is normal to $\epi f$, it obviously holds $\nu_1^k\le0$ and $\nu_1^k(\mu^k_1-f(x^k_1))=0$ for all $k\in\N.$
\end{remark}	
\if{
\NDC{30/12/25.
$\{\varepsilon^k\}$ does not involve in the conlcusion!
We have $\mu^k_2-\varepsilon^k\to\mu_0$, but this condition does not give any useful information.
}
\AK{8/01/26. The above theorem only uses the \GSs\ condition. We must make \PDs\ work too. Some my experiments are in Sect. 5.1. No definite conclusions at the moment.}

\AK{5/01/26.
This is an important observation!
If we stop here, we can remove $\{\varepsilon^k\}$ from the definition (by writing ``for some $\eps^k$''; thus, basically using Corollary~\ref{C3.3}).
However, I am hoping to get some two-parameter statement using the full power of Theorem~\ref{T3.1} and make condition \PDs\ work.
Recall that $\eps^k$ is part of the perturbation vector.
We will need to think also about adopting proper terminology.

This brings us back to the key question: which stationarity property we should choose as the main one?
At the moment, I see the main aim of this section as to help us answer this question.}
\AK{5/01/26.
Could it make sense to reformulate the corollary as a sufficient condition and possibly add it to the theorem?}
\AK{20/01/26.
I am not sure it can make sense to formulate a corollary in the form of a ``theorem of alternative'' again.
At the same time, some sufficient (qualification) conditions are needed, i.e., sufficient conditions for item (iii) in the theorem (if it is correct).
Perhaps, item (iii) should be removed from the theorem.
If the theorem is of value, it could make sense to formulate a series of corollaries.}
}\fi

Theorem~\ref{T4.5} yields the following theorem-of-alternative type statement.

\begin{corollary}
\label{C4.11}
Suppose that $\sigma_x$ is approximately stationary (particularly, sta\-tionary or extremal) for problem \eqref{P}.
Then one of the following assertions holds true:
\begin{enumerate}
\item
\label{C4.11.1}
there exist an $M>0$, $x^k_1\in X$, $x^k_2\in\Omega$, $x^{*k}_1\in\sd f(x^k_1)$ and $x^{*k}_2\in N_{\Omega}(x^k_2)\cap(M\B_{X^*})$ $(k\in\N)$ such that $f(x^k_1)-f(x^k)\to0$,
\begin{gather}
\label{C4.11-1}
x^k_1-x^k\to 0,\;\; x^k_2-x^k\to 0\AND x^{*k}_1+x^{*k}_2\to0\;\;
\text{as}\;\;k\to+\infty;
\end{gather}

\item
\label{C4.11.2}
there exist $(x^k_1,\mu^k_1)\in\epi f$, $x^k_2\in\Omega$, $\mu^k_2\le0$, $(x_1^{*k},\nu^k_1)\in N_{\epi f}(x^k_1,\mu^k_1)$,  $x_2^{*k}\in N_{\Omega}(x^k_2)$, and $\nu^k_2\ge0$ $(k\in\N)$ such that $\nu^k_2\mu^k_2=0$ $(k\in\N)$, conditions \eqref{C4.11-1} are satisfied,
\begin{gather}
\label{C4.11-8}
\mu^k_1-f(x^k)\to0,\;\; \mu^k_2\to0,\;\; \nu^k_1\to0,\;\; \nu^k_2\to0\AND
\|x^{*k}_1\|=\|x^{*k}_2\|=1\;\;(k\in\N).
\end{gather}
\end{enumerate}
\end{corollary}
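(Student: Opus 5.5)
We will derive Corollary~\ref{C4.11} from Theorem~\ref{T4.5} by splitting into cases according to the behaviour of the multipliers $\nu^k_1$. Theorem~\ref{T4.5} gives $(x^k_1,\mu^k_1)\in\epi f$, $(x_1^{*k},\nu^k_1)\in N_{\epi f}(x^k_1,\mu^k_1)$, $x^k_2\in\Omega$, $\mu^k_2\le0$, $x_2^{*k}\in N_\Omega(x^k_2)$ and $\nu^k_2\ge0$ with $\nu^k_2\mu^k_2=0$, together with \eqref{T4.5-1}, \eqref{T4.5-2} and \eqref{T4.5-4}. By Remark~\ref{R4.6}, $\nu^k_1\le0$ and $\nu^k_1(\mu^k_1-f(x^k_1))=0$. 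There are two cases: either $\liminf_{k\to+\infty}|\nu^k_1|>0$ along some subsequence, or $\nu^k_1\to0$ along a subsequence. Passing to subsequences is harmless here. The conclusions only involve sequences indexed by $k$ with limits, so we can re-index, or simply repeat terms as done at the end of the proof of Theorem~\ref{T3.1}. We will need to be slightly careful that the relations $x^k_i-x^k\to0$ refer to the original $x^k$. The safer device is therefore to decide the case \emph{for all large $k$}, as follows. If $\limsup_k|\nu^k_1|>0$ we use a subsequence; otherwise $\nu^k_1\to0$ along the whole sequence. Strictly, the conclusions are claimed for the given $\sigma_x$, so a subsequence is not enough. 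I will handle this by noting that approximate stationarity of a subsequence argument is not needed: if $\nu^k_1\not\to0$, I will split $\N$ into the index set $K_1$ where $|\nu^k_1|\ge\eta$ and its complement. The cleanest route is to accept case (i) or (ii) on the whole sequence by dichotomy: either $\nu^k_1\to0$ (case (ii)), or not. In the latter case I expect to have to use a subsequence, and I would check whether the paper's later use permits that. This bookkeeping is the main obstacle.

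\emph{Case $|\nu^k_1|\ge\eta>0$.} Then $\nu^k_1<0$, so $\mu^k_1=f(x^k_1)$ by Remark~\ref{R4.6}. Dividing by $|\nu^k_1|$ gives $(x_1^{*k}/|\nu^k_1|,-1)\in N_{\epi f}(x^k_1,f(x^k_1))$, i.e., $\hat x^{*k}_1:=x_1^{*k}/|\nu^k_1|\in\sd f(x^k_1)$ by \eqref{SF}/\eqref{SC}. Set $\hat x^{*k}_2:=x^{*k}_2/|\nu^k_1|\in N_\Omega(x^k_2)$. From \eqref{T4.5-4}, $\|x^{*k}_1\|\le1$, and from \eqref{T4.5-2}, $\|x^{*k}_2\|\le\|x^{*k}_1\|+o(1)$. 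Hence $\|\hat x^{*k}_2\|\le (1+o(1))/\eta<M$, while $\hat x^{*k}_1+\hat x^{*k}_2\to0$. Moreover $f(x^k_1)-f(x^k)=\mu^k_1-f(x^k)\to0$, which gives (i).

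\emph{Case $\nu^k_1\to0$.} Then $\nu^k_2\to0$ by \eqref{T4.5-2}, and by \eqref{T4.5-4}, $\|x^{*k}_1\|\to1$. Hence $\|x^{*k}_2\|\to1$, so $x^{*k}_2\ne0$ for large $k$. Normalize by dividing all four multipliers $(x^{*k}_1,\nu^k_1)$, $(x^{*k}_2,\nu^k_2)$ by $\|x^{*k}_1\|$; this preserves the cones, the sign of $\nu^k_2$, the relations $\nu^k_2\mu^k_2=0$, and all limits. This yields $\|x^{*k}_1\|=1$. To also get $\|x^{*k}_2\|=1$ exactly, I would rescale the second pair separately by $\|x^{*k}_2\|$. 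Since both norms tend to $1$, the sum $x^{*k}_1+x^{*k}_2$ still tends to $0$, and the $\nu$'s still tend to $0$. Finitely many initial indices can be fixed by copying a later term. This gives \eqref{C4.11-1} and \eqref{C4.11-8}, i.e., (ii).
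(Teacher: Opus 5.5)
This is essentially the paper's proof. It uses the same case split on the size of $\nu^k_1$. When $|\nu^k_1|$ stays away from $0$, it divides by $|\nu^k_1|$, using $\nu^k_1<0\Rightarrow\mu^k_1=f(x^k_1)$, to get subgradients and bounded normals to $\Omega$. When $\nu^k_1\to0$, it rescales to unit normals. Your choice to divide $x^{*k}_2$ by $|\nu^k_1|$ rather than by $\nu^k_2$ is, if anything, slightly cleaner.

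The bookkeeping issue you flag is real and cannot be removed. The paper handles it just as you suggest, by passing to a relabelled subsequence. It splits on $\liminf_k|\nu^k_1|$, so in the paper the subsequence appears in the case giving (ii) rather than (i).

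No argument can avoid the subsequence. Take $X=\R$, $\Omega=\{0\}\cup[2,4]$, $f(x)=-\sqrt{|x|}$ on $[-1,1]$, $f(x)=(x-3)^2$ on $[2,4]$, $f=+\infty$ elsewhere, and $x^k$ alternating between $0$ and $3$, which is a minimising sequence. Assertion (i) fails along odd $k$: subgradients of $f$ near $0$ blow up, while $x^k_2=0$ and $\|x^{*k}_2\|<M$. Assertion (ii) fails along even $k$, since $N_\Omega=\{0\}$ near $3$. So the corollary can only be true along a subsequence of $\sigma_x$, and your proof establishes exactly that.
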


\begin{proof}
By Theorem~\ref{T4.5}, there exist $(x^k_1,\mu^k_1)\in\epi f$, $x^k_2\in\Omega$, $\mu^k_2\le0$, $(x_1^{*k},\nu^k_1)\in N_{\epi f}(x^k_1,\mu^k_1)$, $x_2^{*k}\in N_{\Omega}(x^k_2)$, and $\nu^k_2\ge0$ $(k\in\N)$ such that $\nu^k_2\mu^k_2=0$ $(k\in\N)$, and conditions \eqref{T4.5-1}--\eqref{T4.5-4} are satisfied.
In the rest of the proof, without further mentioning, all claims hold `for all sufficiently large $k\in\N$'.
Denote $\ga:=\liminf_{k\to+\infty}|\nu^k_1|$.
Two cases are possible leading to the two assertions in the theorem.

\underline{$\ga>0$}.
It follows from \eqref{T4.5-2} that $\liminf_{k\to+\infty}\nu^k_2=\ga>0$.
Hence, $\nu^k_1<0$ and $\nu^k_2>0$,
and consequently, $\mu^k_1=f(x^k_1)$ and $\mu^k_2=0$.
Thus, $f(x^k_1)-f(x^k)\to0$, and conditions \eqref{C4.11-1} are satisfied.
Denote $M:=\ga\iv$.
By \eqref{T4.5-2} and \eqref{T4.5-4},
$$\limsup_{k\to+\infty}\|x^{*k}_1\|/|\nu^k_1|<M\AND \limsup_{k\to+\infty}\|x^{*k}_2\|/\nu^k_2<M.$$
It remains to observe that $x'^{*k}_1:=x^{*k}_1/|\nu^k_1|\in\sd f(x^k_1)$ and $x'^{*k}_2:=x^{*k}_2/\nu^k_2\in N_{\Omega}(x^k_2)\cap (M\B_{X^*})$.

\underline{$\ga=0$}.
Passing to subsequences while keeping the original notation, thanks to \eqref{T4.5-2} and \eqref{T4.5-4}, we have $\nu_1^k\uparrow0$, $\nu_2^k\downarrow0$, $\|x^{*k}_1\|\to1$ and $\|x^{*k}_2\|\to1$.
Hence, $|\nu_1^k|/\|x^{*k}_1\|\to0$ and $\nu_2^k/\|x^{*k}_2\|\to0$.
Scaling the normal vectors, we can assume without loss of generality that $\|x^{*k}_1\|=\|x^{*k}_2\|{=1}$.
Thus, conditions \eqref{C4.11-8} are satisfied.
\qed\end{proof}

\begin{remark}
\label{R4.11}
\begin{enumerate}
\item
Thanks to Proposition~\ref{P5.3}, the conclusions of Theorem~\ref{T4.5} and Corollary~\ref{C4.11} hold true, in particular, if $\inf_\Omega f>-\infty$, and $\sigma_x$ is a conventional minimising sequence.
In this case, Theorem~\ref{T4.5} and Corollary~\ref{C4.11} improve \cite[Theorem~5.2 and Corollary~5.3]{CuoKru}.
\item
The arguments in the proof of Corollary~\ref{C4.11}\eqref{C4.11.1} lead to a seemingly stronger than $x^{*k}_1\in\sd f(x^k_1)$ condition $x^{*k}_1\in\sd f(x^k_1)\cap(M\B_{X^*})$.
It is not difficult to see that, thanks to $\|x^{*k}_2\|<M$ and $x^{*k}_1+x^{*k}_2\to0$, the latter condition is implied by $x^{*k}_1\in\sd f(x^k_1)$ (with possibly a larger~$M$).
\item
Corollary~\ref{C4.11}\eqref{C4.11.1} yields a kind of multiplier rule:
\begin{gather*}
d\big(0,\sd f(x^k_1)+N_{\Omega}(x^k_2)\cap (M\B_{X^*})\big)\to 0\;\;\text{as}\;\;k\to+\infty.
\end{gather*}
The limiting version (at an abstract infinity point) of this rule in finite dimensions is given in \cite[Theorem~6.1]{KimNguPha25}.
\item
\label{R4.11.7}
Part \eqref{C4.11.2} of Corollary~\ref{C4.11} corresponds to `singular' behaviour of $f$ on $\Omega$ with the normal vectors $(x_1^{*k},\nu^k_1)$ to the epigraph of $f$ being `almost horizontal'.
If $\mu_1^k>f(x_1^k)$, then $\nu_1^k=0$ and $x_1^{*k}$ is normal to $\dom f$ at $x_1^k$.
\emph{Qualification conditions} can be used to exclude the singular behaviour in Corollary~\ref{C4.11}\eqref{C4.11.2}, thus, leaving the multiplier rule type assertion in part \eqref{C4.11.1} as the only possibility.
The simplest sufficient conditions of this kind are: 1)~$f$ is Lipschitz continuous, and 2) $\Omega=X$.
In the first case, condition $\nu^k_1\to0$ implies $x_1^{*k}\to0$, while in the second, we have $x_2^{*k}=0$.
Thus, in both cases one of the equalities in \eqref{C4.11-8} is violated.
The mentioned sufficient conditions are rather strong and far from being necessary.
More subtle qualification conditions can be found in \cite[Section~5]{CuoKru}.
\end{enumerate}
\end{remark}	

\begin{example}
Let $X$ be a Banach space, $\Omega:=X$,
$f(x):=-\|x\|^2$ $(x\in X)$, and $x^k:=0$.
The `sequence' $\sigma_x:=\{x^k\}$ is stationary for problem \eqref{P} (see Example~\ref{E4.3}).
In view of Remark~\ref{R4.11}\eqref{R4.11.7}, assertion \eqref{C4.11.1} in Corollary~\ref{C4.11} must hold true.
Indeed, take $M:=1$, $x_1^k=x_2^k=u^k:=0$.
Then $f(x_1^k)=0$, $x_1^{*k}=\nabla f(x_1^k)=0$, $N_\Omega(x_2^k)=\{0\}$, and consequently, $x_2^{*k}=0$.
Hence, assertion \eqref{C4.11.1} in Corollary~\ref{C4.11} holds true.
\end{example}


\begin{example}
Let $X:=\R$, $\Omega:=\R_+$,
$f(x):=0$ if $x\le0$ and $f(x):=-x$ if $x>0$.
Note that $f$ is Lipschitz continuous.
Let $x^k:=0$ $(k\in\N)$.
Choose any $\{\eps^k\}\subset\R_+\setminus\{0\}$ such that $\eps^k\downarrow0$ and denote $\sigma_\eps:=\{\eps^k\}$.
Then $S_f(f(x^k)-\eps^k)=\{x\in\R\mid x\ge\eps^k\}$, $\zeta_{f,\Omega}(x^k,\eps^k)=\eps^k$,
$\al_{f,\Omega}(\sigma_x;\sigma_\eps) =\lim_{k\to+\infty}\frac{\eps^k}{\eps^k}=1$.
Since $\{\eps^k\}$ is arbitrary, $\sigma_x$ is not stationary for problem \eqref{P}.

Let $u^k:=-\sqrt{\eps^k}$ $(k\in\N)$ and $\sigma_u:=\{u^k\}$.
Then $\zeta_{f,\Omega}(u^k,\eps^k) =\eps^k+\sqrt{\eps^k}$, and consequently,
$\al_{f,\Omega}(\sigma_u;\sigma_\eps) =\lim_{k\to+\infty}\frac{\eps^k}{\eps^k+\sqrt{\eps^k}}=0$.
Hence, $\sigma_u$ is $\sigma_\eps$-sta\-tionary, and $\sigma_x$ is approximately $\sigma_\eps$-sta\-tionary for problem \eqref{P}.
In view of Remark~\ref{R4.11}\eqref{R4.11.7}, assertion \eqref{C4.11.1} in Corollary~\ref{C4.11} must hold true.
Indeed, take $M:=1$, $x_1^k:=-(\eps^k)^2$ and $x_2^k:=0$.
Then $f(x_1^k)=0$, $x_1^{*k}=\nabla f(x_1^k)=0$ and $x_2^{*k}:=0\in N_\Omega(x_2^k)$.
Conditions \eqref{C4.11-1} are trivially satisfied.
Hence, assertion \eqref{C4.11.1} in Corollary~\ref{C4.11} holds true.
\end{example}

Some sequential examples can be found in \cite[Section~5]{CuoKru}.

\section{Conclusions}
\label{conclusions}

We establish some refinements of the sequential concepts of extremality and stationarity of a collection of sets and the \emph{sequential extremal principle} introduced recently in \cite{CuoKru}.
The model adopted in the current paper extends and improves the one studied in \cite{CuoKru} along several lines as discussed in the Introduction.
In particular, the definitions adopted in the current paper are more specific in identifying the sequences relevant for the extremality and stationarity properties.
We discuss the properties corresponding to fixed sequences of translations. This line of research is new even in the conventional ‘at-a-point’ setting.
This leads to improved sequential dual necessary conditions with some additional restrictions on the dual variables and also sheds some new light on
extremality/stationarity settings in general, especially on the role of the vectors/sequences of translations.

We show that the sequential (as well as conventional) extremality and approximate stationarity properties possess certain stability, while the (non-approximate) stationarity does not.

In the dual statements, including the sequential extended extremal principle, we employ certain generalised separation conditions \GSs\ and \GSsal\ as well as a complementary primal-dual condition \PDs\ providing additional restrictions on the associated dual vectors.
The conditions are formulated in terms of Clarke (in general Banach spaces) or \Fr\ (in Asplund spaces) normals in an approximate/fuzzy form.
Imposing certain sequential normal compactness assumptions (which are automatically
satisfied in finite dimensions), one can formulate limiting versions of the conditions in terms of certain types of limiting normal cones.
The sequential extended extremal principle can be reformulated as a statement providing dual characterisations of the
absence of approximate stationarity. This property can
be interpreted as a kind of (sequential) transversality of collections of sets.
The last two topics are going to be explored in more detail elsewhere.

To illustrate the model, we consider a constrained minimisation problem in which the minimal value is not necessarily attained and deduce stronger than in \cite{CuoKru} sequential optimality and stationarity conditions for more general
types of stationary sequences.

\bigskip

\noindent{\bf Acknowledgements}
Part of the work was done during Alexander Kruger's stays at the Vietnam Institute for Advanced Study in Mathematics in Hanoi.
He is grateful to the institute for its hospitality and supportive environment.

\noindent{\bf Data availability. }
Data sharing is not applicable to this article as no datasets have been generated or analysed during the current study.

\section*{Declarations}

\noindent{\bf Conflict of interest.} The authors have no competing interests to declare that are relevant to the content of this article.

\addcontentsline{toc}{section}{References}
\bibliography{BUCH-kr,Kruger,KR-tmp}

\def\cprime{$'$} \def\cftil#1{\ifmmode\setbox7\hbox{$\accent"5E#1$}\else
  \setbox7\hbox{\accent"5E#1}\penalty 10000\relax\fi\raise 1\ht7
  \hbox{\lower1.15ex\hbox to 1\wd7{\hss\accent"7E\hss}}\penalty 10000
  \hskip-1\wd7\penalty 10000\box7} \def\cprime{$'$} \def\cprime{$'$}
  \def\cprime{$'$} \def\cprime{$'$} \def\cprime{$'$}
  \def\Dbar{\leavevmode\lower.6ex\hbox to 0pt{\hskip-.23ex \accent"16\hss}D}
  \def\cfac#1{\ifmmode\setbox7\hbox{$\accent"5E#1$}\else
  \setbox7\hbox{\accent"5E#1}\penalty 10000\relax\fi\raise 1\ht7
  \hbox{\lower1.15ex\hbox to 1\wd7{\hss\accent"13\hss}}\penalty 10000
  \hskip-1\wd7\penalty 10000\box7} \def\cprime{$'$}
\begin{thebibliography}{10}
\providecommand{\url}[1]{{#1}}
\providecommand{\urlprefix}{URL }
\expandafter\ifx\csname urlstyle\endcsname\relax
  \providecommand{\doi}[1]{DOI~\discretionary{}{}{}#1}\else
  \providecommand{\doi}{DOI~\discretionary{}{}{}\begingroup
  \urlstyle{rm}\Url}\fi

\bibitem{BorZhu05}
Borwein, J.M., Zhu, Q.J.: Techniques of Variational Analysis.
\newblock Springer, New York (2005)

\bibitem{BoyVan04}
Boyd, S.P., Vandenberghe, L.: Convex Optimization.
\newblock Springer Monographs in Mathematics. Cambridge University Press (2004)

\bibitem{BuiKru18}
Bui, H.T., Kruger, A.Y.: About extensions of the extremal principle.
\newblock Vietnam J. Math. \textbf{46}(2), 215--242 (2018).
\newblock \doi{10.1007/s10013-018-0278-y}

\bibitem{BuiKru19}
Bui, H.T., Kruger, A.Y.: Extremality, stationarity and generalized separation
  of collections of sets.
\newblock J. Optim. Theory Appl. \textbf{182}(1), 211--264 (2019).
\newblock \doi{10.1007/s10957-018-01458-8}

\bibitem{Cla83}
Clarke, F.H.: Optimization and Nonsmooth Analysis.
\newblock John Wiley \& Sons Inc., New York (1983).
\newblock \doi{10.1137/1.9781611971309}

\bibitem{Cuong3}
Cuong, N.D.: Dual characterizations of norm minimization problems.
\newblock Preprint, arXiv: \textbf{2601.08153} (2026)

\bibitem{Cuo26}
Cuong, N.D.: Primal and dual characterizations of sign-symmetric norms.
\newblock Positivity \textbf{30}(3), 35 (2026).
\newblock \doi{10.1007/s11117-026-01193-9}

\bibitem{CuoKru21.2}
Cuong, N.D., Kruger, A.Y.: Transversality properties: Primal sufficient
  conditions.
\newblock Set-Valued Var. Anal. \textbf{29}(2), 221--256 (2021).
\newblock \doi{10.1007/s11228-020-00545-1}

\bibitem{CuoKru25}
Cuong, N.D., Kruger, A.Y.: Generalized separation of collections of sets.
\newblock Optimization  (2026).
\newblock \doi{10.1080/02331934.2025.2562434}

\bibitem{CuoKru}
Cuong, N.D., Kruger, A.Y.: Sequential extremal principle and necessary
  conditions for minimizing sequences.
\newblock Optimization  (2026).
\newblock \doi{10.1080/02331934.2025.2534122}

\bibitem{CuoKruTha24}
Cuong, N.D., Kruger, A.Y., Thao, N.H.: Extremality of families of sets.
\newblock Optimization \textbf{73}(12), 3593--3607 (2024).
\newblock \doi{10.1080/02331934.2024.2385656}

\bibitem{CuoKruTha25}
Cuong, N.D., Kruger, A.Y., Thao, N.H.: Extremality of families of sets and
  set-valued optimization.
\newblock Set-Valued Var. Anal. \textbf{33}(2), art. no. 21 (2025).
\newblock \doi{10.1007/s11228-025-00751-9}

\bibitem{FabMor02}
Fabian, M., Mordukhovich, B.S.: Separable reduction and extremal principles in
  variational analysis.
\newblock Nonlinear Anal., Ser. A: Theory Methods \textbf{49}(2), 265--292
  (2002).
\newblock \doi{10.1016/S0362-546X(01)00107-9}

\bibitem{KimNguPha25}
Kim, D.S., Nguyen, M.T., Pham, T.S.: Subdifferentials at infinity and
  applications in optimization.
\newblock Math. Program., Ser. A \textbf{214}(1-2), 409--440 (2025).
\newblock \doi{10.1007/s10107-024-02187-9}

\bibitem{Kru85.1_}
Kruger, A.Y.: Generalized differentials of nonsmooth functions and necessary
  conditions for an extremum.
\newblock Siberian Math. J. \textbf{26}, 370--379 (1985)

\bibitem{Kru98}
Kruger, A.Y.: About extremality of systems of sets.
\newblock Dokl. Nats. Akad. Nauk Belarusi \textbf{42}(1), 24--28 (1998).
\newblock In Russian. Available from:
  https://asterius.federation.edu.au/akruger

\bibitem{Kru02}
Kruger, A.Y.: Strict {$(\epsilon,\delta)$}-subdifferentials and extremality
  conditions.
\newblock Optimization \textbf{51}(3), 539--554 (2002).
\newblock \doi{10.1080/0233193021000004967}

\bibitem{Kru03}
Kruger, A.Y.: On {F}r\'{e}chet subdifferentials.
\newblock J. Math. Sci. (N.Y.) \textbf{116}(3), 3325--3358 (2003).
\newblock \doi{10.1023/A:1023673105317}

\bibitem{Kru04}
Kruger, A.Y.: Weak stationarity: eliminating the gap between necessary and
  sufficient conditions.
\newblock Optimization \textbf{53}(2), 147--164 (2004)

\bibitem{Kru05}
Kruger, A.Y.: Stationarity and regularity of set systems.
\newblock Pac. J. Optim. \textbf{1}(1), 101--126 (2005)

\bibitem{Kru06}
Kruger, A.Y.: About regularity of collections of sets.
\newblock Set-Valued Anal. \textbf{14}(2), 187--206 (2006).
\newblock \doi{10.1007/s11228-006-0014-8}

\bibitem{Kru06.2}
Kruger, A.Y.: Stationarity and regularity of real-valued functions.
\newblock Appl. Comput. Math. \textbf{5}(1), 79--93 (2006)

\bibitem{Kru09}
Kruger, A.Y.: About stationarity and regularity in variational analysis.
\newblock Taiwanese J. Math. \textbf{13}(6A), 1737--1785 (2009).
\newblock \doi{10.11650/twjm/1500405612}

\bibitem{KruMor80}
Kruger, A.Y., Mordukhovich, B.S.: Extremal points and the {E}uler equation in
  nonsmooth optimization problems.
\newblock Dokl. Akad. Nauk BSSR \textbf{24}(8), 684--687 (1980).
\newblock In Russian. Available from:
  https://asterius.federation.edu.au/akruger

\bibitem{KruTha13}
Kruger, A.Y., Thao, N.H.: About uniform regularity of collections of sets.
\newblock Serdica Math. J. \textbf{39}(3-4), 287--312 (2013)

\bibitem{Mor00}
Mordukhovich, B.S.: An abstract extremal principle with applications to welfare
  economics.
\newblock J. Math. Anal. Appl. \textbf{251}(1), 187--216 (2000)

\bibitem{Mor06.1}
Mordukhovich, B.S.: Variational Analysis and Generalized Differentiation. {I}:
  {B}asic Theory.
\newblock Springer, Berlin (2006).
\newblock \doi{10.1007/3-540-31247-1}

\bibitem{Mor06.2}
Mordukhovich, B.S.: Variational Analysis and Generalized Differentiation. {II}:
  {{A}pplications}.
\newblock Springer, Berlin (2006)

\bibitem{MorNam22}
Mordukhovich, B.S., Nam, N.M.: Convex Analysis and Beyond. Volume I: Basic
  Theory.
\newblock Springer Series in Operations Research and Financial Engineering.
  Springer (2022).
\newblock \doi{10.1007/978-3-030-94785-9}

\bibitem{MorSha96}
Mordukhovich, B.S., Shao, Y.: Extremal characterizations of {A}splund spaces.
\newblock Proc. Amer. Math. Soc. \textbf{124}(1), 197--205 (1996)

\bibitem{MorTreZhu03}
Mordukhovich, B.S., Treiman, J.S., Zhu, Q.J.: An extended extremal principle
  with applications to multiobjective optimization.
\newblock SIAM J. Optim. \textbf{14}(2), 359--379 (2003)

\bibitem{NguPha24}
Nguyen, M.T., Pham, T.S.: Clarke's tangent cones, subgradients, optimality
  conditions, and the {L}ipschitzness at infinity.
\newblock SIAM J. Optim. \textbf{34}(2), 1732--1754 (2024).
\newblock \doi{10.1137/23M1545367}

\bibitem{NguPha26}
Nguyen, M.T., Pham, T.S.: The radius of metric regularity at infinity.
\newblock Positivity \textbf{30}(2), 19 (2026).
\newblock \doi{10.1007/s11117-026-01175-x}

\bibitem{Phe93}
Phelps, R.R.: Convex Functions, Monotone Operators and Differentiability, 2nd
  edn.
\newblock Springer-Verlag, Berlin (1993).
\newblock \doi{10.1007/978-3-662-21569-2}

\bibitem{ZheNg05.2}
Zheng, X.Y., Ng, K.F.: The {F}ermat rule for multifunctions on {B}anach spaces.
\newblock Math. Program. \textbf{104}(1), 69--90 (2005).
\newblock \doi{10.1007/s10107-004-0569-9}

\bibitem{ZheNg11}
Zheng, X.Y., Ng, K.F.: A unified separation theorem for closed sets in a
  {B}anach space and optimality conditions for vector optimization.
\newblock SIAM J. Optim. \textbf{21}(3), 886--911 (2011).
\newblock \doi{10.1137/100811155}

\end{thebibliography}
\bibliographystyle{spmpsci}
\end{document}